\newtheorem{theorem}{Theorem}
\newtheorem{definition}[theorem]{Definition}
\newtheorem{lemma}[theorem]{Lemma}
\newtheorem{proposition}[theorem]{Proposition}
\newtheorem{corollary}[theorem]{Corollary}
\newtheorem{remark}[theorem]{Remark}
\newtheorem{example}[theorem]{Example}
\newcommand{\defeq}{\mathrel{\mathop:}=}
\newcommand{\N}{\mathbb{N}}
\newcommand{\e}{\varepsilon}
\newcommand{\de}{\delta}
\newcommand{\C}{\mathcal C[0,1]}
\newcommand{\G}{\mathcal G}
\renewcommand{\S}{\mathbb S}
\renewcommand{\O}{\mathcal O}
\newcommand{\J}{\mathcal J}
\newcommand{\subalign}[1]{%
  \vcenter{%
    \Let@ \restore@math@cr \default@tag
    \baselineskip\fontdimen10 \scriptfont\tw@
    \advance\baselineskip\fontdimen12 \scriptfont\tw@
    \lineskip\thr@@\fontdimen8 \scriptfont\thr@@
    \lineskiplimit\lineskip
    \ialign{\hfil$\m@th\scriptscriptstyle##$&$\m@th\scriptscriptstyle{}##$\crcr
      #1\crcr
    }%
  }
}
\begin{document}
\title[pseudo-physical measures]{pseudo-physical measures for typical continuous maps of the interval}

\author{Eleonora Catsigeras}

\author{Serge Troubetzkoy}

\address{Instituto de Matem\'atica y Estad\'istica ``Prof.\ Ing.\  Rafael Laguardia'' (IMERL), Universidad de la Rep\'ublica, Av.\ Julio Herrera y Reissig 565, C.P. 11300, Montevideo, Uruguay}
\email{eleonora@fing.edu.uy}
\urladdr{http:/fing.edu.uy/{\lower.7ex\hbox{\~{}}}eleonora}

\address{Aix Marseille Univ, CNRS, Centrale Marseille, I2M, Marseille, France}
\address{postal address: I2M, Luminy, Case 907, F-13288 Marseille Cedex 9, France}
\email{serge.troubetzkoy@univ-amu.fr}
\urladdr{http://iml.univ-mrs.fr/{\lower.7ex\hbox{\~{}}}troubetz/} \date{}

\thanks{We gratefully acknowledge support of the projects Physeco, MATH-AMSud,   "Sistemas Din\'{a}micos" funded by CSIC of  Universidad de la Rep\'{u}blica (Uruguay),   and  APEX "Syst\`{e}mes dynamiques: Probabilit\'{e}s et Approximation Diophantienne PAD" funded by the R\'{e}gion PACA.
The project leading to this publication has also received funding from Agencia Nacional de Investigaci\'{o}n e Innovaci\'{o}n (ANII) of Uruguay, Excellence Initiative of Aix-Marseille University - A*MIDEX and Excellence Laboratory Archimedes LabEx (ANR-11-LABX-0033), French ``Investissements d'Avenir'' programmes.}

\begin{abstract}
We study the measure theoretic properties of typical $C^0$ maps of the interval.
We prove that any ergodic measure is pseudo-physical, and conversely, any  pseudo-physical measure is in  the
closure of the ergodic measures, as well as in the closure of the atomic measures.    We show  that the set of pseudo-physical measures is meager in the space of
all invariant measures.
Finally, we study the entropy function. We construct pseudo-physical measures with infinite entropy. We also prove that, for each $m \ge 1$, there exists infinitely many pseudo-physical measures with entropy $\log m$, and deduce that the entropy function is  neither upper semi-continuous nor lower semi-continuous.

\end{abstract}\maketitle

\section{Introduction}\label{s1}

The topological dynamics of $C^0$-generic maps of compact manifolds has been intensively studied by Hurley and his co-authors  \cite{H1,H2,AHK}.
The measure theoretic properties of
$C^0$-generic volume preserving maps of compact manifolds have also been well studied, starting with the famous Ulam-Oxtoby theorem: \emph{the generic volume preserving maps of compact
manifolds is ergodic \cite{OU}}, and continued by extensive work by Alpern and Prasad \cite{AP}.  Outside the volume preserving maps,
the measure theoretic properties of  $C^0$-generic systems on compact manifolds have first been studied by Abdenur and Andersson \cite{AA}.  Their main result
is that a $C^0$-generic map has no physical (i.e., SRB) measure, however  the Birkhoff average of any continuous function is convergent
for Lebesgue a.e.\ point.
This result is the starting point of our investigation.  We specialize to the case of the interval, and describe the pseudo-physical properties  of the ergodic measures for
$C^0$-generic maps.

\subsection{Definitions}
Throughout the article we consider the space $\C$ of continuous maps of the interval $[0,1]$ to itself.  For
  $I,J \subset [0,1]$ let $d(I,J) \defeq \inf \{ |x - y| : x \in I, j \in J\}$.


We will often refer to a Borel probability measures on $[0,1]$  simply as a  \em measure. \em
We  consider the following distance between measures,  which induces the weak$^*$ topology on the set of all measures
\begin{equation}
\label{eqnDistanceOfMeasures}
\mbox{dist} (\mu, \nu) \defeq \sum_{i = 1}^\infty \frac{1}{2^i} \left |    \int \Psi_i \, d\mu - \int \Psi_i \, d\nu \right |\end{equation}
where $\{\Psi_i\}_{i \in \N}$ is a countable dense subset of $\C$.

For any point $x \in [0,1]$,  let  $p\omega(x)$  be the set of the Borel probability measures on $[0,1]$
that are the limits in the weak$^*$ topology of the convergent
subsequences of the following sequence
$$\left \{  \frac1n \sum_{j=0}^{n-1} \delta_{f^jx} \right\}_{n \in \N}$$
where $\delta_y$ is the Dirac delta probability measure supported in $y \in [0,1]$.

\begin{definition} \em A measure $\mu$  is called \em physical or SRB \em if $p\omega(x) = \mu$ for a set $A(\mu)$ of positive Lebesgue measure. Note that we do not require physical measures to be ergodic.
\end{definition}

In this article we consider the following generalization of the above definition, introduced in \cite{CE1} and studied in the $C^1$ case in \cite{CE2}, \cite{CCE1}, \cite{CCE2}:

\begin{definition} \em \label{definitionpseudo-physical}
A probability measure $\mu$ is called \em pseudo-physical for $f$ \em if for all $\e > 0$ the set $A_\e(\mu) \defeq \{x \in [0,1]: \mbox{dist} (p\omega(x),\mu) <  \nolinebreak
\e\}$ has positive Lebesgue
measure.  We denote by $\O_f$   the set of  pseudo-physical measures for $f$.

(In \cite{CE1,CE2,CCE1} pseudo-physical measures were called SRB-like.)

Note that any pseudo-physical measure is automatically $f$-invariant, and   we do not require a pseudo-physical measure to be ergodic.
\end{definition}

\begin{definition}
\label{definitionTypicalMaps} \em
We will call a property  \emph{typical}  if there is a dense $G_{\delta}$ subset $G \subset \C$ such that all $f \in G$ share this property.
\end{definition}

\subsection{Statement  of  results}
The main results of this article are that the following properties are typical for a map   $f \in \C$:

\noindent$\bullet$ The map $f $ has non countably many pseudo-physical measures supported on fixed points; hence with zero entropy (Theorem \ref{theoremSRB-l-withZeroEntropy}).

\noindent$\bullet$ Any atomic invariant measure supported on a periodic orbit is pseudo-physical, and the set of pseudo-physical measures is the closure of such atomic  measures with zero entropy (Theorem \ref{TheoremSRB-l=ClosureErgodic}).

\noindent$\bullet$ Any ergodic measure is pseudo-physical, and the set of pseudo-physical measures is the closure of the ergodic measures (Theorem \ref{TheoremSRB-l=ClosureErgodic}).

\noindent$\bullet$ The set of pseudo-physical measures is meager in the space of invariant measures; moreover, it is a closed set with empty interior (Theorem \ref{theoremSRB-l_Meager}).

\noindent $\bullet$ For any natural number $m \geq 2$, there exists infinitely many pseudo-physical ergodic measures whose metric entropy is exactly $\log m$ (Theorem \ref{theorempseudo-physicalMeasEntropyLog_m}).

\noindent $\bullet$ There exists pseudo-physical ergodic measures whose metric entropy is infinite (Theorem \ref{theorempseudo-physicalInfiniteEntropy}).

\noindent $\bullet$ The map $f$ is non expansive (Corollary \ref{corollaryNonExpansive}). Moreover, $f$ is Lebesgue-almost everywhere strongly non-expansive (Definition \ref{definitionStronglyNonExpansive}, Corollary \ref{corollaryStronglyNonExpansive}).

\noindent $\bullet$ The entropy function is neither upper semi-continuous nor lower semi-continuous. Moreover, it is also neither upper nor lower semi-continuous  when restricted to the set of pseudo-physical measures. (Corollary \ref{CorollaryNoSemiContinuity}).

\section{A technical lemma}\label{s2}
We will use the following lemma  on the approximation of measures.

\begin{lemma}\label{lemma-dist}
For any   $\e >0 $, there exists  $q \geq 1$ such that,
if $\mu$ and $\nu$ are  probability measures satisfying:
$supp (\nu) \cup supp (\mu) \subset \cup_{j=1}^m I_j$ for some pair-wise disjoint closed intervals with nonempty interiors,  lengths at most $1/q$ and
$|\nu(I_j) - \mu(I_j) | \le 1/qm$ for each $j$
then $\mbox{dist}(\mu, \nu) < \e$.
\end{lemma}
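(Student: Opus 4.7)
The plan is to reduce the infinite series defining $\mbox{dist}(\mu,\nu)$ to a controlled finite sum, and within each term to approximate the integrals against $\Psi_i$ by a discrete sum using the intervals $I_j$ as ``atoms.'' Throughout I will assume (as is standard for such a countable dense family) that $\|\Psi_i\|_\infty\le 1$ for every $i$; otherwise one renormalizes $\Psi_i$ by its sup norm, which only rescales the weights.

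First I would choose an integer $N=N(\e)$ so that the tail satisfies
\[
\sum_{i>N}\frac{1}{2^i}\Bigl|\int\Psi_i\,d\mu-\int\Psi_i\,d\nu\Bigr|\le\sum_{i>N}\frac{2}{2^i}=2^{-N+1}<\frac{\e}{2}.
\]
It then suffices to arrange, through the choice of $q$, that $|\int\Psi_i\,d\mu-\int\Psi_i\,d\nu|<\e/2$ for each $i=1,\dots,N$, since $\sum_{i=1}^N 2^{-i}<1$.

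Next I would use uniform continuity: for $i=1,\dots,N$ pick $\de_i>0$ with $|x-y|<\de_i\Rightarrow|\Psi_i(x)-\Psi_i(y)|<\e/8$, and set $\de\defeq\min_{i\le N}\de_i$. Define $q\defeq\max(\lceil 1/\de\rceil,\lceil 4/\e\rceil)$. This $q$ depends only on $\e$, as required.

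Now suppose the hypotheses hold for this $q$. For each $j=1,\dots,m$ fix a reference point $x_j\in I_j$. Since $\mathrm{diam}(I_j)\le 1/q\le\de$, for every $i\le N$ and every $x\in I_j$ we have $|\Psi_i(x)-\Psi_i(x_j)|<\e/8$. Because $\mathrm{supp}(\mu)\cup\mathrm{supp}(\nu)\subset\bigcup_j I_j$, integrating gives
\[
\Bigl|\int\Psi_i\,d\mu-\sum_{j=1}^m\Psi_i(x_j)\mu(I_j)\Bigr|\le\frac{\e}{8},
\]
and similarly for $\nu$. Combining with the triangle inequality and the hypothesis $|\mu(I_j)-\nu(I_j)|\le 1/(qm)$ yields
\[
\Bigl|\int\Psi_i\,d\mu-\int\Psi_i\,d\nu\Bigr|\le\frac{\e}{4}+\sum_{j=1}^m |\Psi_i(x_j)|\,|\mu(I_j)-\nu(I_j)|\le\frac{\e}{4}+\frac{1}{q}<\frac{\e}{2},
\]
by our choice of $q$. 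Adding this to the tail bound gives $\mbox{dist}(\mu,\nu)<\e$.

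The argument is essentially bookkeeping; there is no substantive obstacle. The only mild subtlety is that the bound $|\mu(I_j)-\nu(I_j)|\le 1/(qm)$ has to be summed over all $m$ intervals to produce a total mass discrepancy of order $1/q$, which is why the hypothesis scales with $m$; having $q$ dominate $1/\de$ controls the pointwise oscillation of each $\Psi_i$ on each $I_j$, while $q\gg 1/\e$ controls the remaining discrete error. Once these two scales are separated, the estimate falls out immediately.
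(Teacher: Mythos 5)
Your proof is correct and follows essentially the same route as the paper's: truncate the series defining $\mbox{dist}(\mu,\nu)$, invoke uniform continuity of $\Psi_1,\dots,\Psi_N$ to control the oscillation of each $\Psi_i$ on each short interval $I_j$, and bound the mass-discrepancy term using the hypothesis $|\mu(I_j)-\nu(I_j)|\le 1/(qm)$ together with $\|\Psi_i\|_\infty\le 1$. The only cosmetic difference is that the paper applies the mean value theorem for integrals (choosing possibly different points $x_j,x_j'\in I_j$ for $\mu$ and $\nu$), whereas you fix a single reference point $x_j$ and use the triangle inequality; both yield the same decomposition and estimate.
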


\begin{proof}
 Fix $n \geq 1$ such that $\sum_{i= n+1} ^{+ \infty} 2^{-i} < {\e}/{2}.$ Then, for any pair of probability measures $\mu, \nu$ we obtain
 \begin{equation}
 \label{eqn01}
 \mbox{dist} (\mu, \nu) < \frac{\e}{2} +  \sum_{i= 1}^n   \frac{1}{2^i} \left |    \int \Psi_i \, d\mu - \int \Psi_i \, d\nu \right |.
 \end{equation}
 From the uniform continuity of the finite family of functions $\{\Psi_i\}_{1 \leq i \leq n}$, there exists $\delta > 0$ such that, if  $|x_1-x_2| < \delta$, then $|\Psi_i(x_1) - \Psi_i(x_2)|< \e/4$ for all $1 \leq i \leq n$.
Fix $q  \in \mathbb{N}^+$  such that
  $ {1}/{q} <  \min (\delta, \e/4) $.

The mean value theorem for integrals, yields for all $1 \leq i \leq n$
\begin{equation*}
  \begin{split}
   \int\Psi_i \, d\mu &= \sum _{j= 1} ^m \int_{I_j} \Psi_i  \, d \mu = \sum _{j= 1} ^m\Psi_i(x_j) \mu (I_j) \mbox{ for some } x_j \in I_j,\\
    \int\Psi_i \, d\nu & = \sum _{j= 1} ^m \int_{I_j} \Psi_i  \, d \nu = \sum _{j= 1} ^m\Psi_i(x'_j) \nu (I_j) \mbox{ for some } x'_j \in I_j.
  \end{split}
  \end{equation*}
\nopagebreak[4]
From this we deduce
\begin{equation*}
\begin{split}
 \int\Psi_i \, d\mu - \int \Psi_i \, d\nu  &  = \sum _{j= 1} ^m \left (\Psi_i(x_j) \mu (I_j) -\Psi_i(x'_j) \nu (I_j) \right ) \hspace{2in} \phantom{a} \\
\hspace{1.1in}  & =\sum _{j= 1} ^m  \left (\Psi_i(x_j) (\mu (I_j) -  \nu (I_j) ) +  (\Psi_i(x_j)   -\Psi_i(x'_j) ) \nu (I_j) \right ).
\end{split}
\end{equation*}
Since  $|\nu(I_j) - \mu(I_j) | \le 1/qm$ for each $j$, we conclude
$$ \left |\int\Psi_i \, d\mu - \int \Psi_i \, d\nu \right | \leq \sum _{j= 1} ^m  \frac{1}{qm}    +  \frac{\e}{4} \sum _{j= 1} ^m   \nu (I_j) < \e/2 \ \ \ \forall \ 1 \leq i \leq n.   $$
Substituting this inequality in (\ref{eqn01}),  finishes the proof of Lemma \ref{lemma-dist}.
\end{proof}

 \section{Abundance of zero-entropy pseudo-physical measures} \label{sectionSRB-l-zeroEntropy}
\begin{theorem} \label{theoremSRB-l-withZeroEntropy}
For a typical  map in $\C$ there exists an uncountable set $\mathcal F$ of  fixed points such that for each $x \in \mathcal F$ the associated Dirac delta measure $\delta_x$  is  a pseudo-physical measure.
\end{theorem}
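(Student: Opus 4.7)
The plan is to construct a residual $G\subset\C$ whose elements admit a Cantor scheme of nested strictly trapping open intervals, from whose tips I will read off uncountably many pseudo-physical Dirac fixed points. Here an open interval $V\subset[0,1]$ is \emph{strictly trapping} for $f$ if $f(\overline V)\subset V$; Brouwer then gives at least one fixed point in $V$, every forward orbit entering $V$ stays there forever, and whenever $|V|$ is small, every probability measure supported in $\overline V$ lies weak$^*$-close to $\delta_x$ for each $x\in V$ by Lemma \ref{lemma-dist} (applied with $m=1$ and $I_1=\overline V$).

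Let $\{U_k\}_{k\in\N}$ enumerate the rational open sub-intervals of $[0,1]$. For $(n,k)\in\N^2$ define
\[
 H_{n,k}:=\{f:f(\overline{U_k})\not\subset\overline{U_k}\}\ \cup\ \{f:\exists k_1\neq k_2\text{ with }\overline{U_{k_j}}\subset U_k,\ \overline{U_{k_1}}\cap\overline{U_{k_2}}=\emptyset,\ |U_{k_j}|\le 1/n,\ f(\overline{U_{k_j}})\subset U_{k_j},\ j=1,2\},
\]
and $M:=\{f:\exists k,\ f(\overline{U_k})\subset U_k\}$. Both clauses of $H_{n,k}$ are open (weak trapping is a closed condition, while strict trapping is open), so $H_{n,k}$ is open, and $M$ is open as a union of open conditions. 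For density of $H_{n,k}$, if $f(\overline{U_k})\subset\overline{U_k}$ then Brouwer yields a fixed point $p\in\overline{U_k}$, which, after a negligible shift, may be assumed to lie in the interior of $U_k$; near $p$ I install two disjoint \emph{plateau} perturbations of the form $\tilde f\equiv q_j$ on small rational sub-intervals centered at two points $q_1,q_2$ close to $p$, smoothly bridged to $f$ outside a narrow collar. The sup-norm cost of this surgery is bounded by the modulus of continuity of $f$ at $p$, and is therefore arbitrarily small independently of $|U_k|$. Density of $M$ follows from the same one-plateau construction at any fixed point of $f$ in $(0,1)$. Setting $G:=M\cap\bigcap_{n,k}H_{n,k}$, Baire's theorem gives $G$ residual in $\C$.

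For $f\in G$ I assemble a Cantor scheme inductively. Use $M$ to choose a rational $U_\emptyset$ with $f(\overline{U_\emptyset})\subset U_\emptyset$. Given a strictly trapping rational $U_\sigma$ at level $\ell=|\sigma|$, the first clause of $H_{\ell+1,k(\sigma)}$ fails, so the second clause furnishes two disjoint strictly trapping rational sub-intervals $U_{\sigma 0},U_{\sigma 1}\subset U_\sigma$ with disjoint closures and of lengths at most $1/(\ell+1)$. For each $\sigma\in\{0,1\}^\N$ the nested chain $\overline{U_{\sigma|_\ell}}$ shrinks to a single point $x_\sigma$, which is a fixed point of $f$ by forward invariance of each $\overline{U_{\sigma|_\ell}}$; distinct $\sigma$'s yield distinct $x_\sigma$'s by disjointness at the first differing level, so $\mathcal F:=\{x_\sigma:\sigma\in\{0,1\}^\N\}$ is uncountable. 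Finally, for any $\e>0$, choose $\ell$ so large that Lemma \ref{lemma-dist} ensures every probability measure supported in $\overline{U_{\sigma|_\ell}}$ lies within $\e$ of $\delta_{x_\sigma}$; every $y\in U_{\sigma|_\ell}$ has forward orbit trapped in $U_{\sigma|_\ell}$, so every element of $p\omega(y)$ is such a measure, and therefore $U_{\sigma|_\ell}\subset A_\e(\delta_{x_\sigma})$ has positive Lebesgue measure, proving $\delta_{x_\sigma}$ pseudo-physical.

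I expect the main technical difficulty to be the density of $H_{n,k}$ when $|U_k|$ is not small: a naive attempt to bend the image of $U_k$ under $f$ into a narrow target would demand sup-norm changes of order $|U_k|$, possibly exceeding a prescribed $\e$. The resolution is to place the two new plateaus immediately next to a fixed point $p$ of $f$ in $\overline{U_k}$, where $f$ is already uniformly close to the identity, so that the perturbation cost is governed by $f$'s modulus of continuity at $p$ rather than by $|U_k|$. A secondary point is that the sub-intervals produced at each inductive step must themselves be rational, so as to be indexed in the enumeration $\{U_k\}$; this is why rationality is built into the definition of $H_{n,k}$, and is what enables iterating the residual condition at each successive level of the Cantor scheme.
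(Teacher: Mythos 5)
Your proof is correct, and it follows a genuinely different route from the paper's. The paper's residual set $\mathcal G=\bigcap_q C^0_q$ requires, for each $q$, a cluster of $q+1$ fixed points inside a single interval of length $<1/q$, each with its own disjoint strictly trapping neighbourhood; uncountably many pseudo-physical Dirac measures are then obtained by choosing convergent ``diagonal'' sequences $x_{i_q}(q)$ and passing to the limit, and pseudo-physicality of the limit Dirac mass is obtained indirectly by pushing a pseudo-physical measure of the restricted map $f|_{I^{i_q}_q}$ into $\mathcal O_f$ and invoking closedness of $\mathcal O_f$. You instead build a binary Cantor scheme: a residual set $G=M\cap\bigcap_{n,k}H_{n,k}$ forces every rational strictly trapping interval to contain two disjoint strictly trapping rational sub-intervals of length $\le 1/n$, giving a nested tree $\{U_\sigma\}_{\sigma\in\{0,1\}^{<\mathbb N}}$ whose tips $x_\sigma$ are automatically fixed points. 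Your approach has two clear advantages. First, the uncountability is made completely explicit via the dyadic tree and the disjointness-at-first-split argument, whereas the paper's claim of uncountably many distinct diagonal limits relies on the reader to see that the clusters at different scales $q$ can be organized to produce distinct accumulation points. Second, you verify pseudo-physicality of $\delta_{x_\sigma}$ directly -- every orbit starting in $U_{\sigma|_\ell}$ is trapped there, so all of $p\omega(y)$ is supported in $\overline{U_{\sigma|_\ell}}$ and Lemma~\ref{lemma-dist} with $m=1$ puts it $\e$-close to $\delta_{x_\sigma}$ -- avoiding both the passage through the restricted map's invariant measures and the appeal to closedness of $\mathcal O_f$. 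The one point worth stating a bit more carefully in your density argument is that after installing the two plateaus near the fixed point $p$, the collar interpolation can also alter $f$ by at most a quantity controlled by the modulus of continuity of $f$ near $p$ (since all values involved lie near $p$), so the total $\sup$-norm cost is indeed $o(1)$ independently of $|U_k|$; but as sketched, the argument is sound.
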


As a consequence, there exists uncountably  many pseudo-physical measures with zero entropy.

\begin{proof}
Any  map $f \in \C$  has at least one fixed point.
Typically, the extremes of the interval are not fixed points.

For any $q \ge 1$ let
 $C^0_{0,q}$ be the set of $f \in \C$ for which there
exists   $\delta $ satisfying 0$ < \de  <  1/q$ such that $f$	is constant in an open interval of length  $\de /2$ centered at some fixed point $x_0$, and $f$ has at least $q$
distinct fixed points
$\{x_i\}_{i=1}^q$ different from $x_0$ in an open  interval of length $ \delta$ centered at $x_0$, and furthermore for each $x_i$ the map $f$ is constant on a small interval containing it.
Denote by $C^0_q$ the set of  $f \in \C$ such that, for some fixed point $x_0$, there exists at least $q$ other fixed points $\{x_i\}_{i=1}^q$   in an open interval of length  $0 <\delta < 1/q$ that contains $x_0$,  and for each  $i = 0,\dots, q$ there exists a closed interval $I^i_q$ of length
$0 < \delta_i  < \eta$ such that
$x_i \in int(I^i_q)$, the image $f(I^i_q)$ is contained in the interior of $I^i_q$ where $\eta \defeq 3 \cdot \min d(x_i,x_j)$. This choice
of $\eta$ guarantees that the $I^i_q$ are disjoint, and the gap in between any pair of them is at least of size $\eta$.

Then, $C^0_{0,q}$ is dense and   $C^0_{q}$ is open. Since $C^0_{0,q} \subset C^0_{q}$, we deduce that $C^0_{q}$ is open and dense. Consider
the dense $G_\de$ set
$$\G \defeq \bigcap_{q \ge 1} C^0_q.$$
For any $f \in \G$, for each $q \ge 1$
we have  $f \in C^0_{q}$ with associated fixed points $\{x_i(q)\}_{i=0}^{q}$, associated intervals
$\{I^i_{q}\}_{i=0}^{q}$ and gap size $\eta_q > 0$. Note that $d(I^i_q,I^j_q) \ge \eta_q$ for each $i \ne j$.

Consider the sequences $\{\vec i \defeq (i_1,i_2,\dots): 0 \le i_q \le q\}$ for which the points $x_{i_q}(q)$ converge to a point $x_{\vec i}$.
There are uncountably many such sequences with distinct limit points $x_{\vec i}$, each one is a fixed point of the map $f$.  The sequence of intervals
$I^{i_q}_q$ satisfy the following
\begin{align}\label{close}
d_q \defeq d(x_{\vec i},I^{i_q}_q)  & \le d(x_{\vec i},x_{i_q}(q)) + d(x_{i_q}(q),I^{i_q}_q) \\ \nonumber
  & =  d(x_{\vec i},x_{i_q}(q))  \to 0 \text{ as } q \to \infty. \nonumber
\end{align}

Fix $\vec i$ and $q$. Consider the map $g \defeq g_{i_q} \defeq f|_{I^{i_q}_q}: I^{i_q}_q \mapsto I^{i_q}_{q}$ and a pseudo-physical measure $\mu_{i_q}$ of $g$.

Let $J^{i_q}_q \defeq \{x \in [0,1]: d(x,I^{i_q}_q) \}  < 2 d_q\}$, note that $x_{\vec i} \in J^{i_q}_q$. Pseudo-physical measures are invariant, in particular the
the measures $\mu_{i_q}$ are  invariant. Fix $\e  > 0$ and apply
Lemma \ref{lemma-dist} for sufficiently large $q$ to the measure $\mu_{i_q}$  and $\delta_{x_{\vec i}}$  (here $m=1$, $\mu_{i_q}(J^{i_q}_q )
=  \delta_{x_{\vec i}}(J^{i_q}_q) = 1)$ to conclude that
$\mbox{dist}(\mu_{i_q},\delta_{x_{\vec i}} ) < \e$ for all sufficiently large $q$, and thus
$$\lim_{q \to \infty} \mu_{i_q} = \delta_{x_{\vec i}}. $$
Since the compact support of  $\mu_{i_q}$ is contained in $I^{i_q}_q$, this measure
 is also pseudo-physical for $f$.
Besides the set of pseudo-physical measures for $f$ is closed, we conclude that $\delta_{x_{\vec i}}$ is pseudo-physical for $f$.
\end{proof}

An $f \in \C$ is called \em expansive in the future \em  if there exists a   constant $\alpha >0$,  called the  \em expansivity constant, \em such that, for any two points $x, y \in [0,1]$,
  if  $d(f^n(x), f^n(y) ) \leq \alpha $ for all $  n \in \mathbb{N},$ then $  x=y.$
\begin{corollary} \label{corollaryNonExpansive}
A typical  map in $\C$ is not expansive in the future.
\end{corollary}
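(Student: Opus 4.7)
The plan is to deduce non-expansiveness directly from the uncountable family of fixed points provided by Theorem \ref{theoremSRB-l-withZeroEntropy}. For a typical $f \in \C$, that theorem produces an uncountable subset $\mathcal F \subset [0,1]$ of fixed points of $f$. Since $[0,1]$ has a countable topological basis, any uncountable subset must have a condensation point; in particular, for every $\alpha > 0$ there exist two distinct points $x, y \in \mathcal F$ with $d(x,y) < \alpha$. (Equivalently, cover $[0,1]$ by finitely many closed intervals of length $< \alpha$; pigeonhole forces one of them to contain at least two elements of $\mathcal F$.)

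Once such a pair has been isolated, the conclusion is immediate. Both orbits are constant, so $d(f^n(x), f^n(y)) = d(x,y) < \alpha$ for every $n \in \mathbb N$, while $x \ne y$. This rules out $\alpha$ as an expansivity constant, and since $\alpha$ was arbitrary, $f$ cannot be expansive in the future.

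There is essentially no obstacle: the corollary is a one-line consequence of the theorem just established. If one prefers to bypass the uncountability statement and argue more directly, it is enough to reuse the dense $G_\delta$ set $\G = \bigcap_{q \ge 1} C^0_q$ from the proof of Theorem \ref{theoremSRB-l-withZeroEntropy}. For any $f \in \G$ and any $q \ge 1$, membership in $C^0_q$ supplies at least $q+1$ distinct fixed points $x_0, x_1, \dots, x_q$ inside an open interval of length less than $1/q$, so two of them lie within distance $1/q$ of each other. Letting $q \to \infty$ along any prescribed would-be expansivity constant $\alpha > 0$ defeats it in exactly the same way as above, which finishes the proof.
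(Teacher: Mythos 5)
Your argument is correct. Where it differs from the paper is in \emph{which} output of Theorem \ref{theoremSRB-l-withZeroEntropy} it leans on: you use the \emph{statement} of that theorem (an uncountable set $\mathcal F$ of fixed points), together with the elementary observation that any uncountable subset of $[0,1]$ contains two distinct points at arbitrarily small distance; the paper instead reaches back into the \emph{proof} of that theorem and reuses the small $f$-invariant intervals $I_q$ of length $<1/q$, noting that any two (not necessarily fixed) points of $I_q$ stay within $1/q$ of each other under all forward iterates. Both are one-paragraph arguments and both defeat every candidate expansivity constant. Your version has the small advantage of being a black-box consequence of Theorem \ref{theoremSRB-l-withZeroEntropy} as stated, so it would survive any rewrite of that theorem's proof; the paper's version is marginally more self-contained once the reader has the intervals $I_q$ in hand, since it needs no pigeonhole step and does not insist the trapped points be fixed. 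Your alternative second route, via the sets $C^0_q$, is essentially the paper's argument specialized to fixed points, and is likewise fine.
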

\begin{proof}
In the proof of Theorem \ref{theoremSRB-l-withZeroEntropy},  we constructed arbitrarily small  intervals $I_q$ of length $0 < \delta < 1/q$ such that $f(I_q) \subset I_q$. Thus, for any pair of points $x \neq y$ in $I_q$, the distance of their images is less than $1/q$.  Hence, if $f$ were expansive, the expansivity constant must be smaller than $1/q$ for all $q \in \N^+$, which is a contradiction.
\end{proof}

Thus, we can not apply  the classical theorem  of upper semi-continuity of the entropy function for expansive maps to typical maps, to bound from below the entropy of a measure by the entropy of nearby measures. In fact,  in the sequel we will prove that, for typical $f \in \C$, the entropy function is neither upper  nor lower semi-continuous.

\section{Shrinking intervals cover Lebesegue a.e.}

A  \em  periodic shrinking interval of period $p \geq 1$,  \em is a nonempty open interval $I$ such that  $\{f^j( \overline I)\}_{0 \leq j \leq p-1}$ is a family of pairwise disjoint sets,  $f^p(\overline I) \subset I$, and   $\mbox{length} (f^j(I) ) < \mbox{length}(I)$ for all $1 \leq i \leq p-1$.

An nonempty open interval $J$  is \em eventually periodic shrinking,  \em if there exists a periodic shrinking interval  $I$ and an $n \in \N^+ $ such that  $ f^n(J) \subset I$, and $\mbox{length} (I), \mbox{length}(f^j(J)) <  \mbox{length}(J)$ for all $1 \leq j \leq n-1$ .

A periodic or eventually periodic shrinking interval   for some $f \in \C$  is also a periodic or eventually periodic shrinking interval  for all $g$ in a small neighborhood of $f$ in $\C$. Any periodic shrinking interval $I$ of period $p$ contains at least one periodic point of period $p$.
If $I$ is a periodic or eventually periodic shrinking interval with period $p$, then $\mbox{length}(f^j(I)) < \mbox{length}(I)$ for all $j \geq 1$.

\begin{theorem} \label{theoremLebesgue-a-e-isInShrinkingInterval}
For a typical map  Lebesgue-a.e.\ point $x \in [0,1]$ belongs to a sequence  $\{I_q\}_{q \in \mathbb{N}+}$ of  arbitrarily small eventually periodic shrinking intervals $I_q$; and  the length of the $I_q$ goes to zero as $q \rightarrow + \infty$.
\end{theorem}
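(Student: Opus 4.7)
The plan is to produce a dense $G_{\de}$ subset of $\C$ as $\bigcap_{q,r \in \N^+} U_{q,r}$, where
$$U_{q,r} \defeq \{f \in \C : \mbox{Leb}(E_q(f)) > 1 - 1/r\}$$
and $E_q(f) \subset [0,1]$ is the (open) set of points lying in some eventually periodic shrinking interval of length less than $1/q$. For $f$ in this intersection, $\mbox{Leb}(E_q(f)) = 1$ for every $q$, so $\bigcap_q E_q(f)$ has full Lebesgue measure, and every point in it admits, for each $q$, an eventually periodic shrinking interval of length less than $1/q$ containing it, yielding a sequence $\{I_q\}$ whose lengths tend to zero.

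Openness of $U_{q,r}$ is immediate from the stability remark stated just before the theorem: finitely many eventually periodic shrinking intervals of length $<1/q$ and total measure $>1-1/r$ witnessing $f \in U_{q,r}$ remain eventually periodic shrinking, with unchanged length, for all $g$ in some $C^0$-neighborhood of $f$.

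For density, given $f$ and $\e > 0$, I build a plateau map $g$ with $\|g-f\|_\infty < \e$ belonging to $U_{q,r}$. Fix $N$ so large that $1/N < 1/q$ and, by uniform continuity, $f$ oscillates by less than $\e/3$ on any sub-interval of length $3/N$; fix $\beta$ with $0 < \beta < \min(1/(2Nr), \e/(6N))$. Partition $[0,1]$ into equal intervals $J_i = [(i-1)/N, i/N]$ with midpoints $m_i$, and set $J_i'' \defeq ((i-1)/N + \beta, i/N - \beta)$. Define $g$ to be the constant $y_i \defeq f(m_i)$ on each $J_i''$ and to linearly interpolate between $y_i$ and $y_{i+1}$ on each transition zone of width $2\beta$; uniform continuity then yields $\|g-f\|_\infty < 2\e/3$. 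A further perturbation of each $y_i$ by at most $\e/3$ (possible since the transition zones have total length less than $\e/3$) ensures every $y_i$ lies in some plateau $J_{\tau(i)}''$, defining a self-map $\tau$ of $\{1,\dots,N\}$, while keeping $\|g-f\|_\infty < \e$.

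Since $\tau$ is a self-map of a finite set, every $i$ is $\tau$-eventually periodic. Constancy of $g$ on plateaus gives $g^j(\overline{J_i''}) = \{y_{\tau^{j-1}(i)}\} \subset J_{\tau^j(i)}''$ for every $j \geq 1$, a single point in a distinct plateau, so the disjointness and length requirements are automatic. If $i$ lies on a $\tau$-cycle of period $p$, pick a tiny open interval $I \subset J_i''$ about the periodic point $y_{\tau^{p-1}(i)}$ with $\mbox{length}(I) < \mbox{length}(J_i'')$: a direct check shows $I$ is itself period-$p$ shrinking and $g^p(J_i'') \subset I$, so $J_i''$ is eventually periodic shrinking. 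If $i$ is pre-periodic with $\tau^n(i) = k$ on a cycle of period $p$, an analogous tiny interval $I \subset J_{\tau(k)}''$ centered at $y_k$ is period-$p$ shrinking, $g^{n+1}(J_i'') = \{y_k\} \subset I$, and all intermediate images of $J_i''$ are single points, so again $J_i''$ is eventually periodic shrinking. Hence $\bigcup_{i=1}^N J_i'' \subset E_q(g)$ has Lebesgue measure at least $1 - 2N\beta > 1 - 1/r$, i.e., $g \in U_{q,r}$. The main obstacle is the strict length inequality in the definition of eventually periodic shrinking: since all plateaus $J_i''$ have equal length, no plateau can serve as the target periodic shrinking interval for itself or a neighbor, forcing the construction to always pass to a strictly smaller auxiliary interval centered at the relevant periodic orbit; the smaller nuisance of arranging $y_i$ to avoid transition zones is handled by the perturbation above.
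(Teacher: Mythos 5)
Your proposal is correct and follows essentially the same strategy as the paper: exhibit a dense $G_\delta$ by showing that, for each $q$ and $r$, the maps whose eventually periodic shrinking intervals of length $<1/q$ cover all but $1/r$ of Lebesgue measure form an open and dense set, with density obtained by perturbing $f$ to a piecewise-constant plateau map whose finite plateau dynamics makes every plateau eventually periodic shrinking. The only differences are cosmetic (your plateaus are centered and the constant values arise from $f(m_i)$ perturbed into plateaus, whereas the paper left-aligns the plateaus and sends midpoints to midpoints), and you are in fact somewhat more explicit than the paper about why the strict length inequalities in the definition of eventually periodic shrinking force passing to a tiny auxiliary interval around the periodic point.
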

\begin{proof}
For   $q,k \in \N^+$, denote by ${\mathcal S}_{q,k}$  the set of maps in $\C$, for which there exists a finite family of nonempty open intervals,
which we denote by $\{  I_ {1}, \ldots,   I_h\}$,     such that:

\noindent (i)  $Leb(I_i) < 1/q$ for   $i= 1, 2, \ldots, h$,

\noindent (ii) $I_i$ is a periodic or eventually periodic shrinking interval,

\noindent (iii) $Leb ([0,1]\setminus \bigcup_{i= 1}^k I_i) < 1/k$.

The set ${\mathcal S}_{q,k}$ is open   in $\C$. In fact, for each $f \in {\mathcal S}_{q,k}$, the same family of shrinking intervals of $f$, is also a family of shrinking intervals for any other map $g$ in a sufficiently small neighborhood of $f$.
Now, let us prove that ${\mathcal S}_{q,k}$ is dense in $\C$.
Fix any map $f \in \C$. For any   $\e >0$,
 let us construct  a perturbation $g$ of $  f$ in ${\mathcal C}$,  and a family $\{I_i\}_ {1 \leq i \leq h}$ of shrinking intervals that  satisfies conditions (i) (ii) and (iii).
Take $0 <\delta < \e/3$ such that $d(f(x_1), f(x_2)) < \e/2$ whenever  $d(x_1, x_2) < \delta$.
Choose $N   \in \mathbb{N}$ such that $N > q$ and $ 1/N < \delta$. Consider the partition of $[0,1]$ into exactly $N$ intervals $J_1, \ldots, J_N$, all with the same length $1/N$.
Let $x_i$ be the midpoint of $J_i$. By construction $f(J_i) \subset (f(x_i)- \e/2, f(x_i) + \e/2)$. Since $(1/N) < \delta < \e/3 $, there exists at least one interval $J_{j(i)} \subset (f(x_i)- \e/2, f(x_i) + \e/2)$. Consider   the midpoint $x_{j(i)}$ of $J_{j(i)}$ and construct $\widehat g: [0,1]\mapsto [0,1]$ by the equality $\widehat g|_{J_i} \defeq x_{j(i)} $ for all $i= 1, \ldots, N$. The function $\widehat g$ is piecewise constant, possibly discontinuous at the boundary points of each interval $J_i$. Any point in $[0,1]$ is eventually periodic by $\widehat g$, and by construction
\begin{equation}
\label{eqn02} d(\widehat g(x), f(x)) < \e \ \ \forall \ x \in [0,1].
\end{equation}
Besides, if $\partial J_i \cap  \partial J_{l} \neq \emptyset$, then the distance between the midpoints $x_i$ and $x_l$ of the intervals $J_i$ and $J_l$ respectively is exactly $1/N$. Thus $f(x_l) \in (f(x_i)- \e/2, f(x_i) + \e/2) $, and thus  $d (x_{j(i)}, x_{j(l)}) < 2 \e$. Therefore   the discontinuity jumps $\Delta_i$ of $\widehat g$ are all smaller than $2 \e$ in absolute value.

For each $i$ consider an open sub-interval  $I_i \subset J_i$ such that the left boundary point of $I_i$ coincides with the left boundary point of $J_i$, and such that
$$0 < \mbox{length}(J_i \setminus I_i) <\frac{1}{Nk}. $$  Define the continuous piecewise affine map $  g: [0,1]\mapsto [0,1]$ such that,
$  g|_{I_i} = g|_{I_i}$  and    $g|_{J_i \setminus I_i}$ is affine for all  $i= 1, \ldots, N$. Since the absolute values   of the discontinuity jumps  of $  g$ are  smaller than $2 \e$, we deduce that
$d(  g(x), \widehat g(x)) < 2 \e \ \ \forall \ x \in [0,1].$
Together with  (\ref{eqn02}) this implies
\begin{equation}
\label{eqn03}
  \mbox{dist}(g , f ) < 3 \e.\end{equation}
  By construction, the family $\{I_i\}_{1 \leq i \leq N}$ of open intervals satisfies properties (i), (ii) and (iii) for $ g \in {\C}$. Thus $g \in {\mathcal S}_{q,k}$.
 We have proved that  ${\mathcal S}_{q,k}$ is dense in $\C$.

To end the proof of Theorem \ref{theoremLebesgue-a-e-isInShrinkingInterval}, consider the set
$${\mathcal S} \defeq \bigcap_{q \in \mathbb{N}^+}\bigcap_{k \in \mathbb{N}^+} {\mathcal S}_{q,k}.$$
On the one hand, since ${\mathcal S}_{q,k}$ is open and dense in $\C$, we have that typical maps of $\C$ belong to ${\mathcal S}$. By construction of ${\mathcal S}_{q,k}$, any map in ${\mathcal S}$ satisfies the conclusion of the  Theorem.
\end{proof}
\begin{definition} \em \label{definitionStronglyNonExpansive}
A map $f \in \C$ is \em Lebesgue-a.e.~strongly non expansive in the future, \em  if for any real number $\alpha >0$,   and for Lebesgue a.e.~$x \in [0,1]$,
$$Leb\Big (\{ y \in [0,1]: d(f^n(x), f^n(x))< \alpha \ \  \forall \ n \in  \mathbb{N}  \}   \Big) > 0.$$

\end{definition}

\begin{corollary} \label{corollaryStronglyNonExpansive}
A typical  map in $\C$  is Lebesgue-a.e.\ strongly non expansive in the future.
\end{corollary}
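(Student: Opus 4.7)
The plan is to derive this almost directly from Theorem \ref{theoremLebesgue-a-e-isInShrinkingInterval}, together with the observation (recorded immediately before the statement of that theorem) that for any eventually periodic shrinking interval $I$ of period $p$ one has $\mathrm{length}(f^j(I)) < \mathrm{length}(I)$ for every $j \geq 1$. The key point is that two nearby points trapped in the same eventually periodic shrinking interval $I$ have orbits that are forced to remain in $f^n(I)$, which has diameter uniformly bounded by $\mathrm{length}(I)$.

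More precisely, fix a typical $f \in \C$ in the dense $G_\delta$ set $\mathcal{S}$ produced by Theorem \ref{theoremLebesgue-a-e-isInShrinkingInterval}. For each $q \in \mathbb{N}^+$, that theorem supplies a full Lebesgue measure set $X_q \subset [0,1]$ such that every $x \in X_q$ is contained in an eventually periodic shrinking interval $I_q(x)$ of length less than $1/q$. Setting $X \defeq \bigcap_{q \geq 1} X_q$, the set $X$ still has full Lebesgue measure by the countable intersection of full-measure sets.

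Now I would verify the quantitative condition. Given $x \in X$ and any $\alpha > 0$, pick $q$ with $1/q < \alpha$ and consider the interval $I_q(x)$. Since $I_q(x)$ is an eventually periodic shrinking interval, the cited remark yields $\mathrm{length}(f^n(I_q(x))) < \mathrm{length}(I_q(x)) < 1/q$ for every $n \geq 1$, and the same bound holds trivially for $n=0$. Because $f^n$ is continuous and $I_q(x)$ is an interval, $f^n(I_q(x))$ is an interval, so for any $y \in I_q(x)$ both $f^n(x)$ and $f^n(y)$ lie in this interval of length less than $1/q < \alpha$, giving
\[
d(f^n(x), f^n(y)) < \alpha \qquad \text{for all } n \in \mathbb{N}.
\]
Hence $I_q(x) \subseteq \{y \in [0,1] : d(f^n(x),f^n(y)) < \alpha \ \forall n \in \mathbb{N}\}$, and since $I_q(x)$ is a nonempty open interval, its Lebesgue measure is positive, establishing the conclusion for every $x$ in the full-measure set $X$.

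No serious obstacle is anticipated: the technical work is entirely concentrated in Theorem \ref{theoremLebesgue-a-e-isInShrinkingInterval}, and the only mild subtlety is to ensure that the full-measure set $X$ is independent of $\alpha$, which is handled by intersecting the countable family $\{X_q\}_{q \geq 1}$ before quantifying over $\alpha$.
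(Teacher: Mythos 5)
Your proof is correct and follows essentially the same approach as the paper: invoke Theorem \ref{theoremLebesgue-a-e-isInShrinkingInterval} to place Lebesgue-a.e.\ point inside arbitrarily short eventually periodic shrinking intervals, then use the fact that forward images of such an interval never grow to conclude that the whole interval stays $\alpha$-close to the orbit of $x$. Your extra care in building the full-measure set $X$ by intersecting over $q$ before quantifying over $\alpha$ is sound but essentially already built into the statement of the theorem, which supplies a single full-measure set equipped with a sequence of shrinking intervals whose lengths tend to zero.
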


\begin{proof}
Take any periodic or eventually periodic shrinking   interval $I$  with length smaller than $\alpha$. Then,  $\mbox{length}(f^j(\overline I)) <\alpha$ for all $j \geq 0$. Any two points $x, y \in I$  satisfy $\mbox{dist}(f^j(x), f^j(y)) < \alpha $ for all $j \geq 0$. Thus for any point $x \in I$ we have
$$ Leb\Big (\{ y \in [0,1]:  d(f^n(x), f^n(x))< \alpha \ \  \forall \ n \in  \mathbb{N}  \}   \Big) \geq Leb(I) > 0.$$
Thus the result follows directly from  Theorem \ref{theoremLebesgue-a-e-isInShrinkingInterval}.
\end{proof}

\section{Approximation of pseudo-physical measures by periodic orbits.}

\begin{lemma}
\label{lemmaInvMeasOnShrinkingIntervals} Let $f \in \C$ and $\mu$ be an $f$-invariant measure. Assume that $I \subset [0,1]$ is a periodic shrinking interval of period $p$, and denote  $K \defeq  \bigcup_{j= 0}^{p-1} f^j(\overline I).$ Then $$\mu (f^j(\overline I)) = \mu(\overline I) = \frac{\mu(K)}{p}   \ \ \ \forall \ j  \geq 0. $$
\end{lemma}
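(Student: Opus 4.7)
The plan is to first establish the equalities $\mu(f^j(\overline{I})) = \mu(\overline{I})$ for the cyclic pieces $j = 0, 1, \ldots, p-1$, then deduce $\mu(K) = p\mu(\overline{I})$ from disjointness, and finally extend to all $j \geq 0$ by exploiting the fact that $f^p(\overline{I}) \subset \overline{I}$.

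For the first phase, I would set $I_j \defeq f^j(\overline{I})$ for $0 \le j \le p-1$. The definition of a periodic shrinking interval yields $f(I_j) = I_{j+1}$ for $0 \le j \le p-2$, while $f(I_{p-1}) = f^p(\overline{I}) \subset I \subset I_0$. Consequently $I_j \subset f^{-1}(I_{j+1})$ for $0 \le j \le p-2$, and $I_{p-1} \subset f^{-1}(I_0)$. Applying the $f$-invariance of $\mu$ to each of these inclusions produces the cyclic chain
\begin{equation*}
\mu(I_0) \le \mu(I_1) \le \cdots \le \mu(I_{p-1}) \le \mu(I_0),
\end{equation*}
which forces all these values to coincide. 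Pairwise disjointness of the $I_j$ then gives $\mu(K) = \sum_{j=0}^{p-1}\mu(I_j) = p\mu(\overline{I})$.

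For the second phase, I would let $L_j \defeq f^j(\overline{I})$ for all $j \geq 0$. The containment $f^p(\overline{I}) \subset \overline{I}$ propagates under $f^j$ to give $L_{j+p} \subset L_j$, while conversely every $x = f^j(y) \in L_j$ satisfies $f^p(x) = f^{j+p}(y) \in L_{j+p}$, so $L_j \subset f^{-p}(L_{j+p})$. Combining both inclusions with the $f^p$-invariance of $\mu$,
\begin{equation*}
\mu(L_j) \le \mu\bigl(f^{-p}(L_{j+p})\bigr) = \mu(L_{j+p}) \le \mu(L_j),
\end{equation*}
so $\mu(L_j) = \mu(L_{j+p})$. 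Iterating and writing $j = kp + r$ with $0 \le r < p$ yields $\mu(f^j(\overline{I})) = \mu(L_r) = \mu(\overline{I})$, concluding the argument. I do not anticipate a real obstacle here; the only subtlety is that $\mu(f(A)) = \mu(A)$ can fail for non-invertible $f$, so both steps must be phrased in terms of preimage inclusions in order to apply $\mu \circ f^{-1} = \mu$ cleanly.
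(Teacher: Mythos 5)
Your proof is correct and rests on the same two facts the paper uses: the preimage inclusion $A \subset f^{-n}(f^n(A))$ combined with $\mu \circ f^{-n} = \mu$, and the nesting $f^p(\overline I) \subset \overline I$. The only difference is organizational: you split the argument into a cyclic chain $\mu(I_0) \le \mu(I_1) \le \cdots \le \mu(I_{p-1}) \le \mu(I_0)$ followed by a periodicity step $\mu(L_j) = \mu(L_{j+p})$, whereas the paper compresses both into a single nested inequality chain using the decomposition $j = kp - i$, but the underlying mechanism is identical.
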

\begin{proof}
 For any $j \geq 0$ let $j= kp - i$  with $k \in \N^+$ and $0 \leq i <p$. Using that $\mu$  is $f$-invariant, that $f^p(\overline I) \subset I$ and that $f^{-j} (f^j (\overline I)) \supset \overline I$ for all $j \geq 0$, we obtain:
 $$\mu (f^j(\overline I)) =  \mu (f^{-j}(f^j(\overline I)) \geq \mu (\overline I) \geq \mu(f^{kp}(\overline I)) = \mu (f^{-i} (f^{kp}(\overline I)) \geq \mu(f^j(\overline I)).$$
Hence, all the inequalities above are equalities; and thus $\mu(f^j(\overline I)) = \mu(\overline I)$ for all $j \geq 0$.
The result follows since $\mu(K) = \sum_{j= 0}^{p-1} \mu ( f^j(\overline I) )$.
\end{proof}
Let ${\mathcal M}_ f$ be the set of \emph{$f$-invariant Borel probability measures} and
 $\mbox{Per}_f$ be  the set of all the \emph{atomic periodic  $f$-invariant measures}:
 $$\{\mu \in {\mathcal M}_f: \mu = \frac{1}{p} \sum_{j= 0}^{p-1} \delta_{f^j(x_0)}
\mbox{ where } x_0 \mbox{ is a periodic point of period } p\}.$$
We denote by $\mathcal E_f$ the set of \emph{ergodic invariant measures} for $f$. In particular, we have $\mbox{Per}_{f} \subset {\mathcal E}_f$
 \begin{theorem} \label{TheoremPeriodicSRB-l}
 For a typical map  $f \in \C$,
 ${\mathcal O}_ f \subset \overline{\mbox{Per}_{f}} \subset \overline {{\mathcal E}_f}.$

 \end{theorem}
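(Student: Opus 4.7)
The plan is to establish the two inclusions separately. The second inclusion $\overline{\mbox{Per}_f} \subset \overline{{\mathcal E}_f}$ is immediate since every atomic measure supported on a periodic orbit is ergodic, so $\mbox{Per}_f \subset {\mathcal E}_f$ and monotonicity of closure gives the nested inclusion. The main work is therefore the first inclusion ${\O}_f \subset \overline{\mbox{Per}_f}$, which I would prove for any $f$ lying in the dense $G_\delta$ set provided by Theorem \ref{theoremLebesgue-a-e-isInShrinkingInterval}.

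Fix $\mu \in {\O}_f$ and $\e > 0$. First I would apply Lemma \ref{lemma-dist} to produce $q \ge 1$ such that any two probability measures supported on a pairwise disjoint family of at most $p$ closed intervals of length $< 1/q$, whose masses on each interval agree to within $1/(qp)$, are automatically at distance $< \e/2$. Since $\mu$ is pseudo-physical, $A_{\e/2}(\mu)$ has positive Lebesgue measure; by Theorem \ref{theoremLebesgue-a-e-isInShrinkingInterval}, Lebesgue-a.e.\ point lies in arbitrarily small eventually periodic shrinking intervals, so I can pick $x \in A_{\e/2}(\mu)$ sitting in an eventually periodic shrinking interval $J$ of length $< 1/q$. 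By the definition of eventually periodic shrinking, some iterate $f^n(J)$ is contained in a periodic shrinking interval $I$ of some period $p$ with $\mbox{length}(I) < \mbox{length}(J) < 1/q$, and by the remark preceding the theorem, $I$ contains a periodic point $x_0$ of period $p$.

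To conclude, set $K \defeq \bigcup_{j=0}^{p-1} f^j(\overline I)$ and $\mu_0 \defeq \frac{1}{p}\sum_{j=0}^{p-1} \delta_{f^j(x_0)} \in \mbox{Per}_f$. The forward orbit of $x$ past time $n$ lies entirely in $K$, so every $\nu \in p\omega(x)$ is $f$-invariant and supported in $K$; choose one such $\nu$ with $\mbox{dist}(\nu,\mu) < \e/2$, which exists by $x \in A_{\e/2}(\mu)$. Lemma \ref{lemmaInvMeasOnShrinkingIntervals} applied to $\nu$ and to $\mu_0$ shows that each of these measures assigns mass exactly $1/p$ to every $f^j(\overline I)$. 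Since the shrinking property gives $\mbox{length}(f^j(\overline I)) \le \mbox{length}(I) < 1/q$ for all $j$, Lemma \ref{lemma-dist} (with $m = p$ and the mass-difference bound trivially equal to $0 \le 1/(qp)$) yields $\mbox{dist}(\nu,\mu_0) < \e/2$. The triangle inequality then produces $\mbox{dist}(\mu,\mu_0) < \e$, so $\mu \in \overline{\mbox{Per}_f}$.

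The only delicate point I anticipate is a technicality in applying Lemma \ref{lemma-dist}: if $f$ happens to collapse some $f^j(I)$ to a single point, then $f^j(\overline I)$ has empty interior, violating the hypothesis that the intervals in the lemma have nonempty interiors. This is handled by slightly enlarging each degenerate component to a nondegenerate closed interval, keeping the enlarged family pairwise disjoint and of diameter still $< 1/q$, while preserving the mass $1/p$ that $\nu$ and $\mu_0$ each assign to it; alternatively, one checks that the mean-value-theorem argument inside the proof of Lemma \ref{lemma-dist} goes through verbatim on degenerate intervals. Modulo this minor point, the argument is a clean composition of Theorem \ref{theoremLebesgue-a-e-isInShrinkingInterval}, Lemma \ref{lemmaInvMeasOnShrinkingIntervals}, and Lemma \ref{lemma-dist}.
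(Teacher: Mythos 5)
Your proposal is correct and follows essentially the same route as the paper: fix $\mu\in\O_f$, use Theorem~\ref{theoremLebesgue-a-e-isInShrinkingInterval} to locate a point of the basin $A_{\e/2}(\mu)$ in an eventually periodic shrinking interval, pass to the associated periodic shrinking interval $I$ and its invariant set $K$, and then combine Lemma~\ref{lemmaInvMeasOnShrinkingIntervals} (which forces both the $p\omega$-limit and the periodic measure to put mass $1/p$ on each $f^j(\overline I)$) with Lemma~\ref{lemma-dist} to get the $\e$-approximation by a periodic measure. Your closing caveat about $f^j(\overline I)$ possibly degenerating to a point (so that the ``nonempty interior'' hypothesis of Lemma~\ref{lemma-dist} would fail) is a legitimate technicality that the paper's own proof silently elides, and your proposed fix by a small disjoint enlargement handles it.
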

\begin{proof}
Choose  $q_n \geq 1$ such that  any two  measures  satisfying the $q_n$-approach conditions of  Lemma \ref{lemma-dist}, are mutually at distance smaller than $1/n$.
Let $\mu \in \O_f$.
By the definition of pseudo-physical measure, the set $A_n \subset [0,1]$ of points $y$ such that \begin{equation}
\label{eqn09}
\mbox{dist}(p \omega (y), \mu)< 1/n, \end{equation} has positive Lebesgue measure. If $f$ is $\C$ typical, Lebesgue a.e.~$y_n \in A_n$ is contained in an arbitrarily small eventually periodic shrinking interval (Theorem \ref{theoremLebesgue-a-e-isInShrinkingInterval}). 
Therefore, there exists a  point $y_n \in A_n$ such that   any measure    $\mu_n \in p \omega({y_n})$ is supported on $K_n \defeq \bigcup_{j= 1}^{p_n} f^j(\overline I_n)$,  where $I_n$ is a periodic shrinking interval  of period $p_n$,  such that $\mbox{length}(I_n) \leq 1/q_n$. From the definition of shrinking interval,   and applying Lemma \ref{lemmaInvMeasOnShrinkingIntervals}, we have $\mu_n(f^j(\overline I)) = 1/p_n$.

Since the interval $I_n$ is shrinking periodic with period $p_n$, there exists at least one periodic orbit of period $p_n$ in $K_n$. Consider the atomic periodic invariant measure $\nu_n $ supported on this periodic orbit. It also satisfies $\nu_n(f^j(\overline I)) = 1/p_n$. So, applying Lemma \ref{lemma-dist} we deduce that
\begin{equation}
\label{eqn10} \forall \  y \in A_n,  \ \forall \  \mu_n \in p\omega(y), \  \exists \   \nu_n \in \mbox{Per}_{f}\colon \
 \mbox{dist}(\mu_n, \nu_n) \leq 1/n. \end{equation}

Combining Inequalities (\ref{eqn09}) and (\ref{eqn10}), we deduce that there exists    sequence of periodic atomic measures $\nu_n$ such that
 $\mbox{dist}(\nu_n, \mu) < 2/n$, finishing the proof of Theorem \ref{TheoremPeriodicSRB-l}.
\end{proof}

\section{Most invariant measures are not pseudo-physical.}
 The main purpose of this section is to  prove that for a fixed typical map  $f \in \C$, the set $\O_f$ of   pseudo-physical measures  is topologically meager in the set ${\mathcal M}_f$ of  all the $f$-invariant measures. Precisely, its complement ${\mathcal M}_f \setminus \O_f$ is open and dense in ${\mathcal M}_f$.

The following is an immediate corollary of  Theorem 3.6 and Proposition 4.1 of Abdenur and Anderson in \cite{AA}:

\begin{theorem}
 {\bf (Corollary of Abdenur-Anderson Theorem) } \label{theoremAA}

\noindent Let $f \in \C$ be a   $C^0 $- typical map. Then

\noindent {a) } for Lebesgue almost every point $x \in [0,1]$, the  sequence $$\Big \{\frac{1} {n}  \sum_{j=0}^{n-1} \delta_{f^j(x)}\Big\} _{n \in \mathbb{N}^+}$$ of empirical probabilities is convergent,

\noindent {b) } there do not exist SRB-measures, and

 \noindent {c) } the limit set in the space of probabilities of the sequence $$ \Big \{\frac{1} {n}  \sum_{j=0}^{n-1} {{f^*}^j(\mbox{Leb})}\Big\} _{n \in \mathbb{N}^+}$$ is strictly included in the set of $f$-invariant measures.
\end{theorem}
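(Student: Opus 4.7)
The plan is to derive each of (a), (b), (c) directly from the cited results of Abdenur and Anderson. Parts (a) and (b) are simply the specializations of [AA, Theorem 3.6] and [AA, Proposition 4.1] to the compact manifold $M=[0,1]$. Both results are proved on an arbitrary compact manifold, so no adaptation is required beyond noting that $[0,1]$ meets their hypotheses; accordingly I would restrict the proof of (a) and (b) to a one- or two-line reference to the source.

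The only real content is the deduction of (c) from (a). By (a), for Lebesgue-a.e.\ $x \in [0,1]$ the empirical measures $\mu_n(x) \defeq \tfrac1n \sum_{j=0}^{n-1} \delta_{f^j x}$ converge in the weak$^*$ topology to an $f$-invariant probability measure $\mu_\infty(x)$. For any continuous $\varphi\colon [0,1]\to\R$, Fubini together with the identity $\int \varphi\,d\!\left((f^*)^j \mathrm{Leb}\right) = \int \varphi \circ f^j\,d\mathrm{Leb}$ gives
\[
\int \varphi \, d\!\left(\tfrac1n\sum_{j=0}^{n-1}(f^*)^j \mathrm{Leb}\right) \;=\; \int_{[0,1]} \left(\tfrac1n\sum_{j=0}^{n-1} \varphi(f^j x)\right) d\mathrm{Leb}(x).
\]
The integrand on the right is uniformly bounded by $\|\varphi\|_\infty$ and converges Lebesgue-a.e.\ to $\int \varphi\,d\mu_\infty(x)$; dominated convergence therefore shows that the Cesàro averages of the pushforwards of Lebesgue converge in the weak$^*$ topology to the single measure $\bar\mu \defeq \int \mu_\infty(x)\,d\mathrm{Leb}(x)$, which is $f$-invariant as an integral of invariant measures. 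Hence the limit set appearing in (c) is the singleton $\{\bar\mu\}$, and this is a proper subset of $\mathcal M_f$ because Theorem \ref{theoremSRB-l-withZeroEntropy} already provides uncountably many distinct invariant Dirac measures on fixed points.

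There is no substantive obstacle: (a) and (b) are direct citations, and (c) is a standard dominated-convergence computation once (a) is in hand. The one technical point worth noting is the definition of $\bar\mu$ as an actual Borel probability measure, which is handled by setting $\int \varphi\,d\bar\mu \defeq \int \bigl( \int \varphi\,d\mu_\infty(x) \bigr)\,d\mathrm{Leb}(x)$ on continuous $\varphi$ and invoking the Riesz representation theorem; measurability of $x \mapsto \int \varphi\,d\mu_\infty(x)$ is immediate from the countable generation of the weak$^*$ topology displayed in (\ref{eqnDistanceOfMeasures}).
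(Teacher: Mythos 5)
The central difficulty of this statement is precisely the point your proposal dismisses: the Abdenur--Andersson results [AA, Theorem 3.6 and Proposition 4.1] are stated and proved for continuous maps on compact manifolds \emph{without boundary}, and the interval $[0,1]$ has boundary $\{0,1\}$. Your claim that ``both results are proved on an arbitrary compact manifold, so no adaptation is required beyond noting that $[0,1]$ meets their hypotheses'' is therefore false, and the whole content of the paper's proof is to repair exactly this defect. The paper identifies $\S^1 = [-1,1]/\!\sim$, extends a given $f \in \C$ to a continuous circle map $g$ with $g|_{[0,1]} = f$ (so orbits of points in $[0,1]$ remain in $[0,1]$ and the empirical measures agree), observes that an open set of $f$'s lifts to an open set of $g$'s, and concludes that properties typical for circle maps descend to properties typical for interval maps. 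Without such a reduction, citing [AA] does not establish (a) or (b), and since your derivation of (c) rests on (a), the entire argument collapses.

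That said, the way you deduce (c) from (a) is a genuinely different and in fact sharper route than the paper's (which simply cites [AA] for (c) as well). Your dominated-convergence computation shows that for a typical $f$ the Cesàro averages $\frac1n\sum_{j<n}(f^*)^j\mathrm{Leb}$ converge weakly to the single measure $\bar\mu = \int \mu_\infty(x)\,d\mathrm{Leb}(x)$, so the limit set in (c) is a singleton; strict inclusion then follows because $\mathcal M_f$ has more than one element (e.g.\ distinct Dirac measures on distinct fixed points, which exist typically, or via Theorem~\ref{theoremSRB-l-withZeroEntropy} as you cite). This is cleaner and more quantitative than a bare reference to [AA, Proposition 4.1], and the measurability/Riesz points you flag are handled correctly. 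If you repair the boundary issue --- either by importing the paper's circle-doubling trick or by giving your own argument that [AA]'s conclusions persist on $[0,1]$ --- the rest of the proof would stand.
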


\begin{proof}
In \cite[Theorem 3.6 and Proposition 4.1]{AA}   the above assertions a) b) and c) are proved for typical $C^0$ maps $g: M\mapsto M $, where $M$ is a manifold without boundary. In particular they hold  for a typical continuous map  $g$ of the circle $\S^1$.
Now, let us prove that they also hold for a typical continuous map $f$ of the interval. For any continuous map  $f \in \C$, we  consider $g: \S^1 \mapsto \S^1$ as follows.
First, take $\S^1 = [-1,1]/\sim$ where the equivalence relation $\sim$ identifies the extremes $-1$ and $1$.
Consider any $g: \S^1 \to \S^1$ such that  $g|_{[0,1]}= f$, $g(-1) = f(1)$.

An open set of $f \in \C$ yields an open set of $g \in \mathcal{C}(\S^1)$, thus if
 $g$ is typical in $\mathcal{C}(\S^1)$  then $f$ is typical in $\C$. Applying properties (a), (b) and (c) to   $g$, we deduce that $f$ also satisfies them.
\end{proof}

For any  $x \in [0,1]$ we define
\begin{equation}
\label{eqn-mu_x}
\mu_x \defeq \lim_{n \to \infty} \frac{1} {n}  \sum_{j=0}^{n-1} \delta_{f^j(x)}\end{equation} when this limit exists.
Let
\begin{equation*}
AA \defeq \{  x \in [0,1]: p\omega(x) = \{\mu_x\}\} \mbox{ and }
AA_1 \defeq \{  x \in AA: \mu_x \in \O_f\}.\end{equation*}

\begin{remark}
\label{remarkAA&AA_1} \em
For typical maps in $\C$,  Theorem \ref{theoremAA} a) tells us $$Leb(AA) = 1,$$ while Proposition 3.1 and Definition 3.2 of \cite{CE2} imply that $$Leb(AA_1) = 1.$$ \end{remark}

\begin{proposition} \label{propositionAA_1}
If $f $ is typical  in $\C$, then
$$\O_f = \overline{\{ \mu_x: x \in AA_1 \}},$$
and $\{\mu_x: x \in AA_1\}$ has no isolated points. Moreover, for any $\e > 0$ and any  $x_0 \in AA_1$ the set  $\{x \in AA_1: \mu_x \neq \mu_{x_0} \mbox{ and } \mbox{dist}(\mu_x, \mu_{x_0}) < \e\}$ has positive Lebesgue measure.
\end{proposition}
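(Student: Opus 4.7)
The plan is to handle the three assertions in order, observing up front that the quantitative positive-measure claim immediately implies the no-isolated-points claim. Two earlier facts will do most of the work: $\O_f$ is a closed subset of $\mathcal{M}_f$ (as noted inside the proof of Theorem \ref{theoremSRB-l-withZeroEntropy}), and, for a $C^0$-typical $f$, both $AA$ and $AA_1$ have full Lebesgue measure (Remark \ref{remarkAA&AA_1}).

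For the equality $\O_f = \overline{\{\mu_x : x\in AA_1\}}$ I would verify the two inclusions separately. The inclusion $\supseteq$ is immediate: by the definition of $AA_1$, every $\mu_x$ with $x \in AA_1$ lies in $\O_f$, and $\O_f$ is closed. For $\subseteq$, fix $\mu \in \O_f$ and $\e > 0$; by definition of pseudo-physical, $A_\e(\mu)$ has positive Lebesgue measure and hence meets the full-measure set $AA_1$. Any $x$ in this intersection satisfies $p\omega(x) = \{\mu_x\}$, so $\mbox{dist}(\mu_x, \mu) < \e$, which proves $\mu \in \overline{\{\mu_x : x \in AA_1\}}$.

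For the quantitative statement, fix $x_0 \in AA_1$ and $\e > 0$. Since $\mu_{x_0} \in \O_f$, the set $B \defeq A_\e(\mu_{x_0}) \cap AA_1$ has positive Lebesgue measure. The key step, and the only one with any content, is to show that $Z \defeq \{x \in AA_1 : \mu_x = \mu_{x_0}\}$ is Lebesgue-null: if instead $Leb(Z) > 0$, then every $x \in Z \subseteq AA$ would satisfy $p\omega(x) = \{\mu_x\} = \{\mu_{x_0}\}$, so $\mu_{x_0}$ would be a physical measure, contradicting Theorem \ref{theoremAA}(b). Consequently $B \setminus Z$ still has positive Lebesgue measure, and every $x$ in it lies in $AA_1$ with $\mu_x \ne \mu_{x_0}$ and $\mbox{dist}(\mu_x, \mu_{x_0}) < \e$; the no-isolated-points claim follows as an immediate corollary. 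The main obstacle is simply to recognise that ruling out $Leb(Z) > 0$ is precisely a physical-measure argument, so that Theorem \ref{theoremAA}(b) is the right tool to invoke; the rest reduces to a bookkeeping exercise with the definitions of $AA$, $AA_1$, $A_\e(\mu)$, and of physical versus pseudo-physical measures.
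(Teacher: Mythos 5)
Your proposal is correct and follows essentially the same approach as the paper: both directions of the equality rely on the closedness of $\O_f$ and on $Leb(AA_1)=1$, and the quantitative claim is established by observing that $Leb\{x\in AA_1:\mu_x=\mu_{x_0}\}>0$ would make $\mu_{x_0}$ a physical measure, contradicting Theorem \ref{theoremAA}(b). The only stylistic difference is that you phrase the key step directly (first showing $Z$ is Lebesgue-null, then intersecting) whereas the paper runs the same argument by contradiction; the mathematical content is identical, and your derivation of the no-isolated-points claim from the quantitative one is also the intended reading.
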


\begin{proof}  By definition of the set $AA_1$ we have
$\{\mu_x: x \in AA_1\}  \subset \O_f$. Besides, since $\O_f$ is closed in the weak$^*$ topology (Theorem 1.3 in \cite{CE1}) we have
 $\overline{\{\mu_x: x \in AA_1\}}  \subset \O_f$.

 We turn to the other inclusion.  Suppose $\mu \in \O_f$, i.e.\
 $$Leb\{ x \in [0,1]: \mbox{dist}(p\omega(x),\mu) < \e) > 0\} \text{ for all } \e > 0.$$
Since $Leb(AA_1) =1$ and $p\omega(x) = \mu_x$ for $x \in AA_1$,
this is equivalent to
  $$Leb\{ x \in AA_1: \mbox{dist}(\mu_x,\mu) < \e) > 0\} \text{ for all } \e > 0.$$
Thus $\mu \in \overline{\{ \mu_x: x \in AA_1 \}}$. Since $\mu \in \O_f$ is arbitrary we conclude
$$\O_f \subset \overline{\{ \mu_x: x \in AA_1 \}}.$$

Now, let us prove the last assertion of Proposition \ref{propositionAA_1}. 
By contradiction, assume that there exists $x_0 \in AA_1$ and $\e >0$ such that \begin{equation}
\label{eqn08}
\mbox{Leb}\{x \in AA_1\colon \mu_x \neq \mu_{x_0} \mbox{ and } \mbox{dist}(\mu_{x_0}, \mu_x) < \e\} = 0.\end{equation}
Since $\mu_{x_0}$ is pseudo-physical,  by definition we have
$$\mbox{Leb}(\{x \in [0,1]\colon \mbox{dist}(p\omega x, \mu_{x_0}) < \e\}) >0.$$
From Remark \ref{remarkAA&AA_1}  we deduce that
$$\mbox{Leb}(\{x \in AA_1\colon \mbox{dist}(\mu_x, \mu_{x_0}) < \e\}) >0.$$
Combining this last assertion with Inequality (\ref{eqn08}), we obtain that
$$\mbox{Leb}(\{x \in AA_1\colon  \mu_x = \mu_{x_0}) \}) >0.$$
So, $\mu_{x_0}$ is an SRB measure, contradicting part b) of Theorem \ref{theoremAA}.
\end{proof}

\begin{lemma} \label{Lemma-ConvexCombination}
Suppose $f \in \C$ is a typical map,
 $\mu_1$ is an $f$-invariant measure supported on  $K = \bigcup_{j= 1}^{p} f^j(\overline I)  $, where $  I$ is a periodic shrinking interval of period $p $, and if $\mu_2$ is another $f$-invariant measure whose compact support is contained in the complement of $K$, then no convex combination $ \nu= \lambda \mu_1 + (1 - \lambda) \mu_2$, with $0 < \lambda < 1$, is pseudo-physical.
\end{lemma}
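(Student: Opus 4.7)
The plan is to separate $K$ from $\mbox{supp}(\mu_2)$ by a continuous test function $\phi$ so that $\int\phi\,d\nu=\lambda$, and then to derive a contradiction using Theorem \ref{TheoremPeriodicSRB-l}: approximating $\nu$ by atomic periodic invariant measures $\rho_n$, a trapping property of $K$ will force $\int\phi\,d\rho_n=0$ for all large $n$, contradicting $\int\phi\,d\rho_n\to\lambda>0$.

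The key preparatory step is a \emph{trapping lemma}: there exists $\delta^*>0$, with $\delta^*<d_0\defeq d(K,\mbox{supp}(\mu_2))$, such that $d(y,K)<\delta^*$ implies $f^m(y)\in K$ for some $1\le m\le p$. To construct $\delta^*$, I will use that $I$ is open and $f^p(\overline I)\subset I$, so $\rho\defeq d(f^p(\overline I),[0,1]\setminus I)>0$; together with the uniform continuity of each map $f^{p-k}$ for $0\le k\le p-1$ and the positive gap $\eta>0$ between the pairwise disjoint pieces $f^j(\overline I)$, I choose $\delta^*\in(0,\min(d_0,\eta/2))$ so that every $y$ with $d(y,K)<\delta^*$ lies near a unique piece $f^k(\overline I)$, and then $f^{p-k}(y)$ falls within the $\rho$-neighborhood of $f^p(\overline I)$, hence inside $I\subset K$.

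Next, I pick a continuous $\phi\colon[0,1]\to[0,1]$ with $\phi\equiv 1$ on $K$ and $\phi\equiv 0$ outside the $(\delta^*/3)$-neighborhood of $K$. Since $\delta^*/3<d_0$, $\phi\equiv 0$ on $\mbox{supp}(\mu_2)$, whence $\int\phi\,d\nu=\lambda\cdot 1+(1-\lambda)\cdot 0=\lambda$. Assuming $\nu\in\O_f$ for contradiction, Theorem \ref{TheoremPeriodicSRB-l} furnishes $\rho_n\in\mbox{Per}_f$ with $\rho_n\to\nu$, each supported on a periodic orbit $O_n$. The forward invariance $f(K)\subset K$ combined with periodicity forces $O_n\subset K$ or $O_n\cap K=\emptyset$, hence $\rho_n(K)\in\{0,1\}$; Portmanteau applied to the closed set $K$ gives $\limsup_n\rho_n(K)\le\nu(K)=\lambda<1$, so $\rho_n(K)=0$ for all large $n$. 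By the trapping lemma, $O_n$ is then disjoint from the $\delta^*$-neighborhood of $K$ (otherwise the orbit would enter $K$), hence from $\mbox{supp}(\phi)$, so $\int\phi\,d\rho_n=0$ eventually, contradicting $\int\phi\,d\rho_n\to\lambda>0$.

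The main technical obstacle is the trapping lemma: forward invariance $f(K)\subset K$ alone does not make any $\delta$-neighborhood of $K$ forward invariant in one step, since the modulus of continuity of $f$ is not controlled by $\delta$. One really needs to iterate $p$ times and exploit the \emph{strict} inclusion $f^p(\overline I)\subset I$, which yields a positive ``safety margin'' $\rho$ large enough to absorb the modulus of continuity of each $f^{p-k}$; without this strict inclusion the argument would collapse.
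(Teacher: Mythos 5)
Your proof is correct, but it takes a genuinely different route from the paper's, so it is worth comparing.

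The paper argues directly with the Birkhoff limit measures $\mu_x$ (which exist Lebesgue-a.e.\ by Theorem~\ref{theoremAA}~a)): it builds a test function $\psi$ supported in $I$ with $\psi\equiv 1$ on $f^p(\overline I)$, observes via Lemma~\ref{lemmaInvMeasOnShrinkingIntervals} that $\int\psi\,d\mu_x$ can only take the two values $0$ or $1/p$ (depending on whether the orbit of $x$ eventually enters $I$, hence gets trapped in $K$), notes that $\int\psi\,d\nu=\lambda/p$ lies strictly between, and concludes $A_\e(\nu)=\emptyset$ for a suitable $\e$. Your proof instead invokes Theorem~\ref{TheoremPeriodicSRB-l} ($\O_f\subset\overline{\mathrm{Per}_f}$) as a black box to approximate $\nu$ by periodic measures $\rho_n$, then exploits the forward invariance of $K$ to get the dichotomy $\rho_n(K)\in\{0,1\}$, uses Portmanteau to eliminate the value $1$, and finally uses your trapping lemma to push $\int\phi\,d\rho_n$ to $0$, contradicting $\int\phi\,d\rho_n\to\lambda$. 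Both arguments are in essence a ``forbidden intermediate value'' argument applied to a $\{0,1\}$-type law forced by the periodic shrinking interval; the paper establishes it for the Birkhoff measures $\mu_x$ directly, while you establish it for the periodic approximants $\rho_n$, at the cost of leaning on the substantially deeper Theorem~\ref{TheoremPeriodicSRB-l}.

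One remark that would simplify your argument: the trapping lemma and the $\delta^*$-neighborhood of all of $K$ are avoidable. If you instead support $\phi$ inside $I$ (as the paper does with $\psi$), with $\phi\equiv 1$ on $f^p(\overline I)$, then any periodic orbit $O_n$ meeting $\{\phi>0\}\subset I$ satisfies $f(O_n\cap I)\subset f(\overline I)\subset K$, hence $O_n\subset K$ by forward invariance of $K$ and periodicity; so $\int\phi\,d\rho_n>0$ forces $\rho_n(K)=1$, and the Portmanteau step immediately gives $\int\phi\,d\rho_n=0$ eventually, while $\int\phi\,d\nu=\lambda/p>0$. This removes the need for the uniform-continuity bookkeeping and the safety margin $\rho$. (Minor point: the paper's indexing of $K$ wobbles between $\bigcup_{j=1}^{p}$ and $\bigcup_{j=0}^{p-1}$; your claim $I\subset K$ requires the latter, which is what the paper's own proof uses, so this is not an error on your part.)
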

\begin{proof}
 Since $I$ is shrinking periodic with period $p$, it is an open interval and   $f^p(\overline I) \subset I $. Construct a continuous function $\psi$ such that $\psi|_{f^p(\overline I)}= 1$, $0 < \psi(x) <1$ if $x  \in I \setminus f^p(\overline I) $ and $\psi(x) = 0$ if $x \not \in I$. Since $\mu_1 $ is supported on $K =   \bigcup_{j= 0}^{p-1} f^j(\overline I) $,  we can apply Lemma \ref{lemmaInvMeasOnShrinkingIntervals} to deduce that $$\mu_1(\overline I) = \mu_1 (f^p(\overline I)) = \frac{1}{p}; \ \ \mbox{ hence }  \mu_1 (I \setminus f^p(\overline I)) = 0.  $$ We obtain $
  \int \psi \, d \mu_1 = \mu_1 (f^p(\overline I)) =  {1}/{p} $ and $ \int \psi \, d \mu_2 = \mu_2 (I) = 0.$
  Therefore
 $ 0 <\int \psi \, d \nu =  \lambda /p  <  {1}/{p}.  $

Choose $\e >0$  such that for any measure $\mu$, if $\mbox{dist}(\nu, \mu) < \e$, then $\int \psi \, d \mu > 0$ and $\int \psi \, d \mu <   1 /p. $
Consider the set $$A_{\e}(\nu) \defeq \{ x \in AA: \mbox{ dist} (\mu_x, \nu) < \e\}.$$
We claim that  the set $A_{\e} (\nu)$ is empty. Arguing by contradiction, assume that there exists $x \in A_{\e}(\nu)$.
Then,  from the choice of $\e$, we have   \begin{equation}
   \label{eqn07}0 <\int \psi \, d \mu_x < 1/p, \end{equation} and  from (\ref{eqn-mu_x}) we deduce that there exists $n_0 \geq 1$ such that $$ \frac{1}{n} \sum_{j= 0} ^{n-1} \psi (f^j(x)) = \int \psi \, \Big (\frac{1}{n} \sum_{j= 0}^{n-1} \delta_{f^j(x)}
   \Big )  >0 \ \forall \ n \geq n_0.$$
Thus,  there exists $n_1 \geq 1$ such that $\psi(f^{n_1}(x))    >0$; hence $f^{n_1}(x) \in I$. This implies that  the future orbit of $f^{n_1}{x}$ is contained in $K$. Hence,  $\mu_x$ is supported on $K$. Since $\mu_x$ is $f$-invariant, we apply Lemma \ref{lemmaInvMeasOnShrinkingIntervals} to deduce that $\mu_x (f^p(\overline I)) = 1/p$ and $\mu_x(I \setminus f^p(\overline I)) = 0$. Therefore,
$$\int \psi \, d \mu_x = \mu_x(f^p(\overline I)) = \frac{1}{p},$$
contradicting  (\ref{eqn07}). We have proved that $A_{\e} (\nu) $ is   empty.

For a typical map in $\C$,
 the definition of $\mu$ being pseudo-physical  and   Remark \ref{remarkAA&AA_1}, imply that for all $\e >0$ we have   $$\mbox{Leb}(A_{\e}(\mu)) >0. $$   Since in our case $A_{\e}(\nu) = \emptyset$, we conclude that $\nu$ is not a pseudo-physical measure if $f$ is a typical map in $\C$.
\end{proof}

In a Baire-space, we say that a set $A$ is \em topologically meager  \em if it is the countable union of nonempty closed sets with empty interiors; and
 $A$ is \em less than topologically meager \em if it is closed with empty interior.
\begin{theorem}
\label{theoremSRB-l_Meager}
If $f$ is typical in $ \C$, then the set  $\O_f$  of pseudo-physical measures is less than topologically meager.
\end{theorem}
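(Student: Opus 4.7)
The plan is to observe that closedness of $\mathcal{O}_f$ is already established (Theorem 1.3 of \cite{CE1}, as cited in the proof of Proposition \ref{propositionAA_1}), so what remains is to show that $\mathcal{O}_f$ has empty interior in $\mathcal{M}_f$ equipped with the weak$^*$ topology. For this I would fix an arbitrary $\mu \in \mathcal{O}_f$ and $\e > 0$, and construct an $f$-invariant measure $\nu$ with $\mbox{dist}(\mu,\nu) < \e$ and $\nu \notin \mathcal{O}_f$.

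First I would invoke Theorem \ref{TheoremPeriodicSRB-l}: its proof actually produces a sequence of periodic atomic measures $\nu_n$ approximating $\mu$, where each $\nu_n$ is supported on a periodic orbit lying inside the set $K_n = \bigcup_{j=0}^{p_n-1} f^j(\overline{I_n})$ associated to a periodic shrinking interval $I_n$ of period $p_n$ and length at most $1/q_n$. Choosing $n$ large, I can extract from this construction a measure $\mu_1 \in \mbox{Per}_f$ with $\mbox{dist}(\mu,\mu_1) < \e/2$ and with $\mbox{supp}(\mu_1) \subset K$, where $K = \bigcup_{j=0}^{p-1} f^j(\overline I)$ for some periodic shrinking interval $I$ of period $p$. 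Since $K$ is a finite union of closed intervals of total length strictly less than $1$ (in fact arbitrarily small), and Theorem \ref{theoremSRB-l-withZeroEntropy} supplies uncountably many fixed points of $f$, there exists at least one fixed point $x_0 \in [0,1] \setminus K$; set $\mu_2 \defeq \delta_{x_0}$, an $f$-invariant measure whose support lies in the complement of $K$.

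Next, for small $\lambda \in (0,1)$ I would consider the $f$-invariant convex combination
\[
\nu \defeq (1-\lambda)\mu_1 + \lambda \mu_2.
\]
Since the distance is linear in the argument, $\mbox{dist}(\mu_1,\nu) = \lambda \, \mbox{dist}(\mu_1,\mu_2) \le 2\lambda$ (bounding each term $|\int \Psi_i \, d\mu_1 - \int \Psi_i \, d\mu_2| \le 2\|\Psi_i\|_\infty$ and using $\sum 2^{-i}=1$). Choosing $\lambda < \e/4$ gives $\mbox{dist}(\mu,\nu) \le \mbox{dist}(\mu,\mu_1) + \mbox{dist}(\mu_1,\nu) < \e$. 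By Lemma \ref{Lemma-ConvexCombination}, applied to $\mu_1$ supported on $K$ and $\mu_2$ supported in the complement of $K$, the convex combination $\nu$ is not pseudo-physical. Therefore every weak$^*$-neighborhood of $\mu$ in $\mathcal{M}_f$ contains an invariant measure outside $\mathcal{O}_f$, proving $\mathcal{O}_f$ has empty interior.

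The main obstacle I anticipate is ensuring that the approximating periodic measure $\mu_1$ is supported exactly on the orbit of a periodic shrinking interval, rather than just close to one, so that the hypothesis of Lemma \ref{Lemma-ConvexCombination} holds. This is not explicit in the statement of Theorem \ref{TheoremPeriodicSRB-l} but is built into its proof, so a careful reference back to that construction (rather than to the statement alone) is what makes the argument go through.
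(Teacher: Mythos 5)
Your overall plan -- approximate $\mu$ by a periodic measure $\mu_1$ supported on the orbit $K$ of a periodic shrinking interval, pick an invariant $\mu_2$ supported off $K$, and feed the two into Lemma \ref{Lemma-ConvexCombination} -- is a genuinely different decomposition from the paper's. The paper instead produces \emph{two} measures $\mu_x\neq\mu_y$, both $\e$-close to $\mu$ and both supported on orbits of small periodic shrinking intervals, and argues those orbits must be disjoint because otherwise Lemmas \ref{lemmaInvMeasOnShrinkingIntervals} and \ref{lemma-dist} would force $\mu_x=\mu_y$. Your route has the appeal of avoiding the ``two nearby measures'' step, but this is exactly where it breaks down.

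The gap is in producing $\mu_2$. You assert that since $K$ has Lebesgue measure $<1$ and Theorem \ref{theoremSRB-l-withZeroEntropy} gives uncountably many fixed points, one of those fixed points lies outside $K$. This does not follow: an uncountable set can have Lebesgue measure zero and sit entirely inside a single small interval, and the fixed points produced in Theorem \ref{theoremSRB-l-withZeroEntropy} are in fact constructed as limits of clusters of fixed points within intervals of shrinking length, so nothing prevents all of $\mathcal{F}$ from lying in a tiny interval. Worse, there is a concrete obstruction in the period-one case: if $p=1$, then $K=\overline I$, and it is perfectly consistent for $\overline I$ to be a global attractor (e.g.\ $f(x)>x$ on $[0,a]$ and $f(x)<x$ on $[b,1]$ where $\overline I=[a,b]$), in which case \emph{every} $f$-invariant measure is supported inside $K$ and no choice of $\mu_2$ exists. (When $p\geq 2$ the argument that $K$ contains no fixed point does go through, via Lemma \ref{lemmaInvMeasOnShrinkingIntervals} -- a Dirac at a fixed point in $K$ would have to assign mass $1/p$ to each $f^j(\overline I)$, which is impossible -- but the $p=1$ case is not covered.) The paper's construction sidesteps this by keeping both measures inside the same small region and exploiting Proposition \ref{propositionAA_1} (non-isolation of $\{\mu_x:x\in AA_1\}$) together with Theorem \ref{theoremLebesgue-a-e-isInShrinkingInterval} to force the two supporting orbits to be disjoint; repairing your argument would essentially require the same move, so the gap is not cosmetic.

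Two smaller remarks: the parenthetical ``(in fact arbitrarily small)'' for $\mbox{Leb}(K)$ is unjustified, since $K$ has length at most $p_n/q_n$ and the period $p_n$ is not controlled as $q_n\to\infty$; and the linearity computation $\mbox{dist}(\mu_1,\nu)=\lambda\,\mbox{dist}(\mu_1,\mu_2)$ is correct and the bound $\mbox{dist}(\mu_1,\mu_2)\leq 1$ (since $\Psi_i$ takes values in $[0,1]$) is even a bit better than your $\leq 2$.
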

\begin{proof}
From \cite[Theorem 1.3]{CE1} the set $\O_f$ is closed. Now, let us prove that  its interior  in ${\mathcal M}_f$ is empty. Fix $\mu \in \O_f$ and fix $\e >0$. From Proposition \ref{propositionAA_1}, we can find $x_0 \in AA_1$ such that $\mbox{dist} (\mu_{x_0},\mu) < \e/2$, and there exists a positive Lebesgue measure set of $x \in AA_1$ such that $\mbox{dist}(\mu_x, \mu_{x_0}) < \e/2$, and thus by the triangle inequality
the set    \begin{equation}
\label{eqn12}
A'_\e(\mu) \defeq \{x \in AA_1:   \mbox{dist}(\mu_x, \mu) < \e\} \end{equation}
has positive Lebesgue measure.

Applying Theorem \ref{theoremLebesgue-a-e-isInShrinkingInterval}, there exists $x \in A'_\e(\mu)$   belonging to an arbitrary small eventually periodic shrinking interval. Hence $\mu_x$ is supported in the $f$-orbits of arbitrarily small periodic shrinking intervals. Applying Proposition \ref{propositionAA_1}, there exists a positive Lebesgue measure set of  $y \in A'_\e(\mu)$  with $\mu_y \neq \mu_x$, and such that $\mu_y$ is also supported on the $f$-orbits of arbitrary small periodic shrinking intervals. Since $\mu_x \neq \mu_y$ we claim that there   exist two periodic shrinking intervals $I_x$ and $I_y$ whose $f$-orbits support $\mu_x$ and $\mu_y$ respectively, and are mutually disjoint. If not, Lemmas \ref{lemmaInvMeasOnShrinkingIntervals} and \ref{lemma-dist}, would imply that the distance between $\mu_x$ and $\mu_y$  is arbitrarily small. Hence both measures would coincide, contradicting our choice of them.

Thus, we can apply Lemma \ref{Lemma-ConvexCombination} to deduce that the convex combination $\nu = \lambda \mu_1 + \lambda \mu_2$, with $0 < \lambda <1$, is non pseudo-physical.  But $\mbox{dist} (\nu, \mu) \leq \lambda \mbox{dist}(\mu_1, \mu) +(1 - \lambda) \mbox{dist}(\mu_2 , \mu) < \e$. We have proved that any pseudo-physical measure is accumulated by non pseudo-physical measures. In other words, the interior of the set $\O_f$ is empty.
\end{proof}

\section{pseudo-physical measures and ergodicity.}

A \em  $\delta$-pseudo-orbit of $f$ \em is a sequence $\{y_n\}_{n \in  \mathbb{N}} \subset [0,1]$ such that $$\mbox{dist}(f(y_n), y_{n+1} ) < \delta \ \ \forall \ n \in \mathbb{N}.$$
A \em $\delta$-pseudo-orbit  $\{y_n\}_{n \in  \mathbb{N}} $ is  periodic \em with period $p \geq 1$, if $$y_{n+p}= y_n \ \ \ \forall \ n \in \mathbb{N}.$$
A  map $f \in \C$ has the   \emph{periodic shadowing property} if for all $\e > 0$, there exists $\delta>0$ such that,  if $\{y_n\}_{n \in \mathbb{N}}$ is any periodic $\delta$-pseudo-orbit, then, at least one  periodic orbit $\{f^n(x)\}_{n \in \mathbb{N}}$ satisfies
$$d(f^n(x), y_n) < \e \ \ \ \forall \ n \in \mathbb{N}.$$

\begin{theorem}
\label{TheoremOprocha&als} {\bf (Ko\'{s}cielniak--Mazur--Oprocha--Pilarczyk)}

\noindent A typical map $f \in \C$ has the   periodic shadowing property.
\end{theorem}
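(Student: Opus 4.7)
The plan is to show that the set of maps in $\C$ having the periodic shadowing property is a dense $G_\delta$. For each $q, k \in \N^+$ define
$$\mathcal P_{q,k} \defeq \{f \in \C : \text{every periodic } (1/k)\text{-pseudo-orbit of } f \text{ is } (1/q)\text{-shadowed by an } f\text{-periodic orbit}\}.$$
The maps with periodic shadowing are exactly $\bigcap_{q \ge 1}\bigcup_{k \ge 1}\mathcal P_{q,k}$, so it suffices to exhibit, for each $q \ge 1$, an open dense set $\mathcal Q_q \subset \bigcup_{k}\mathcal P_{q,k}$.

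For density I would reuse the perturbation machinery already established in the proof of Theorem \ref{theoremLebesgue-a-e-isInShrinkingInterval}. Given $f \in \C$ and $\e > 0$, partition $[0,1]$ into equal subintervals $J_1, \ldots, J_N$ with $1/N < \min(\e, 1/q)$ and build the continuous piecewise-affine perturbation $g$ with $\mbox{dist}(g,f) < 3\e$, together with open ``plateau'' subintervals $I_i \subset J_i$ on which $g$ equals the constant $x_{j(i)}$ (the midpoint of $J_{j(i)}$), with the widths of the transition strips $J_i \setminus I_i$ made as small as desired. The combinatorial map $j\colon\{1,\dots,N\} \to \{1,\dots,N\}$ then captures the essential dynamics: every cycle $i_0 \to i_1 \to \cdots \to i_{p-1} \to i_0$ of $j$ produces a genuine $g$-periodic orbit $\{x_{i_0},\dots,x_{i_{p-1}}\}$ sitting in the interior of a periodic shrinking interval for $g$.

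Now pick $\delta = 1/k$ smaller than the distance from each $x_{j(i)}$ to $\partial I_{j(i)}$, and also smaller than the widths $\mbox{length}(J_i \setminus I_i)$. For any periodic $\delta$-pseudo-orbit $\{y_n\}$ of $g$ whose points all lie in the plateau region $\bigcup_i I_i$, letting $i(n)$ be determined by $y_n \in I_{i(n)}$, one has $g(y_n) = x_{j(i(n))}$ exactly, and so $|y_{n+1} - x_{j(i(n))}| < \delta$ forces $y_{n+1} \in I_{j(i(n))}$, i.e.\ $i(n+1) = j(i(n))$. The index sequence is thus a periodic orbit of the finite system $j$, and the corresponding genuine $g$-periodic orbit $\{x_{i(n)}\}$ shadows $\{y_n\}$ to within $\mbox{diam}(J_{i(n)}) = 1/N < 1/q$. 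Pseudo-orbits that enter the transition strips $J_i \setminus I_i$ are handled by shrinking $\delta$ further: because on each transition strip $g$ is affine with large slope (since the strip is thin), any periodic point trapped on such a strip is a genuine $g$-periodic orbit of the affine piece and directly shadows nearby pseudo-orbits; pseudo-orbits that only transit a strip quickly return to the plateau region, at which point the preceding argument applies.

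The main obstacle is verifying openness, since small $C^0$ perturbations can in general destroy periodic orbits. The way around this is to note that the periodic orbits constructed above lie strictly inside periodic shrinking intervals for $g$, and by the robustness of periodic shrinking intervals (Section 4), a sufficiently small neighborhood of $g$ in $\C$ inherits the same combinatorial data $(N, \{J_i\}, \{I_i\}, j)$, the same family of genuine periodic orbits (merely slightly perturbed), and hence a shadowing parameter $1/k' \le 1/k$ that still works. Thus each such $g$ has an open $\C$-neighborhood contained in some $\mathcal P_{q,k'}$; the union $\mathcal Q_q$ of these neighborhoods over the dense family of perturbations produced above is the desired open dense subset of $\bigcup_{k}\mathcal P_{q,k}$, finishing the argument.
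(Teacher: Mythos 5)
The paper does not actually prove this statement; it only cites \cite{Oprocha&als}, whose Theorem~1.2 gives the (full) shadowing property for typical $f \in \C$, from which periodic shadowing follows. So your self-contained argument is by definition a different route. Its outer framework is sound: writing the set of periodic-shadowing maps as $\bigcap_q \bigcup_k \mathcal P_{q,k}$ and seeking an open dense set $\mathcal Q_q \subset \bigcup_k \mathcal P_{q,k}$ is the right shape for a dense-$G_\delta$ argument, and the plateau case is correct --- once a periodic $\delta$-pseudo-orbit enters some plateau $I_i$, its index sequence is locked onto the finite combinatorial map $j$, hence traces a cycle of $j$, which is exactly a genuine $g$-periodic orbit of midpoints shadowing to within $1/N < 1/q$.

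The gap is the treatment of pseudo-orbits passing through the transition strips $J_i \setminus I_i$, and two of the claims made there do not hold as stated. First, ``large slope on a thin strip gives a genuine $g$-periodic point on that strip'': a large-slope affine branch need not intersect the diagonal at all, so a strip may contain no periodic point of $g$; moreover, when $j(i) = j(i+1)$ the slope on $J_i\setminus I_i$ is zero, not large. Second, ``pseudo-orbits that only transit a strip quickly return to the plateau region'': the $g$-image of $J_i \setminus I_i$ is the entire interval between $x_{j(i)}$ and $x_{j(i+1)}$, whose length can be of order $1$ even though the strip itself has length $< 1/(Nk)$; this image can contain many other strips, so a periodic $\delta$-pseudo-orbit can in principle hop from strip to strip indefinitely without ever landing in a plateau. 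For this class of pseudo-orbits you have neither produced a genuine nearby periodic orbit nor verified the $1/q$ shadowing estimate, so the case requires a real argument (for instance a finer choice of strip positions relative to the midpoints $x_{j(i)}$, or an iterated perturbation that eliminates strip-to-strip transitions) rather than the assertion given. By contrast, your openness discussion is essentially fine once you make explicit the standard step of viewing a $\delta$-pseudo-orbit of $g'$ as a $(\delta + \mbox{dist}(g,g'))$-pseudo-orbit of $g$, so that the same $\delta$ works throughout a small neighborhood.
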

\begin{proof}
See \cite[Theorem 1.2]{Oprocha&als}
\end{proof}

Recall that ${\mathcal E}_f$ denotes the set of ergodic measures, and $\mbox{Per}_{f}$ denotes the set of atomic invariant measures supported on periodic orbts of $f$.
As a consequence of Theorem \ref{TheoremOprocha&als}:
\begin{corollary}
\label{corollaryOprocha&als}
For a typical map $f \in \C$,   $\mathcal E_f \subset \overline{\mbox{Per}_{f}}. $
\end{corollary}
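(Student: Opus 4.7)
The strategy is to combine the periodic shadowing property of Theorem \ref{TheoremOprocha&als} with Birkhoff's ergodic theorem and Poincar\'e recurrence. Given an ergodic $\mu$, I will exhibit a long orbit segment of a $\mu$-generic point whose empirical measure approximates $\mu$ and whose endpoint nearly returns to its start, then close it up into a periodic $\delta$-pseudo-orbit, and finally shadow it by a true periodic orbit; the atomic invariant measure on that orbit will be the desired element of $\overline{\mbox{Per}_f}$.

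Fix $\mu \in \mathcal{E}_f$ and $\e > 0$. First I would fix the thresholds: using the uniform continuity of the finitely many test functions $\Psi_1,\dots,\Psi_N$ that dominate the tail in \eqref{eqnDistanceOfMeasures} (as in the proof of Lemma \ref{lemma-dist}), choose $\eta > 0$ so that whenever two finite sequences $\{z_j\}_{j=0}^{n-1}$ and $\{w_j\}_{j=0}^{n-1}$ satisfy $d(z_j,w_j) < \eta$ for all $j$, their empirical measures are within $\e/3$ in the $\mbox{dist}$ metric. Then apply Theorem \ref{TheoremOprocha&als} to select $\delta \in (0,\eta)$ such that every periodic $\delta$-pseudo-orbit is $\eta$-shadowed by a genuine periodic orbit (whose period divides that of the pseudo-orbit, which is the standard convention for periodic shadowing).

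Next, invoke Birkhoff's ergodic theorem together with Poincar\'e recurrence: for $\mu$-a.e.\ $x \in [0,1]$, both $\tfrac{1}{n}\sum_{j=0}^{n-1}\delta_{f^j(x)} \to \mu$ weakly and $\liminf_{n \to \infty} d(f^n(x),x) = 0$. Pick such an $x$ and an integer $n$ large enough that simultaneously
\[
  \mbox{dist}\Big(\tfrac{1}{n}\sum_{j=0}^{n-1}\delta_{f^j(x)},\ \mu\Big) < \e/3 \quad \text{and} \quad d(f^n(x),x) < \delta.
\]
Define $y_{kn+j} \defeq f^j(x)$ for $0 \le j < n$ and $k \ge 0$; this is a periodic $\delta$-pseudo-orbit of period $n$, since inside each block the consecutive distances vanish and the only nontrivial ``jump'' $d(f(y_{n-1}),y_0) = d(f^n(x),x)$ is bounded by $\delta$. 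Periodic shadowing yields a periodic point $z$ whose period $q$ divides $n$ such that $d(f^j(z), y_j) < \eta$ for every $j$. Put $\nu \defeq \tfrac{1}{q}\sum_{j=0}^{q-1}\delta_{f^j(z)} \in \mbox{Per}_f$; since $q \mid n$, this equals $\tfrac{1}{n}\sum_{j=0}^{n-1}\delta_{f^j(z)}$. By the choice of $\eta$, this is within $\e/3$ of $\tfrac{1}{n}\sum_{j=0}^{n-1}\delta_{f^j(x)}$, and the triangle inequality gives $\mbox{dist}(\nu,\mu) < \e$, proving $\mu \in \overline{\mbox{Per}_f}$.

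The only genuine subtlety in the plan is ensuring that the shadowing periodic orbit has period dividing the pseudo-orbit period $n$, so that $\nu$ coincides with the uniform average over the $n$ shadow points $f^j(z)$; this is built into the usual periodic shadowing property (and if one wanted a softer version, one could instead choose $n$ far exceeding the maximum period of shadowing orbits at that $\delta$-scale, and use that the uniform average over $n$ steps of any shorter periodic orbit converges to the uniform average over one of its cycles). The remaining bookkeeping---translating $\eta$-closeness of two orbits into weak$^\ast$-closeness of empirical measures---is exactly the content already used in Lemma \ref{lemma-dist}.
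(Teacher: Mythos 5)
Your overall strategy is the same as the paper's: use Birkhoff plus Poincar\'e recurrence to find a long, almost-closed orbit segment of a $\mu$-generic point whose empirical measure is close to $\mu$, close it into a periodic $\delta$-pseudo-orbit, apply the Ko\'scielniak--Mazur--Oprocha--Pilarczyk periodic shadowing theorem (Theorem \ref{TheoremOprocha&als}) to get a true periodic orbit nearby, and then compare empirical measures. The selection of $\eta$ and the triangle inequality bookkeeping all match the paper.

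There is, however, a genuine gap in how you handle the period of the shadowing orbit. You assert that the period $q$ of the shadowing point $z$ divides the period $n$ of the pseudo-orbit, calling this \lq\lq the standard convention for periodic shadowing.\rq\rq\ That divisibility is \emph{not} part of the definition of periodic shadowing used in this paper, and your whole computation of $\nu$ as a uniform average over $n$ shadow points relies on it. Your fallback --- \lq\lq choose $n$ far exceeding the maximum period of shadowing orbits at that $\delta$-scale\rq\rq\ --- is circular: the shadowing orbit (and hence $q$) is only produced \emph{after} you have fixed $n$, and nothing in the hypothesis gives an a priori bound on $q$ in terms of $\delta$. The paper avoids the issue cleanly: since the shadowing estimate $d(f^j(z),y_j)<\e$ holds for \emph{all} $j\ge 0$, one can average over $qp$ steps (where $p$ is the pseudo-orbit period and $q$ the period of $z$), and use the exact identity
\[
\nu=\frac1q\sum_{j=0}^{q-1}\delta_{f^j(z)}=\frac1{qp}\sum_{j=0}^{qp-1}\delta_{f^j(z)},
\]
together with the $p$-periodicity of $\{y_j\}$, which gives $\frac1{qp}\sum_{j=0}^{qp-1}\delta_{y_j}=\frac1p\sum_{j=0}^{p-1}\delta_{f^j(x)}$. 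Replacing your divisibility assumption (and the vague fallback) with this $qp$-averaging argument repairs the proof; everything else in your plan is sound.
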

\begin{proof}
It is standard to check that the definition distance in the weak$^*$ topology of the space of probability measures implies that   for all $\e_0 >0$, there exists $\e>0$, such that, for any two points $x_1, x_2 \in [0,1]$, $$d(x_1, x_2) < \e \ \ \Rightarrow \ \ \mbox{dist}(\delta_{x_1}, \delta_{x_2})< \e_0.  $$

Fix any $\mu  \in {\mathcal E}_f$. Since $\mu$ is  ergodic, we have
$p \omega (x) = \{\mu\}  \ \ \mbox{ for } \mu\mbox{-a.e. } x \in [0,1].$ Fix such a point $x$, then there exists $n_0 \geq 1$ such that
\begin{equation}
\label{eqn11}
\mbox{dist} \left (\frac{1}{n} \sum_{j= 0}^{n-1} \delta_{f^j(x)}, \ \mu \right ) < \e_0 \ \ \ \forall \ n \geq n_0.\end{equation}
Given $\e$, choose $\delta >0$ given by Theorem \ref{TheoremOprocha&als}. Being well distributed, the point $x$ must be recurrent, thus
$ d(f^{p - 1} (x), x) < \delta \ \ \mbox{ for some } p \geq n_0$.
Construct the periodic $\delta$-pseudo-orbit  $\{y_n\}_{n \in \mathbb{N}}$ of period $p$ defined by   $y_n= f^n(x)$ for all $0 \leq n < p$,
$y_{n + p} = y_n$ for all $n \geq 0$.  Applying Theorem \ref{TheoremOprocha&als}, there exists a periodic orbit $\{f^n(z)\}_{n \geq 0}$, such that
$d (f^n(z), y_n) < \e \ \ \forall \ n \geq 0$.
By construction,  if  $ip \le  n < (i+1)p$    and   $i \ge 0$ then
$d(f^n(z), f^{n-ip}(x)) < \e $
and thus, from the choice of $\e$, we obtain
$\mbox{dist} (\delta_{f^n(z)}, \delta_{f^{n-ip}(x)}) < \e_0$
Denote by $q$ the period of $z$.
Taking into account that  balls are convex in the
 weak$^*$-distance in the space of probabilities, we deduce
$$\mbox{dist}\left ( \frac{1}{qp} \sum_{j= 0 }^{qp-1} \delta_{f^j(z)},  \ \ \frac{1}{qp}  q \cdot \sum_{j= 0 }^{p-1} \delta_{f^j(x)}   \right) < \e_0. $$
For the atomic invariant measure $\nu$ supported on the periodic orbit of $z$, we have
$$\nu = \frac{1}{q} \sum_{j= 0}^{q-1} \delta_{f^j(z)} =  \frac{1}{qp} \sum_{j= 0 }^{qp-1} \delta_{f^j(z)}.$$
Thus,
$$\mbox{dist}\left ( \nu ,  \ \ \frac{1}{p} \sum_{j= 0 }^{p-1} \delta_{f^j(x)}   \right) < \e_0.$$
Together with  (\ref{eqn11}), this implies that 
the given ergodic measure $\mu$ is $2  \e_0$-approximated by  some measure $\nu \in \mbox{Per}_{f}$, but $\e_0 > 0$ was arbitrary.
\end{proof}

 An invariant measure $\mu$ is called \em infinitely shrinked \em if there exists a sequence $\{I_q\}_{q \geq 0}$ of periodic shrinking intervals $I_q$, of periods $p_q$, such that $\mbox{length}(I_q) < 1/q$ and $\mu(\bigcup_{i= 1}^{p_q}f^j(\overline I_q) )= 1$ for all $q \geq 1$.
We denote by $\mbox{Shr}_f \subset {\mathcal M}_f$ the set of  infinitely shrinked  invariant measures.
Define
$$AA_2  \defeq \{ x \in AA_1: \ \  \mu_x \in \mbox{Shr}_f\}. $$
\begin{theorem}
\label{theoremSRB-l=ClosureSrh}
For a typical map $f \in \C$,  $$ \mbox{ Leb}(AA_2)  = 1 \ \ \mbox{ and  } \ \  \O_f  =\overline {\{\mu_x \colon  x \in AA_2\}} =  \overline {\mbox{\em Shr}_f}.$$
\end{theorem}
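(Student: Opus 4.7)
The plan is to assemble the three assertions from results already proved in the paper: Remark~\ref{remarkAA&AA_1}, Theorem~\ref{theoremLebesgue-a-e-isInShrinkingInterval}, Proposition~\ref{propositionAA_1}, and Lemmas~\ref{lemma-dist} and \ref{lemmaInvMeasOnShrinkingIntervals}. I would first establish the full-measure statement $\mbox{Leb}(AA_2) = 1$, and then deduce the two equalities by two separate approximation arguments.

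To show $\mbox{Leb}(AA_2) = 1$, I would combine Remark~\ref{remarkAA&AA_1} (which gives $\mbox{Leb}(AA_1) = 1$) with Theorem~\ref{theoremLebesgue-a-e-isInShrinkingInterval}, which produces, for Lebesgue-a.e.\ $x$, a sequence $\{J_q\}$ of eventually periodic shrinking intervals containing $x$ with $\mbox{length}(J_q) \to 0$. Each $J_q$ eventually maps into a periodic shrinking interval $I_q$ of strictly smaller length, so for any such $x \in AA_1$ the forward orbit is eventually trapped inside $K_q := \bigcup_{j=0}^{p_q-1} f^j(\overline{I_q})$. Since any finite initial prefix is irrelevant for the empirical averages, the unique limit $\mu_x$ is supported on $K_q$ for every $q$; by definition this means $\mu_x \in \mbox{Shr}_f$ and $x \in AA_2$.

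For $\O_f = \overline{\{\mu_x : x \in AA_2\}}$, the inclusion $\supseteq$ is immediate because $AA_2 \subset AA_1$ forces $\mu_x \in \O_f$, and $\O_f$ is closed by Theorem~1.3 of \cite{CE1}. For the reverse inclusion, given $\mu \in \O_f$ and $\e > 0$, I would apply Proposition~\ref{propositionAA_1} to find $x_0 \in AA_1$ with $\mbox{dist}(\mu_{x_0}, \mu) < \e/2$, and then use its ``no isolated points'' clause to produce a positive-Lebesgue-measure set of $x \in AA_1$ with $\mbox{dist}(\mu_x, \mu_{x_0}) < \e/2$. Intersecting with the full-measure set $AA_2$ yields some $x \in AA_2$ with $\mbox{dist}(\mu_x, \mu) < \e$, establishing $\mu \in \overline{\{\mu_x : x \in AA_2\}}$.

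The remaining equality $\overline{\{\mu_x : x \in AA_2\}} = \overline{\mbox{Shr}_f}$ is trivial in the direction $\subseteq$. For the reverse, given $\nu \in \mbox{Shr}_f$, I would choose, for each large $q$, a periodic shrinking interval $I_q$ of period $p_q$ and length $< 1/q$ with $\nu(K_q) = 1$, so that Lemma~\ref{lemmaInvMeasOnShrinkingIntervals} gives $\nu(f^j(\overline{I_q})) = 1/p_q$. Picking $x_q \in I_q \cap AA_2$ (possible since $I_q$ has positive Lebesgue measure and $AA_2$ has full measure), the orbit of $x_q$ stays inside $K_q$, and Lemma~\ref{lemmaInvMeasOnShrinkingIntervals} applied to $\mu_{x_q}$ gives the same mass $1/p_q$ on each $f^j(\overline{I_q})$. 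Lemma~\ref{lemma-dist}, applied to the pairwise disjoint closed intervals $\{f^j(\overline{I_q})\}_{j=0}^{p_q-1}$, then yields $\mbox{dist}(\mu_{x_q}, \nu) \to 0$. The main subtlety is uniformly controlling the lengths of \emph{all} iterates $f^j(\overline{I_q})$ and not merely of $I_q$ itself; this is handled by the observation, recorded just after the definition of eventually periodic shrinking intervals, that $\mbox{length}(f^j(I)) < \mbox{length}(I)$ for all $j \ge 1$ when $I$ is periodic shrinking. Beyond this bookkeeping point, the argument is an entirely routine assembly of previously established facts.
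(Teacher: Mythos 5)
Your proof is correct, and most of it coincides with the paper's argument, but one step takes a genuinely different route. For the inclusion $\O_f \subset \overline{\{\mu_x : x \in AA_2\}}$, the paper invokes Theorem~1.5 of \cite{CE1}, which characterizes $\O_f$ as the \emph{minimal} weak$^*$-compact set containing $p\omega(x)$ for Lebesgue-a.e.\ $x$; since $\overline{\{\mu_x : x \in AA_2\}}$ is a compact set with that property (because $\mbox{Leb}(AA_2)=1$), minimality gives the inclusion immediately. You instead use the ``no isolated points'' clause of Proposition~\ref{propositionAA_1}: given $\mu \in \O_f$, you locate $x_0 \in AA_1$ with $\mu_{x_0}$ close to $\mu$, get a positive-Lebesgue-measure set of $x \in AA_1$ with $\mu_x$ close to $\mu_{x_0}$, and intersect with the full-measure set $AA_2$. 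This is a more elementary argument that avoids appealing to the external minimality theorem at the cost of using the positive-measure clause and a triangle inequality; both are valid, and yours is arguably more self-contained within the paper's own toolbox. Also, in the last inclusion you prove $\mbox{Shr}_f \subset \overline{\{\mu_x : x \in AA_2\}}$ directly, while the paper instead shows $\mbox{Shr}_f \subset \O_f$ (every shrinking measure is pseudo-physical) and then invokes closedness; these are equivalent given the other parts of the theorem, and your bookkeeping on the iterate lengths via $\mbox{length}(f^j(I)) < \mbox{length}(I)$ is exactly the point the paper also relies on.
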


\begin{proof}
From Theorem \ref{theoremLebesgue-a-e-isInShrinkingInterval}, Lebesgue-a.e.~$x \in [0,1]$ belongs to a sequence of eventually periodic or periodic shrinking intervals $J_q$ with $\mbox{length}(J_q) < 1/q$. Every eventually periodic shrinking interval $J_q$ is a wandering under  $f$ until it drops into a periodic shrinking interval $I_q$ with
$ \mbox{length}(I_q)< \mbox{length}(J_q)< 1/q$.  By the definition of periodic shrinking interval, every point of $I_q$ has all the measures of $p\omega (x) $ supported on the compact set $$K_p\defeq \bigcup_{j= 1}^{p_q} f^j(\overline I_q).$$ In particular for Lebesgue almost all $x \in AA_1$, the limit measure $\mu_x$  defined by   (\ref{eqn-mu_x}), is supported on $K_p$. Thus, for a.e.\ $x \in AA_1$ we have $\mu_x \in \mbox{Shr}_f$. We have proved that the set $AA_2$ has full Lebesgue measure.

By construction,  $AA_2 \subset AA_1$. So, applying Proposition \ref{propositionAA_1}, we obtain:
$$ \overline {\{\mu_x \colon  x \in AA_2\}} \subset \overline {\{\mu_x \colon  x \in AA_1\}} = \O_f. $$
To obtain the opposite inclusion, we apply \cite[Theorem 1.5]{CE1}):   $\O_f$ is the minimal weak$^*$-compact set of probability measures, that contains $p \omega (x)$ for Lebesgue a.e.~$x$. Since $ \overline {\{\mu_x \colon  x \in AA_2\}}$ is weak$^*$-compact and contains $p\omega_x = \{\mu_x\}$ for Lebesgue almost all $x$ (because $\mbox{Leb}(AA_2) = 1$), we conclude that
    $$   \overline {\{\mu_x \colon  x \in AA_2\}} \supset \O_f.$$

    The inclusion   $\overline {\{\mu_x \colon  x \in AA_2\}} \subset \overline{\mbox{Shr}_f}$ follows trivially from the definition of the set $AA_2$. Now, let us prove the opposite inclusion. We will prove that every shrinking measure is pseudo-physical.  Let $\mu \in \mbox{Shr}_f$. For any  $\e>0$, choose  $q \geq 1$ as in Lemma \ref{lemma-dist}.  By the definition of shrinking measure, $\mu$ is supported on the $f$-orbit  $$K_q  = \bigcup_{j= 0}^{p_q} f^j(\overline I_q) $$ of a periodic shrinking interval $I_q$ of period $p_q$, such that  $\mbox{length}(\overline I_q) <1/q$; hence  $\mbox{length}(f^j(\overline I_q)) <1/q \ \ \forall \ 1 \leq j \leq p_q$, and thus   $\mu(K_q) = 1$.

    Besides, for any point $x \in I_q$,  all the measures of $p\omega_x$ are also supported on $K_q$. In particular, for  $x \in I_q \cap AA_1$ we obtain $\mu_x (K_q) = 1$.
    Finally, applying Lemmas \ref{lemma-dist} and  \ref{lemmaInvMeasOnShrinkingIntervals}, we deduce that $$\mbox{dist}(\mu, \mu_x) < \e \ \mbox{ for any } x \in I_q \cap AA_1. $$ Since $Leb (I_q \cap AA_1) = Leb (I_q) >0$, the basin $A_\e (\mu)$ has positive Lebesgue measure; namely $\mu $ is pseudo-physical.

    We have shown that every shrinking measure is pseudo-physical. Since the set $\O_f$ of pseudo-physical measures is closed, we conclude $$\overline {\mbox{Shr}_f} = \mathcal O_f, $$
finishing the proof of Theorem \ref{theoremSRB-l=ClosureSrh}.
\end{proof}

\begin{theorem} \label{theoremShrIncludedErgodic-SRB-l-includedClosureErgodic}
For any map $f \in \C$, if $\mu \in \mbox{Shr}_f$, then it is ergodic.
\end{theorem}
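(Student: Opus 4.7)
The plan is to argue by contradiction via the ergodic decomposition of $\mu$. Suppose $\mu \in \mbox{Shr}_f$ is not ergodic, so there exists an $f$-invariant Borel set $A \subset [0,1]$ with $0 < \mu(A) < 1$. Form the two normalized conditional measures
\[
\mu_A \defeq \frac{\mu(\cdot \cap A)}{\mu(A)}, \qquad \mu_{A^c} \defeq \frac{\mu(\cdot \cap A^c)}{\mu(A^c)},
\]
which are both $f$-invariant Borel probabilities (by the $f$-invariance of $A$) and which satisfy $\mu_A(A) = 1$ while $\mu_{A^c}(A) = 0$. The goal is to derive a contradiction by showing $\mu_A = \mu_{A^c}$.

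Fix $q \geq 1$ and set $K_q \defeq \bigcup_{j=0}^{p_q-1} f^j(\overline{I_q})$. Because $\mu(K_q) = 1$ by the defining property of $\mbox{Shr}_f$, and $\mu_A, \mu_{A^c}$ are absolutely continuous with respect to $\mu$, both conditional measures are also concentrated on $K_q$. Applying Lemma \ref{lemmaInvMeasOnShrinkingIntervals} to each, I would deduce
\[
\mu_A(f^j(\overline{I_q})) = \mu_{A^c}(f^j(\overline{I_q})) = \frac{1}{p_q}, \qquad 0 \leq j \leq p_q - 1,
\]
so $\mu_A$ and $\mu_{A^c}$ assign the same mass to each of the $p_q$ pairwise disjoint closed intervals making up $K_q$, each of which has length at most $\mbox{length}(I_q) < 1/q$ (by the definition of periodic shrinking interval together with $f^{p_q}(\overline{I_q}) \subset I_q$).

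Given $\e > 0$, for $q$ sufficiently large Lemma \ref{lemma-dist} applies with $m = p_q$ and with the intervals $\{f^j(\overline{I_q})\}_{0 \leq j \leq p_q - 1}$: the length hypothesis holds for $q$ beyond the threshold of the lemma, and the mass-difference hypothesis is trivially satisfied since $|\mu_A(f^j(\overline{I_q})) - \mu_{A^c}(f^j(\overline{I_q}))| = 0 \leq 1/(q p_q)$. The conclusion $\mbox{dist}(\mu_A, \mu_{A^c}) < \e$ combined with the arbitrariness of $\e$ gives $\mu_A = \mu_{A^c}$, contradicting $\mu_A(A) = 1 \neq 0 = \mu_{A^c}(A)$. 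Hence $\mu$ is ergodic.

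I do not anticipate a serious obstacle: the argument reduces entirely to chaining the two preceding lemmas, the first forcing exact agreement of $\mu_A$ and $\mu_{A^c}$ on every cyclic piece of $K_q$ and the second converting this pointwise agreement into weak$^*$ proximity as $q \to \infty$. The only wrinkle is that some piece $f^j(\overline{I_q})$ might degenerate to a single point (violating the nonempty-interior hypothesis of Lemma \ref{lemma-dist} as stated); this is handled either by rerunning the mean-value-theorem step of the proof of Lemma \ref{lemma-dist} directly, which is perfectly valid on singletons, or equivalently by noting that degenerate pieces are common atoms of $\mu_A$ and $\mu_{A^c}$ of equal mass $1/p_q$ and so do not contribute to any weak$^*$ discrepancy.
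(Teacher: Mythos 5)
Your proof is correct and uses essentially the same strategy as the paper: the paper shows $\mu$ is extremal in $\mathcal{M}_f$ by taking an arbitrary convex decomposition $\mu = \lambda\mu_1 + (1-\lambda)\mu_2$ into invariant measures, noting both $\mu_i$ live on $K_q$, and chaining Lemma~\ref{lemmaInvMeasOnShrinkingIntervals} and Lemma~\ref{lemma-dist} to force $\mu_1 = \mu_2 = \mu$ — while you run the identical lemma chain on the particular decomposition arising from a putative nontrivial invariant set $A$. The two framings are equivalent characterizations of ergodicity and the core mechanism (the two lemmas applied to disjoint cyclic pieces of arbitrarily small total diameter) is identical; your added remark about possibly degenerate pieces $f^j(\overline{I_q})$ is a point the paper silently glosses over, and your resolution of it is sound.
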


\begin{corollary}\label{corollaryShrink}
For a typical map $f \in \C$:
$$\O_f  = \overline{\mbox{Shr}_f}\subset \overline{{\mathcal E}_f} \subset  \overline{\mbox{Per}}_f.$$
\end{corollary}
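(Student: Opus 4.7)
The corollary asserts a four-term chain, and my plan is simply to verify each link by citing the right prior result; no new argument is required beyond assembly.

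The first equality $\O_f = \overline{\mbox{Shr}_f}$ is exactly the second equality stated in Theorem \ref{theoremSRB-l=ClosureSrh}, so I would quote it directly. For the next link, $\overline{\mbox{Shr}_f}\subset\overline{{\mathcal E}_f}$, I would apply Theorem \ref{theoremShrIncludedErgodic-SRB-l-includedClosureErgodic}, which holds for \emph{every} $f\in\C$ and tells us $\mbox{Shr}_f\subset{\mathcal E}_f$; since taking weak$^*$-closure is monotone with respect to inclusion, this inclusion passes to the closures. For the last link, $\overline{{\mathcal E}_f}\subset\overline{\mbox{Per}_f}$, I would invoke Corollary \ref{corollaryOprocha&als}, which gives ${\mathcal E}_f\subset\overline{\mbox{Per}_f}$ for typical $f$; because $\overline{\mbox{Per}_f}$ is already weak$^*$-closed, closing the left-hand side does not enlarge the right.

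The only subtle point worth highlighting in the write-up is that the hypothesis "typical" is used in two places (for the first equality via Theorem \ref{theoremSRB-l=ClosureSrh}, and for the last inclusion via Corollary \ref{corollaryOprocha&als}), whereas the middle inclusion is valid for \emph{all} continuous interval maps; this is where genericity of $f$ is actually consumed. I would also remark, for transparency, that the conceptually hardest step — showing that an infinitely shrinked invariant measure is automatically ergodic (Theorem \ref{theoremShrIncludedErgodic-SRB-l-includedClosureErgodic}) — has already been discharged, so the corollary is genuinely a one-line consequence: chain the three citations and note that $\O_f$, being closed, equals the closure of any of the intermediate dense subclasses it contains. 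There is no real obstacle; the proof is pure bookkeeping.
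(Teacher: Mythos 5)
Your proposal is correct and follows exactly the same route as the paper, which states that the corollary follows immediately by combining Theorems \ref{theoremShrIncludedErgodic-SRB-l-includedClosureErgodic} and \ref{theoremSRB-l=ClosureSrh} with Corollary \ref{corollaryOprocha&als}. Your additional remark pinpointing where typicality is actually used is accurate and a helpful clarification.
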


The corollary immediately follows by combining Theorems \ref{theoremShrIncludedErgodic-SRB-l-includedClosureErgodic}, \ref{theoremSRB-l=ClosureSrh}
 with Corollary \ref{corollaryOprocha&als}.
At the end of next section we will prove that  for typical maps these sets are all equal.

\begin{proof}
Fix $f \in \C$. Suppose $\mu \in \mbox{Shr}_f$, and  $\mu_1, \mu_2 \in \mathcal{M}_f$ such that
\begin{equation}
\label{eqn13}
\mu = \lambda \mu_1 + (1 -\lambda)\mu_2, \ \ \ \mbox{with} \ \  0 < \lambda <1,\end{equation}
We shall prove that $\mu_1 = \mu_2 = \mu$; namely  $\mu$ is extremal in the convex compact set of invariant measures; hence ergodic.

Take arbitrary $\e >0$ and fix $q \geq 1$ as in Lemma \ref{lemma-dist}. By the definition of infinitely shrinking measures, there exists a periodic shrinking interval $I_q$, with $\mbox{length}(I_q) < 1/q$, and period $p_q$, whose $f$-orbit $K_q$ supports $\mu$.  The definition of periodic shrinking interval and Lemma \ref{lemmaInvMeasOnShrinkingIntervals} tell us:
$$\mu(f^j(\overline I_q))  = \frac{1}{p_q},  \ \ \ \mbox{length} (f^j(\overline I_q)) < 1/q \ \ \ \ \forall \ 1 \leq j \leq p_q.$$

 From  (\ref{eqn13}), for $i=1,2$ we have $\mu_i(A) \le  \mu(A)$ for any measurable set $A$. Since $\mu(K_q) = 1$, we deduce $\mu_1(K_q) = \mu_2(K_q) = 1 $. Applying  Lemma \ref{lemmaInvMeasOnShrinkingIntervals} we obtain
 $$\mu_1(f^j(\overline I_q))  = \mu_2(f^j(\overline I_q))  = \frac{1}{p_q}\ \ \ \ \forall \ 1 \leq j \leq p_q. $$
So,   Lemma \ref{lemma-dist} implies
$\mbox{dist}(\mu_1, \mu) < \e,$ and $\mbox{dist}(\mu_2, \mu) \ \ < \e$.
As $\e> 0$ is arbitrary, we conclude that $\mu = \mu_1 = \mu_2$; hence $\mu $ is ergodic.
\end{proof}

\section{All ergodic measures are pseudo-physical.}

Slightly abusing language, we  refer to $\mu \in \mbox{Per}_{f}$ as a  \em periodic measure. \em
\begin{definition} \em  \label{definition-qShrinking}
Let $q \geq 1$ and $x_0$ be a periodic point with period $r \geq 1$. We call the periodic (invariant) measure  $$\nu= \frac{1}{r} \sum_{j=0} ^{r-1}\delta_{f^j (x_0)}$$   a \em  $q$-shrinked measure,  \em  if there exists some periodic shrinking interval  $I $, with length smaller than $1/q$, with period $p \geq 1$ such that $\nu$ is supported on   $K \defeq \bigcup_{j= 1}^p f(\overline I)$. From the definition of periodic shrinking interval,    the period $p$ must be smaller or equal than $r$, and must divide $r$.
We denote by $\mbox{Shr}_q\mbox{Per}_{f}$ the set of $q$-shrinked periodic measures.

We say that an invariant measure $\mu$,  \em is $\e$-approached by $q$-shrinked  periodic measures \em if there exists $\nu \in \mbox{Shr}_q\mbox{Per}_{f}$ such that $\mbox{dist}(\mu, \nu) < \e $. We denote by $\mbox{AShr}_{\e, q}\mbox{Per}_{f}$ the set of measures that are  $\e$-approached by $q$-shrinked periodic measures.

\end{definition}

\begin{theorem} \label{theoremBigcapClosShr_q&SRBl-l}
For any map $f \in \C$
$$  \bigcap_{\e >0}\bigcap_{q \geq1}\overline{\mbox{AShr}_{\e, q}\mbox{Per}}_f  \subset \O_f.$$
\em
Notes: The opposite inclusion also holds if $f \in \C$ is typical. We will prove it later, in Theorem \ref{TheoremSRB-l=ClosureErgodic}.

It is standard to check that $\bigcap_{\e >0}\bigcap_{q \geq1}\overline{\mbox{AShr}_{\e, q}\mbox{Per}}_f = \bigcap_{q \geq1}\overline{\mbox{Shr}_{ q}\mbox{Per}}_f$.  Nevertheless, we will not use this latter equality.

\end{theorem}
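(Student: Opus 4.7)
The plan is to unfold the statement ``$\mu$ is pseudo-physical'' to its definition and produce, for each prescribed tolerance $\eta>0$, an explicit open interval contained in $A_\eta(\mu)$. So fix $\eta>0$ and first apply Lemma \ref{lemma-dist} with $\e\defeq \eta/2$ to obtain an integer $q_0\ge 1$ such that any two probability measures supported on a common disjoint union of closed intervals of length at most $1/q_0$ with sufficiently close masses on each interval are at weak$^*$-distance less than $\eta/2$. Next I would use the assumption $\mu\in \overline{\mbox{AShr}_{\eta/4,q_0}\mbox{Per}}_f$ to pick a measure $\rho\in \mbox{AShr}_{\eta/4,q_0}\mbox{Per}_f$ with $\mbox{dist}(\rho,\mu)<\eta/4$; by the definition of $\mbox{AShr}_{\e,q}\mbox{Per}_f$ there is then a $q_0$-shrinked periodic measure $\nu\in \mbox{Shr}_{q_0}\mbox{Per}_f$ with $\mbox{dist}(\nu,\rho)<\eta/4$, so the triangle inequality yields $\mbox{dist}(\nu,\mu)<\eta/2$.

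By the definition of $q_0$-shrinked, $\nu$ is supported on the $f$-orbit $K\defeq \bigcup_{j=0}^{p-1}f^j(\overline I)$ of some periodic shrinking interval $I$ of period $p$ with $\mbox{length}(I)<1/q_0$; and, as recalled immediately after the definition of shrinking interval, $\mbox{length}(f^j(\overline I))<\mbox{length}(I)<1/q_0$ for every $j\ge 1$, while the $p$ intervals $f^j(\overline I)$ are pairwise disjoint. The key observation is that for any $x\in I$ the forward orbit stays in $K$ (because $f^p(\overline I)\subset I$), so every accumulation measure $\mu_x\in p\omega(x)$ is $f$-invariant and supported on $K$. Applying Lemma \ref{lemmaInvMeasOnShrinkingIntervals} to both $\mu_x$ and $\nu$ gives
$$\mu_x(f^j(\overline I))=\nu(f^j(\overline I))=\frac{1}{p}\quad\text{for all } 0\le j\le p-1,$$
so the mass-matching hypothesis of Lemma \ref{lemma-dist} (with $m=p$) holds trivially with both sides equal. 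Lemma \ref{lemma-dist} then delivers $\mbox{dist}(\mu_x,\nu)<\eta/2$, and another application of the triangle inequality yields $\mbox{dist}(\mu_x,\mu)<\eta$ for every $\mu_x\in p\omega(x)$, i.e.\ $x\in A_\eta(\mu)$.

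Thus $I\subset A_\eta(\mu)$, which has positive Lebesgue measure; since $\eta>0$ was arbitrary this shows $\mu\in\O_f$. I do not expect a genuine obstacle: the plan is essentially a clean triangle-inequality chain $p\omega(x)\to \nu\to \rho\to \mu$ with tolerances $\eta/2,\eta/4,\eta/4$, and the only delicate point is that Lemma \ref{lemma-dist} must be applied with a \emph{single} integer $q_0$ simultaneously to $\mu_x$ and to $\nu$. This is exactly why the intersection is taken over all $q\ge 1$ (so that one can choose $q_0$ in advance from $\eta$ via Lemma \ref{lemma-dist}), and why the $q_0$-shrinked condition on $\nu$ is built into the definition of $\mbox{AShr}_{\e,q}\mbox{Per}_f$.
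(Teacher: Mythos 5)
Your proof is correct and takes essentially the same route as the paper's: both arguments apply Lemma \ref{lemma-dist} to fix $q$ in advance, extract a $q$-shrinked periodic measure $\nu$ close to $\mu$, observe that for $x$ in the underlying shrinking interval $I$ every $\mu_x\in p\omega(x)$ is supported on the orbit $K$ and hence (Lemma \ref{lemmaInvMeasOnShrinkingIntervals}) has the same masses as $\nu$ on the $f^j(\overline I)$, and then close with the triangle inequality to get $I\subset A_\eta(\mu)$. The only difference is bookkeeping: the paper chains $\mu_x\to\nu_q\to\mu_q\to\mu'_q$ to land on $3\e$, whereas you split $\eta$ into $\eta/4+\eta/4+\eta/2$ and land on $\eta$ directly.
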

\begin{proof}
Fix  $\e > 0$. Take $q \geq 1$ as in Lemma \ref{lemma-dist},  such that $1/q < \e$. We will  prove that  for any  $\mu_q  \in \mbox{AShr}_{1/q, q}\mbox{Per}_{f} $, the  basin $A_{2\e}(\mu_q)$ has positive Lebesgue measure.

In fact, for any $\mu_q  \in \mbox{AShr}_{1/q, q}\mbox{Per}_f $, denote by $\nu_q $ a measure in $ {\mbox{Shr}_q\mbox{Per}}_f$  such that $$\mbox{dist}(\mu_q, \nu_q) < 1/q< \e.$$
Consider  the periodic shrinking interval $I$ and the compact set $K$ for $\nu_q$ from   Definition \ref{definition-qShrinking}. From the definition of periodic shrinking interval,  any point $x \in I$ satisfies $p\omega (x) \subset K= \bigcup_{j= 1}^p f^j(\overline I)$. So, any  measure $\mu_x \in p\omega (x)$ is supported on $K$. Also $\nu_q$ is supported on $K$. Thus, applying Lemma \ref{lemmaInvMeasOnShrinkingIntervals}, we deduce that
$$\mu_x( f^j(\overline I)) = \nu_q(f^j(\overline I)) = \frac{1}{p} \ \ \forall \ 1 \leq j \leq p; \ \ \ \mbox{length}(f^j(\overline I)) < \frac{1}{q}.$$
Now, from Lemma \ref{lemma-dist}, we obtain  $\mbox{dist}(\nu_q, \mu_x) < \e$ for all $\mu_x \in p\omega(x)$, for all $x \in I$. Thus, for any $x \in I = I(\nu_q)$
\begin{equation}
\label{eqn14}
\mbox{dist}(p \omega(x), \nu_q)< \e,  \mbox{ hence } \mbox{dist}(p \omega(x), \mu_q)< 2 \e.\end{equation}

Note that when we vary the value of $\e >0$, the value of $q$, and thus also the measures $\nu_q$ and $\mu_q$ and the interval $I$, may change. So, from the above inequality we \em can not deduce \em that each $\mu_q$ is pseudo-physical. Nevertheless, we have proved that for any fixed value of  $\e> 0$ there exists $q \geq 1$ such that Inequality (\ref{eqn14}) holds  for all $\mu_q  \in \mbox{AShr}_{1/q, q}\mbox{Per}_f $.

Now, consider any measure $\mu'_q \in  \overline{\mbox{AShr}_{1/q, q}\mbox{Per}_f}$. Thus, there exists $\mu_q \in \mbox{AShr}_{1/q, q}\mbox{Per}_f$ such that
$$\mbox{dist}(\mu'_q, \mu_q) < \e $$
Combining this with (\ref{eqn14}), we obtain, for  fixed $\e >0$,  a value of $q \geq 1$ such that, for any measure $\mu'_q  \in \overline{\mbox{AShr}_{1/q, q}\mbox{Per}_f}$ there exists an open interval $I$ such that
\begin{equation}
\label{eqn15}
\mbox{dist}(p \omega(x), \mu_q) < 3\e \ \ \ \forall \ x \in I.\end{equation}
So, if $\mu \in \bigcap_{\e >0}  \bigcap_{q \geq 1}\overline{\mbox{AShr}_{\e, q}\mbox{Per}_f}$, then,  for all $\e >0$ there exists an open interval $I$ satisfying assertion (\ref{eqn15}). This proves that $\mbox{Leb}(A_{3\e}(\mu)) >0$ for all $\e >0$; hence $\mu \in \O_f$, as wanted.
\end{proof}
\begin{theorem}
\label{TheoremSRB-l=ClosureErgodic}
For a fixed typical map $f \in \C$,
\begin{equation}
\label{eqn16}
\mbox{Per}_f \subset \bigcap_{q \geq1}\overline{\mbox{AShr}_{\e, q}\mbox{Per}_f} \ \ \forall \ \e>0.\end{equation}
Therefore,
\begin{equation}
\label{eqn17}\bigcap_{\e >0}\bigcap_{q \geq1}\overline{\mbox{AShr}_{\e, q}\mbox{Per}_f} =  \mathcal O_f = \overline{\mbox{AShr}_f} =\overline{ \mathcal E_f }=\overline{ \mbox{Per}_f}. \end{equation}
In particular, any invariant ergodic measure for $f$ is pseudo-physical.
\end{theorem}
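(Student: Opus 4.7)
The plan splits into two conceptually distinct pieces: the combinatorial derivation of the equality chain (\ref{eqn17}) from (\ref{eqn16}), and the substantive content of (\ref{eqn16}) itself. I would dispatch the bookkeeping first. Taking closures in (\ref{eqn16}) gives $\overline{\mbox{Per}_f}\subset \bigcap_{\e,q}\overline{\mbox{AShr}_{\e,q}\mbox{Per}_f}$; chaining this with Theorem \ref{theoremBigcapClosShr_q&SRBl-l} ($\bigcap\subset \mathcal O_f$), Theorem \ref{theoremSRB-l=ClosureSrh} ($\mathcal O_f=\overline{\mbox{Shr}_f}$), Theorem \ref{theoremShrIncludedErgodic-SRB-l-includedClosureErgodic} ($\overline{\mbox{Shr}_f}\subset\overline{\mathcal E_f}$), and Corollary \ref{corollaryOprocha&als} ($\overline{\mathcal E_f}\subset\overline{\mbox{Per}_f}$) closes the cycle and forces all five sets to coincide, yielding (\ref{eqn17}). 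The final assertion of the theorem is then immediate, since $\mathcal E_f\subset\overline{\mathcal E_f}=\mathcal O_f$.

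For (\ref{eqn16}), fix $\mu\in\mbox{Per}_f$ supported on the orbit $O=\{x_0,fx_0,\ldots,f^{r-1}x_0\}$, $\e>0$ and $q\ge 1$; the goal is to exhibit a single $\nu\in\mbox{Shr}_q\mbox{Per}_f$ with $\mbox{dist}(\mu,\nu)<\e$. Choose $q'\ge q$ from Lemma \ref{lemma-dist} so that matching masses on disjoint intervals of length less than $1/q'$ yield measures within $\e$, and set $Q=\max(q,q')$. Granted a periodic shrinking interval $K_0$ of length less than $1/Q$ with $x_0\in K_0$, let $p_0$ be its period: because the $f^j(\overline{K_0})$ are pairwise disjoint for $j=0,\ldots,p_0-1$ and $f^{p_0}(\overline{K_0})\subset K_0$, the orbit of $x_0$ returns to $K_0$ first at time $p_0$, and minimality of $r$ forces $p_0\mid r$. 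By the definition of periodic shrinking interval there is a periodic orbit $\{z_0,fz_0,\ldots,f^{p_0-1}z_0\}$ of period $p_0$ inside $K_0$, and its atomic measure $\nu$ lies in $\mbox{Shr}_q\mbox{Per}_f$. Induction on $f^{p_0}(\overline{K_0})\subset K_0$ gives $f^{kp_0}(x_0)\in K_0$ for each $k$, hence $f^{j+kp_0}(x_0)\in f^j(\overline{K_0})$ for $k=0,\ldots,r/p_0-1$, producing $r/p_0$ distinct points of $O$ in each $f^j(\overline{K_0})$ and exhausting $O$. Thus both $\mu$ and $\nu$ assign mass $1/p_0$ to each of the $p_0$ pairwise disjoint intervals $f^j(\overline{K_0})$, whose lengths are at most $1/Q\le 1/q'$, and Lemma \ref{lemma-dist} delivers $\mbox{dist}(\mu,\nu)<\e$.

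The main obstacle is the existence of a periodic shrinking interval of arbitrarily small length containing the given periodic point $x_0$: Theorem \ref{theoremLebesgue-a-e-isInShrinkingInterval} only guarantees such intervals through a set of full Lebesgue measure, and the countable set of periodic points may sit in the null complement. I would fill this gap by a separate dense-$G_\de$ argument. Let $\mathcal T_{q,k}$ denote the set of $f\in\C$ such that every periodic point of $f$ of period at most $k$ lies in the interior of some periodic shrinking interval of length less than $1/q$. Openness of $\mathcal T_{q,k}$ is routine: both small periodic shrinking intervals and interior periodic points persist under sufficiently small $C^0$-perturbations. Density follows by refining the perturbation scheme in the proof of Theorem \ref{theoremLebesgue-a-e-isInShrinkingInterval}: its piecewise-constant approximant $\widehat g$ places every periodic point exactly at a midpoint $x_i$, which sits in the interior of the corresponding shrinking interval $I_i$, so a small mollification already lies in $\mathcal T_{q,k}$. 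Intersecting $\mathcal T_{q,k}$ over all $q$ and $k$, and with the typical sets used in the earlier sections, provides the required $K_0$ for every periodic point and finishes the proof of (\ref{eqn16}).
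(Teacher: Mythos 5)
Your derivation of the equality chain~(\ref{eqn17}) from~(\ref{eqn16}) is correct and essentially identical to the paper's. For~(\ref{eqn16}) you take a genuinely different route: the paper works with coverings of $\mathrm{Fix}(f^r)$ by small open intervals, each merely \emph{near} (within $1/q$) a periodic shrinking interval whose period divides $r$, and then uses uniform continuity of $f,\dots,f^r$ plus convexity of weak$^*$-balls to compare the two periodic orbits; you instead insist that the periodic point $x_0$ lie \emph{inside} a small periodic shrinking interval $K_0$, which allows a cleaner direct application of Lemma~\ref{lemma-dist} (your verification that $p_0\mid r$ and that both $\mu$ and $\nu$ give mass $1/p_0$ to each $f^j(\overline{K_0})$ is correct). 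The price is that you need the stronger typical property encoded in $\mathcal T_{q,k}$, and that is where the gap is.

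The density of $\mathcal T_{q,k}$ is not established by your sketch, and the difficulty is precisely the one the paper's Lemma~\ref{lemmaP_qrIsDense} goes to some length to address. The piecewise-constant map $\widehat g$ indeed has all its periodic orbits among the midpoints $x_i$, which sit in the interiors of the intervals $I_i$. But the continuous mollification $g$ is not $\widehat g$: on each thin gap $J_i\setminus I_i$ the map $g$ is affine with potentially huge slope (of order $\e N k$), and such an affine segment can cross the diagonal, creating repelling periodic points of small period that lie in $J_i\setminus I_i$ and in no periodic shrinking interval whatsoever. The sentence ``a small mollification already lies in $\mathcal T_{q,k}$'' is therefore unjustified. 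The paper's construction runs into the same phenomenon --- the perturbed map $g$ can acquire new fixed points of $g^r$ outside the first covering --- and resolves it by a finite iterative scheme (Steps~1,~2,~3,\,\dots) that repeatedly enlarges the covering to absorb the newly created periodic points, terminating because each step adds at least one interval from a fixed finite collection. Your argument would need a comparable mechanism (e.g.\ iterating your modification on the gaps where new period-$\le k$ points appear, keeping track that the process halts), and as written it omits it. Your openness claim for $\mathcal T_{q,k}$, while stated too briskly, does hold: cover the compact set $\bigcup_{r\le k}\mathrm{Fix}(f^r)$ by finitely many of the open shrinking intervals of length $<1/q$, and use that both the covering property of $\mathrm{Fix}(g^r)$ and the shrinking-interval property persist under small $C^0$ perturbations.
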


\begin{proof}
It is enough to prove   (\ref{eqn16}). In fact,     (\ref{eqn17}) is an immediate consequence of   (\ref{eqn16}), Corollary \ref{corollaryShrink} and Theorem
\ref{theoremBigcapClosShr_q&SRBl-l}.
To prove  (\ref{eqn16}), we first define, for any  $q,r \in \N^+$, the following concept:

\vspace{.2cm}

\begin{definition}
\label{definitionP_q,r} \em

A \em good $q, r$-covering  \em $\mathcal U_{q,r}$ for $f \in \C$, is a finite family of \em open \em intervals   such that

(1)  $\mathcal U_{q,r}$ covers the compact set $\{x \in [0,1]\colon f^r(x) = x\}$.

(2)  $\mbox{length}(U_i)< 1/q$ for any $U_i \in \mathcal U_{q,r}$.

(3) For any $U_i \in \mathcal U_{q,r}$, there exists a periodic shrinking interval $I_i$, with period $p_i \leq r$, with $p_i$ that divides $r$, such that $\overline I_i \subset U_i$.

\vspace{.2cm}

We call a map $f \in \C$ \em a good $q,r$-covered map,  \em  if  there exists a  good $q, r$-covering  $\mathcal U_{q,r}$ for $f$. We denote by ${\mathcal P}_{q,r} \subset \C$ the set of all good $q,r$-covered maps.

\end{definition}

Let us prove that, for fixed $q,r \geq 1$, the set ${\mathcal P}_{q,r}$  is open in $\C$.
Fix $f \in {\mathcal P}_{q,r} $, and denote its good $q,r$-covering by  $\mathcal U_{q,r} = \{U_1, U_2, \ldots, U_h\}$. The compact set $K= [0,1] \setminus \bigcup_{i= 1}^h U_i$ does not intersect the compact set $\{f^r(x) =x\}$. Let us prove that for all $g \in \C$ close enough to $f$, the same compact set $K$ (defined for the \em same \em covering ${\mathcal U}_{q,r}$) does not intersect   $\{g^r(x) = x\}$.  In fact,    the real function   $\phi_f(\cdot) : = \mbox {dist} (f^r(\cdot), \cdot) $  depends continuously on $f$. Since $\min _{x \in K} \phi_f(x) >0$, we deduce $\min _{x \in K} \phi_g(x) >0$ for all $g \in \C$ close enough to $f$.
In other words, ${\mathcal U}_{q,r}$ also covers the fixed points of $g^r$.  Thus, the good $q,r$-covering of $f$, is also a covering satisfying conditions (1) and (2) of Definition \ref{definitionP_q,r},  for any $g \in \C$ close enough to  $f$.
Now, let  us prove that Condition (3) for $g$  is satisfied   by the same   covering ${\mathcal U}_{q,r}$, provided that $g $ is close  enough to
 $f$.
Consider a $f$-shrinking periodic interval $I_i \subset U_i \in  {\mathcal U}_{q,r}$, of period $p_i$.
Now  $I_i$ is a periodic shrinking interval with the same period $p_i$  for all $g$ sufficient close  to $f$. Since the finite family $\{I_i\}_{1 \leq i \leq h}$ of shrinking periodic intervals to be preserved is finite, we conclude that
(3) is also satisfied for any $g$ sufficiently close to $f$
and thus ${\mathcal P}_{q,r}$ is open in $\C$.

In Lemma \ref{lemmaP_qrIsDense}, we will  prove that  ${\mathcal P}_{q,r}$ is  dense in $\C$. Let us conclude the proof of Theorem \ref{TheoremSRB-l=ClosureErgodic} assuming Lemma \ref{lemmaP_qrIsDense}.
Observe that, for fixed $q,r \geq 1$, any $f \in {\mathcal P}_{q,r}$ has the following property:
any   point $x_0$ fixed by $f^r$ (in particular any periodic point $x_0$ of period $r$) is $(1/q)$-near all the points of a periodic shrinking interval $I_0$ with length smaller than $1/q$, and with period $p_0  \leq r$, $p_0 $ dividing $r$.

Besides, any periodic shrinked interval of period $p_0$ has at least one periodic point $y_0$, fixed by $f^{p_0}$. We deduce that $I_0 $, whose length is smaller than $1/q$, contains  a periodic point $y_0$. Now, using the  definition of the set of measures $\mbox{Shr}_q \mbox{Per}_f$, we summerize this assertion as follows:
 \begin{equation}
\label{eqn18} \forall \ x_0 \mbox{ with period } r, \ \  \ \exists \
\nu_0 \defeq  \frac{1}{r} \sum_{j= 0}^{r-1} \delta_{f^j(y_0)} \in \mbox{Shr}_q \mbox{Per}_f, \end{equation} $$
\mbox{ with } \ \  |y_0- x_0| < 1/q.$$

For   $r \geq 1$ fixed, consider $f \in \bigcap_{q \geq 1} \mathcal P_{q,r}$. For any $\e>0$, apply Lemma \ref{lemma-dist} to find $q' \geq 1$ such that:
$\mbox{ if } |x-y| < 1/q' < \e$  then  $\mbox{dist}(\delta_x, \delta_y) < \e$.
Choose $0<\delta < 1/q' $ such that
$\mbox{if }|x-y| < \delta$  then $|f^j(x)- f^j(y)| < 1/q' \ \ \  \forall \ 0 \leq j \leq r$.
Let us consider all  $q \in \N$ such that $q > 1/\delta > q'$.
Remember that $f  \in   \mathcal P_{q,r}$ for all $q \geq 1$. We apply Assertion (\ref{eqn18}), since $|y_0 - x_0| < 1/q$,  we  deduce
  $$|f^j(y_0) - f^j(x_0)| < 1/q' , \ \ \ \ \ \mbox{dist}(\delta_{f^j(y_0)}, \delta _{f^j(x_0)}) < \e \ \ \forall \ 0 \leq j \leq r.$$
Since  balls in the space of probability measures with the weak$^*$-topology  are convex, we conclude
  $$\mbox{dist} \left ( \frac{1}{r} \sum_{j=0}^{r-1}\delta_{ f^j(y_0)}, \ \   \frac{1}{r} \sum_{j=0}^{r-1} \delta_{ f^j(x_0)}   \right ) < \e.$$
We have shown that for any given periodic orbit $\{f^j(x_0)\}_{ 0 \leq j \leq r-1}$ of period $r$, the distance between the periodic measure  supported on it, and some measure $\nu_q \in \mbox{Shr}_q \mbox{Per}_f$, for all $q$ small enough, is smaller than $\e$.
  In other words, any periodic measure supported on a periodic orbit of period $r$, belongs to $\bigcap_{q \geq 1} \overline {\mbox{AShr}_{\e, q}\mbox{Per}_f}$.

  Finally, if $f \in {\mathcal P}\defeq \bigcap_{r \geq 1} \bigcap_{q \geq 1}\mathcal P_{q,r}$, then all its periodic measures (supported on periodic orbits of any period $r$) will belong to $\overline{\mbox{AShr}_{\e, q}\mbox{Per}_f} $. In brief, if $f \in   \mathcal P$, then
  $$\mbox{Per}_f \subset \overline{\mbox{AShr}_{\e, q}\mbox{Per}_f}  \ \ \ \forall \ \e > 0. $$

   As ${\mathcal P}_{q,r}$ is open and dense in $\C$, the maps $f \in \mathcal P$ are typical. This ends the proof of Theorem \ref{TheoremSRB-l=ClosureErgodic} (provided that Lemma \ref{lemmaP_qrIsDense} is proven).
\end{proof}
\begin{lemma}
\label{lemmaP_qrIsDense}

For each  $q,r \geq 1$, the set ${\mathcal P}_{q,r}$
is dense in $\C$.
\end{lemma}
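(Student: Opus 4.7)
The plan is to approximate $f$ by a piecewise affine map $g_0$ for which the equation $g_0^r(x)=x$ has only finitely many solutions, and then locally flatten $g_0$ near each such periodic orbit of period dividing $r$, creating a periodic shrinking interval inside a small open ball around every orbit point; the collection of these open balls will serve as the good $q,r$-covering.

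First I would fix $\e>0$ and approximate $f$ within $\e/2$ by a piecewise affine $g_0\in\C$ such that the fixed-point set $O\defeq\{x\in[0,1]\colon g_0^r(x)=x\}$ is finite. The iterate $g_0^r$ is piecewise affine, so on each linear piece the equation $g_0^r(x)=x$ is a linear equation with either a unique solution or an entire piece of solutions; the latter occurs only when the slope equals $1$ and the constant term is zero on that piece, a configuration ruled out by a further arbitrarily small generic perturbation of the slopes of $g_0$. The set $O$ then decomposes into finitely many periodic orbits of $g_0$, each of period $p|r$. Since the continuous function $h(x)\defeq g_0^r(x)-x$ vanishes only on $O$, I can pick $\delta_0<1/(2q)$ small enough that the open intervals $U_y\defeq(y-\delta_0,y+\delta_0)$, $y\in O$, are pairwise disjoint, and by compactness there exists $c>0$ with $|h(x)|\geq c$ for every $x\notin\bigcup_{y\in O}U_y$.

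Next, for a parameter $\eta\ll\delta_0$ to be chosen later, and with $y'\defeq g_0(y)$ for each $y\in O$, I would perturb $g_0$ inside $[y-\eta,y+\eta]\subset U_y$: set $g(x)\defeq y'$ on $I_y\defeq(y-\eta/2,y+\eta/2)$, let $g$ be affine on $[y-\eta,y-\eta/2]$ between $g_0(y-\eta)$ and $y'$ and similarly on $[y+\eta/2,y+\eta]$, and keep $g=g_0$ elsewhere. By uniform continuity of $g_0$, $\|g-g_0\|_\infty$ (and hence $\|g^r-g_0^r\|_\infty$) tends to $0$ with $\eta$. Each open interval $I_y$ is a periodic shrinking interval of $g$ of period $p_y\mid r$: we have $g(\overline{I_y})=\{y'\}\subset I_{y'}$, the successive images along the orbit of $y$ are distinct single points (each of length $0<\mbox{length}(I_y)$) lying in the corresponding $I_{y^{(j)}}$, and $g^{p_y}(\overline{I_y})=\{y\}\subset I_y$. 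Hence the family $\mathcal U_{q,r}\defeq\{U_y\}_{y\in O}$ with distinguished sub-intervals $I_y$ meets conditions (2) and (3) of Definition~\ref{definitionP_q,r}.

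Finally, I would choose $\eta$ so small that both $\|g-g_0\|_\infty<\e/2$ and $\|g^r-g_0^r\|_\infty<c/2$ hold. Then for every $x\notin\bigcup_y U_y$ one has
\[
|g^r(x)-x|\geq|g_0^r(x)-x|-|g^r(x)-g_0^r(x)|\geq c-c/2>0,
\]
so $x$ is not fixed by $g^r$, verifying condition (1) of Definition~\ref{definitionP_q,r}; combined with $\mbox{dist}(g,f)\leq\|g-f\|_\infty<\e$, this shows $g\in\mathcal P_{q,r}$ and density is proved. I expect the main obstacle to be the genericity argument in the first step: one must confirm that by arbitrarily small slope perturbations one can eliminate every linear piece of $g_0^r$ whose slope equals $1$ and whose constant term is zero, a finite list of forbidden configurations on the finite-dimensional parameter space of slopes of $g_0$ that can be handled by standard transversality.
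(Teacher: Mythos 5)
Your proof is correct but takes a genuinely different route from the paper's. The paper builds the perturbation iteratively: at step $k$ it covers the current set of $g^r$-fixed points by intervals drawn from a fixed grid of mesh $<1/q$, modifies $g$ inside the newly added intervals to create periodic shrinking sub-intervals, then checks whether the modification has produced uncovered $g^r$-fixed points, adding more grid intervals and repeating if so, with termination guaranteed by the finiteness of the grid. You instead \emph{finitize upfront}: replace $f$ by a piecewise affine $g_0$ for which $O = \{g_0^r(x)=x\}$ is finite, cover $O$ by pairwise disjoint intervals $U_y$ of length $<1/q$, exploit the compactness bound $|g_0^r(x)-x| \geq c > 0$ off $\bigcup_y U_y$, and then flatten $g_0$ to a constant near each $y \in O$ by a perturbation so small that $\sup_x|g^r(x)-g_0^r(x)| < c/2$, so no new $g^r$-fixed points can appear. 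This removes the iteration entirely. The one point you flag, and which is the only real gap to fill, is the finitization step: you need a piecewise affine $g_0$, $\e/2$-close to $f$, such that $g_0^r - \mathrm{id}$ has no interval of zeros; for this it suffices (and is slightly cleaner than the slope-$1$-and-constant-$0$ condition you cite) to arrange that no affine piece of $g_0^r$ has slope exactly $1$, a finite union of codimension-one conditions on the slope products of $g_0$, achievable by an arbitrarily small perturbation of the values of $g_0$ at its break points. The rest of your argument --- the flattening producing a shrinking interval $I_y$ of period $p_y\mid r$ inside $U_y$, the uniform-continuity control of $\sup_x |g(x)-g_0(x)|$ and hence of $\sup_x |g^r(x)-g_0^r(x)|$, and the lower bound delivering condition (1) of Definition \ref{definitionP_q,r} --- is sound and arguably cleaner than the paper's step-wise scheme, which has the compensating virtue of never requiring the fixed-point set of $f^r$ to be finite.
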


\begin{proof}   Fix $f \in \C$ and $\e>0$.
Take $0<\delta \defeq \frac{1}{q'}< \min\Big \{\frac{1}{q}, \ \frac{\e}{2}  \Big \}$  such that $|x-y| < \delta \ \Rightarrow \ |f(x) - f(y)|< \frac{\e}{2}$.
To $\e$-perturb $f$  into new map $g \in {\mathcal P}_{q,r}$, we will proceed
in a finite number of steps. For each $i \in \{0,\dots, 2q'-2\}$ consider the open interval $J_i \defeq (\frac{i}{2q'}, \frac{i+2}{2q'})$, this collection of intervals forms an open cover $\mathcal J$ of $[0,1]$.

\noindent{\bf First step: }  Consider the finite covering
$\J_1  = \{J_{1,1}, \ldots, J_{1,h_1}\} \subset \mathcal J$ of  $J \in \mathcal J_1$ which intersect the compact set $K_1\defeq \{f^r(x)  = x\}$.
Construct   $\mathcal J_1$ to be a minimal covering of $K_1$ in the following sense: in each interval   $J_{1,i} \in \mathcal J_1$, there exists at least one point $x_{1,i}$ such that $$ x_{1,i} \in J_{1,i}, \ \ f^r(x_{1,i} ) = x_{1,i},  \ \ x_{1,i} \not \in  \overline {J_{1,j}} \ \ \forall \ j \neq i.$$
This is possible since if all the $f^r$-fixed points  that belong to the open interval $J_{1,i}$, also belonged   to $ \overline {J_{1,j}}$ for some $j \neq i$, we can slightly enlarge  $J_{1,j}$ to be sure that all of them belong to the interior of  $\overline {J_{1,j}}$, and would suppress $J_{1,i}$ from the covering $\J_1$. We can do this in such a
way that the length of all the intervals are at most $\delta_1 < \min \{\frac{1}{q},\frac{\e}{2}, 1.1 \cdot  \delta\}$.
Note that $\mathcal J_1$ is no longer a sub-collection of $\mathcal J$, but it is naturally in a bijection with a sub-collection. Throughout the proof we will use this
bijection, and make further modifications to the intervals in such a way that the new collections are always in bijection with a sub-collection of $\mathcal J$.

Denote    $$F_1 : = \{x_{1,i}: 1 \leq i \leq h_1\}, \ \ \ \ \ (F_1)^r \defeq \bigcup_{j= 0}^{r-1} f^j(F_1).$$
Hence $F_1$ is consisits of exactly  $h_1$ different points that are fixed by $f^r$, and $(F_1)^r$  consists of at most $r h_1$ different points, also fixed by $f^r$.

In each open interval $J_{1,i} \in \mathcal J_1$ construct  a small open interval $I_{1,i}$ such that $$x_{1,i} \in I_{1,i}, \ \ \ \  \overline {I_{1,i}} \subset V_{1,i} \defeq J_{1,i}\setminus \bigcup_{j \neq i} \overline {J_{1,j}}, \ \ \ \   (F_1)^r    \bigcap \  \overline{I_{1,i}} = \{x_{1,i}\}.$$
For each $1 \leq i \leq h_1$, define $$g:  \overline {V_{1,i}} \mapsto \Big(f(x_{1,i}) - \frac{\e}{2}, \ \ \  f(x_{1,i}) - \frac{\e}{2} \Big )$$ to be continuous, piecewise affine,  with a minimum number of affine pieces, and  such that

(a) $g(x) = f(x)$ if $x \not \in V_{1,i}$ or if $x \in V_{1,i} \bigcap  (F_1) ^r$.

\hspace{.5cm}  In particular $g(x_{1,i})= f(x_{1,i})$.

(b) $g(x) = f(x_{1,i})$ if $x \in \overline {I_{1,i}}.$

\vspace{.2cm}

 Let us check that $|g(x) - f(x)| < \e$ for all $x$.
First of all by construction, $g(x) = f(x)$ for all $x \not \in \bigcup_{i= 1}^{h_1} V_{1,i}$.
Now suppose   $x \in V_{1,i}$ for some $1 \leq i \leq h_1$.
Since $g|_{\overline{V_{1,i}}}$ is piecewise affine with a minimum number of pieces satisfying conditions (a) and (b),
 there exists two points  $y , z \in \overline{V_{1,i}}$  such that $g(y) = f(y) \leq g(x) \leq g(z) = f(z)$. Thus,
 $$ g(x) - f(x) \leq f(z) - f(x) \leq |f(z) - f(x)| <\e , \  \ \mbox{ and } $$ $$g(x) - f(x) \geq f(y) -  f(x) \geq -|f(y) - f(x)|  > - \e. $$

 By construction, each interval $I_{1,i}$ is periodic shrinking for $g$ with period that divides $r$. In fact, $g(\overline{I_{1,i}}) = f(x_{1,i})$, the orbits by $g$ and $f$ of ${x_{1,i}}$  coincide, and  so $g^r(x_{1,i}) = f^r(x_{1,i}) = x_{1,i} \in I_{1,i}$. We deduce that $I_{1,i}$ is periodic shrinking for $g$, with some period equal to the period  of the point  $x_{1,i}$, which divides $r$.

  Up to now, we have constructed an $\e$-perturbation $g$ of $f$ and a finite family $\J_1$ of open intervals that satisfy Conditions (2) and (3) of Definition \ref{definitionP_q,r} for $g$, and Condition (1) for $f$.  Either $\J_1$ also satisfies condition (1) for $g$, or not. In the first case, define  ${\mathcal U}_{q,r}\defeq\J_1$. This covering satisfies Definition  \ref{definitionP_q,r} for $g$. Hence, it  is a good $q,r$-covering for $g$. So $g \in {\mathcal P}_{q,r}$ and the proof of Lemma \ref{lemmaP_qrIsDense} is finished. In the second case,
  we will continue in Step 2, to modify $f$ in $[0,1]\setminus   { \bigcup_{J  \in \J_1} J }$.

 \noindent{\bf Second step: }  Construct a  finite covering $\mathcal J_2$ of the compact set $K_2\defeq \{g^r(x)  = x\}$ by adding  to $\J_1$ all the intervals of $\J$ not  ``bijectively'' in $\J_1$ which intersect the compact set $K_2\defeq \{g^r(x)  = x\}$.

   Choose  the  new open intervals $J_{2,i}$, with $\ 1 \leq i \leq h_2$, such  that $\mathcal J_2$
is a minimal covering of $K_2$ in the following sense: in each interval   $J_{2,i} \in \mathcal J_2 \setminus \J_1$, there exists at least one point $x_{2,i}$ such that
$$ x_{2,i} \in J_{2,i}, \ \ g^r(x_{2,i} ) = x_{2,i},  \ \ x_{2,i} \not \in  \overline {J_{2,j}} \ \ \forall \ j \neq i, \ \ x_{2,i} \not \in   { \bigcup_{J  \in \J_1} J }.$$
Besides, take $$x_{2,i} \not \in \overline{{ \bigcup_{J  \in \J_1} J }} \ \ \ \forall \ 1 \leq i \leq h_2.$$
In fact, if for some $i$ all the points that are fixed by $g$ and belong to $J_{2,i} \setminus { \bigcup_{J  \in \J_1} J }$, also belonged to $\partial { (\cup_{J  \in \J_1} J )}$, we would slightly  enlarge the corresponding interval $J_{1,j} \in \mathcal J_1$ so its boundary has not   fixed points by $g^r$. We would  not be changing $g$ anywhere yet; but just slightly enlarging an open interval $J_{1, j}$ of the old covering $\mathcal J_1$ that was constructed in step 1. After that, we would remove the interval $J_{2,i}$ from the new covering $\mathcal J_2$.

  Denote    $$F_2 : = \{x_{2,i}: 1 \leq i \leq h_2\}, \ \ \ \ \ (F_2)^r \defeq \bigcup_{j= 0}^{r-1} f^j(F_2).$$
In each  $J_{2,i} \in \mathcal J_2 \setminus \mathcal J_1$ choose an open interval $I_{2, i}$ such that $$x_{2,i} \in I_{2,i}, \ \ \ \  \overline {I_{2,i}} \subset V_{2,i} \defeq J_{2,i}\setminus \Big(\big(\overline{\cup_{J \in \J_1} J}\big)  \cup \ \big(\cup_{j \neq i} \overline {J_{2,j}}\big )\Big),  \mbox{ and}$$ $$  (F_2)^r    \bigcap \  \overline{I_{2,i}} = \{x_{2,i}\}.$$
Observe that the function $g$ that was constructed on the first step, coincides with $f $ in $  \overline{V_{2,i}}$ for all $ 1 \leq i \leq h_2.$ Now, we are going to change $f$, only inside each open interval $V_{2,i}$, as follows:

For each $1 \leq i \leq h_1$, define a continuous piecewise affine map  $$g:  \overline {V_{2,i}} \mapsto \Big(f(x_{2,i}) - \frac{\e}{2}, \ \ \  f(x_{2,i}) - \frac{\e}{2} \Big )$$   with a minimum number of affine pieces, and  such that

(a) $g(x) = f(x)$ if $x   \in \partial V_{2,i}$ or if $x \in V_{2,i} \bigcap  (F_2) ^r$.

\hspace{.5cm} In particular $g(x_{2,i})= f(x_{2,i})$.

(b) $g(x) = f(x_{2,i})$ if $x \in \overline {I_{2,i}}.$

\vspace{.2cm}

 Similarly as we argued in the first step,  we have $|g(x) - f(x)| < \e$ for all $x \in V_{2,i}$ for all $1 \leq i \leq h_2$ (while we already had $|g(x) - f(x)| < \e$ for all $x \not \in \bigcup_{i= 1}^{h_2} V_{2,i}$).

 By construction, each interval $I_{2,i}$ is periodic shrinking for $g$ with period that divides $r$. Thus we have constructed an $\e$-perturbation $g$ of $f$ and a finite family $\J_2$ of open intervals that satisfies Conditions (2) and (3) of Definition \ref{definitionP_q,r} for $g$.  Either $\J_2$ also satisfies Condition (1) for $g$, or not. In the first case, define  ${\mathcal U}_{q,r}\defeq\J_2$  and the proof of Lemma \ref{lemmaP_qrIsDense} is finished. In the second case,
  we should continue with Step 3, and construct the collection $\J_3$ in a similar way.

  Note that the procedure must finish in a finite number of steps, because in the passage from the collection $\J_i$ to the collection $\J_{i+1}$ we add at least one interval of  the finite collection $\J$.
\end{proof}

\section{pseudo-physical measures with finite large entropy}

\begin{definition}
\label{DefinitionHorseshoe} \em

Fix  $f \in \C$ and $m \geq 2$.
An \em $m$-horseshoe  for $f$ \em is a family of $m$ pairwise disjoint closed intervals $\overline I_{i}, \  1 \leq i \leq m$, with nonempty interiors $I_i$, such that
\begin{equation} \label{eqn20}\mbox{int}(f( I_i)) \supset \overline I_1 \cup \overline I_2 \cup \ldots \cup \overline I_m \ \ \ \forall \ 1 \leq i \leq m.\end{equation}
\end{definition}
Condition (\ref{eqn20}) is persistent under small perturbations of the map $f$, i.e., if $\{\overline I_{i}\}_{1 \leq i \leq m}$ is an $m$-horseshoe  for $f $, then, the \em same \em family of intervals is also an $m$-horseshoe for all $g \in \C$ close  enough to $f$.

\begin{definition} \label{DefinitionAtomsOfHorseshoe} \em We   define the  {\em atoms of a horseshoe} in a non-standard way.

\noindent{\bf Atoms of generation 1. } Let $\{\overline I_{i}\}_{1 \leq i \leq m}$ be an $m$-horseshoe for $f$. We call each interval $\overline I_i$ \em an atom of generation   \em 1 of the horseshoe.  We denote by ${\mathcal A}_1$ the family of  atoms of generation 1. We have $\#{\mathcal A}_1 = m$.

\noindent{\bf Atoms of generation 2. }
By the definition of horseshoe, we have 
at least one closed interval $\overline I_{i, j} \subset  I_i$, with nonempty interior $I_{i,j}$, such that $\mbox{int}(f(  I_{i,j})) \supset  \overline I_j$.
For each  $i,j$ we \emph{choose} one and only one interval $\overline I_{i,j}$ satisfying this condition. We call such an interval, \em an atom of generation \em 2  of the horseshoe. We denote by ${\mathcal A}_2$ the family of  atoms of generation 2. Note that $\#{\mathcal A}_2 = m^2$.

%
%
 \noindent{\bf Atoms of generation n. } The family ${\mathcal A}_n$ of atoms of generation $n \geq 2$
  is composed by $m^n$ pairwise disjoint compact intervals with nonempty interiors, such that  each  $ \overline I_{i_1, i_2, \ldots, i_n} \in {\mathcal A}_n$ is identified by a different $k$-uple $(i_1, i_2, \ldots, i_n) \in \{1, 2, \ldots, m\}^n$, and satisfies the following properties:
$$\overline I_{i_1, i_2, \ldots, i_n}  \subset  I_{i_1, i_2, \ldots, i_{n-1}} \cap f^{-1} (I_{i_2, \ldots, i_{n-1}}) \subset  I_{i_1} \cap f^{-1}(I_{i_2}) \cap \ldots \cap f^{-(n-2)}(I_{n-1}) ;  $$ $$
 \mbox{int}(f(\overline I_{i_1, i_2, \ldots, i_n})) \supset  \overline I_{i_2, \ldots, i_n} \in {\mathcal A}_{n-1}.
 $$

For each generation $n \geq 2$,  once the family ${\mathcal A}_{n-1}$ is defined, there may exist many possible choices of the family ${\mathcal A}_{n}$ of atoms of the next generation, since there may exist many adequate connected components of  the set  $ I_{i_1, i_2, \ldots, i_{n-1}} \cap  f^{-1} (I_{i_2, \ldots, i_{n-1}}).$
\emph{Throughout the article when we speak of a horseshoe, we fix such a choice.}
\end{definition}
\begin{definition} \em
Fix  $m \ge 2$ and $f \in \C$ exhibiting an $m$-horseshoe.
The  \emph{$\Lambda$-set} of the horseshoe is defined by
$$\Lambda_n \defeq \bigcup_{A \in {\mathcal A}_n} A, \ \ \ \ \
 \Lambda = \bigcap_{n \geq 1} \Lambda_n.$$

 Observe that  $\Lambda_n$ is compact and $\Lambda_{n+1} \subset \Lambda_n$ for all $n \geq 1$. Thus, $\Lambda$ is a nonempty compact set. It is a perfect set. Nevertheless $\Lambda$ is not necessarily totally disconnected since it may contain intervals, because the diameters of the atoms of generation $n$ do not necessarily go to zero as $n \rightarrow + \infty$. So, $\Lambda$ is not necessarily a Cantor set.

\end{definition}

\begin{definition} \em \label{definitionHyperbolicHorseshoe}

Fix $m \ge 2$ and $f \in \C$ with an $m$-horseshoe.
We call the $m$-horseshoe
 \em $C^0$-hyperbolic \em if there exists  $\lambda \in (0,1)$ such that
$$\max_{A \in {\mathcal A}_n} \mbox{length}(A) < \lambda^n \ \ \ \ \ \forall \ n \geq 1.$$

%

Note that if the horseshoe is $C^0$-hyperbolic, and more generally if the maximum diameter of the atoms of generation $n$ goes to zero as $n \rightarrow + \infty$, then the  $\Lambda$-set of the horseshoe is a Cantor set.
\end{definition}

The following is a well known result on the existence of Benoulli measures supported on $\Lambda$-sets of hyperbolic horseshoes:
\begin{proposition} \label{propositionBernoulliMeasOnHorseshoe}
Assume that $f \in \C$ exhibits a $C^0$-hyperbolic $m$-horseshoe. Then, there exists an $f$-invariant ergodic measure $\mu$ supported on the  $\Lambda$-set of the $m$-horseshoe such that $$\mu(A) = \frac{1}{m^n} \  \ \ \forall \ A \in {\mathcal A}_n, \ \ \ \ \forall \ n \geq 1, \ \ \mbox{and}$$
$$h_{\mu}(f) = \log m.$$
\em
\end{proposition}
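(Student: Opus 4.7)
The plan is to code the restriction of $f$ to the $\Lambda$-set by the full one-sided shift $\sigma$ on $m$ symbols, and then push the uniform Bernoulli measure $\nu$ forward. Let $\Sigma \defeq \{1,\dots,m\}^{\N}$ with the product topology, and define the coding map $\pi\colon \Sigma \to \Lambda$ by
\begin{equation*}
\pi(i_1,i_2,\dots) \defeq \bigcap_{n\geq 1} \overline{I_{i_1,i_2,\dots,i_n}}.
\end{equation*}
Since the nested sequence $\{\overline{I_{i_1,\dots,i_n}}\}_{n\geq 1}$ is decreasing and, by $C^0$-hyperbolicity, $\mbox{length}(\overline{I_{i_1,\dots,i_n}}) < \lambda^n \to 0$, this intersection is a single point; so $\pi$ is well-defined and lands in $\Lambda$.

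First I would verify that $\pi$ is a homeomorphism of $\Sigma$ onto $\Lambda$ that satisfies $f\circ\pi = \pi\circ\sigma$. Continuity is immediate: if two sequences agree on the first $n$ coordinates their images lie in a common atom of generation $n$, of length less than $\lambda^n$. Injectivity follows from the fact that the $m^n$ atoms of generation $n$ are pairwise disjoint, so two distinct symbolic sequences produce points that are separated at some finite generation. Surjectivity follows from the definition of the $\Lambda$-set: every $x \in \Lambda$ has at each generation $n$ a unique atom $\overline{I_{i_1,\dots,i_n}}$ containing it, the nestedness of atoms guarantees that the indices are consistent, and so $x$ has a well-defined itinerary $\pi^{-1}(x)$. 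Since $\Sigma$ is compact and $\pi$ continuous and bijective, $\pi$ is a homeomorphism. The semiconjugacy $f\circ\pi=\pi\circ\sigma$ follows from the defining inclusion $\mbox{int}(f(I_{i_1,\dots,i_n})) \supset \overline{I_{i_2,\dots,i_n}}$, which forces $f(\pi(i_1,i_2,\dots))$ to have itinerary $(i_2,i_3,\dots)$.

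Next, let $\nu$ be the Bernoulli $(1/m,\dots,1/m)$ measure on $\Sigma$ and set $\mu \defeq \pi_*\nu$. Since $\nu$ is $\sigma$-invariant and ergodic, the semiconjugacy yields that $\mu$ is $f$-invariant; and because $\pi$ is in fact a measurable isomorphism (being a homeomorphism), $\mu$ is ergodic and $(f,\mu)$ is measure-theoretically conjugate to $(\sigma,\nu)$. The support of $\mu$ is contained in $\Lambda$. For any atom $A = \overline{I_{i_1,\dots,i_n}} \in {\mathcal A}_n$, its $\pi$-preimage is precisely the cylinder $[i_1,\dots,i_n] \subset \Sigma$ (by injectivity of $\pi$ and the fact that symbols record the unique atom sequence), so
\begin{equation*}
\mu(A) = \nu([i_1,\dots,i_n]) = \frac{1}{m^n}.
\end{equation*}

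Finally, the entropy: since $(f,\mu)$ and $(\sigma,\nu)$ are measure-theoretically conjugate, their Kolmogorov--Sinai entropies coincide, and $h_\nu(\sigma)=\log m$ is the standard entropy of the uniform Bernoulli shift on $m$ symbols (computable, for example, via the generating partition into cylinders $[i]$, $1\leq i\leq m$). Hence $h_\mu(f)=\log m$. The main subtlety to be careful about is the homeomorphism claim for $\pi$: once the $C^0$-hyperbolicity forces atom diameters to zero and the disjointness of atoms of each generation is in hand, injectivity, continuity, and surjectivity all work out smoothly, and everything else reduces to standard symbolic dynamics.
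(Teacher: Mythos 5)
The paper itself gives no proof of this proposition — it is stated as a ``well known result'' on Bernoulli measures for hyperbolic horseshoes. Your argument is exactly the standard symbolic-coding proof the authors have in mind (and, tellingly, the paper's own proof of Theorem~\ref{theoremMeasureForCascades} spells out the same coding strategy in the more elaborate triangular-matrix setting). So your proposal correctly supplies the omitted argument.

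One small inaccuracy worth fixing in the write-up of the semiconjugacy. You justify $f\circ\pi=\pi\circ\sigma$ by appealing to the covering inclusion $\mbox{int}(f(I_{i_1,\dots,i_n}))\supset\overline I_{i_2,\dots,i_n}$, but that inclusion alone does not constrain where $f(x)$ lands for an individual $x\in\overline I_{i_1,\dots,i_n}$ (the image could a priori fall in a different atom of generation $n-1$, particularly since $f$ is only $C^0$ and need not be monotone on atoms). The covering inclusion is what guarantees that an atom $\overline I_{i_1,\dots,i_n}$ exists for every word, hence surjectivity of $\pi$. The semiconjugacy instead follows from the other half of Definition~\ref{DefinitionAtomsOfHorseshoe}: $\overline I_{i_1,\dots,i_n}\subset f^{-1}(I_{i_2,\dots,i_{n-1}})$, which gives $f(\pi(\vec i))\in I_{i_2,\dots,i_{n-1}}$ for every $n$; since diameters shrink to zero this pins down $f(\pi(\vec i))=\pi(\sigma\vec i)$. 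The same containment also shows $f(\Lambda)\subset\Lambda$, which you implicitly use when passing from $h_\mu(f|_\Lambda)$ to $h_\mu(f)$; it is worth stating. With those small repairs, everything else — bijectivity of $\pi$ from disjointness and $C^0$-hyperbolicity, pushforward of the uniform Bernoulli measure, the atom-measure identity via cylinders, and the entropy transfer through the topological conjugacy — is correct and is the standard route.
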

 The measures $\mu$ of Proposition \ref{propositionBernoulliMeasOnHorseshoe} are called \em   Bernoulli measures. \em

\begin{theorem} \label{theorempseudo-physicalMeasEntropyLog_m}

A typical map $f \in \C$ has the following properties:

\vspace{.2cm}

\noindent {\rm (i) } For any  $x_0 \in [0,1]$ fixed by $f$,   for any $\e> 0$, and for any $m \geq 2$, there exists a $C^0$-hyperbolic $m$-horseshoe
supported on a Cantor set contained in the $\e$-neighborhood of $x_0$.

\vspace{.2cm}

\noindent {\rm (ii) }For any  $m \geq 2$, there exists infinitely many distinct Bernoulli measures with entropy equal to $\log m$. Every Bernoulli measure is
 ergodic and hence pseudo-physical for $f$.
\end{theorem}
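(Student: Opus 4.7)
The plan is to prove (i) via a standard open-dense construction, and then derive (ii) from (i) together with Proposition~\ref{propositionBernoulliMeasOnHorseshoe} and Theorem~\ref{TheoremSRB-l=ClosureErgodic}. For (i), for each $m \ge 2$, $q \ge 1$, and each ordered pair $(U,V)$ of open intervals with rational endpoints satisfying $\overline U \subset V \subset [0,1]$, I would define $\mathcal H_{m,q,U,V}$ as the set of $f \in \C$ satisfying the dichotomy: either (a) $f$ has no fixed point in $\overline U$, or (b) $f$ admits a $C^0$-hyperbolic $m$-horseshoe with hyperbolicity constant $\lambda \le 1/q$ whose $\Lambda$-set is contained in $V$. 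Condition (a) is open because $f \mapsto \min_{x\in\overline U}|f(x)-x|$ is continuous, and (b) is open by the persistence remark following Definition~\ref{DefinitionHorseshoe} together with continuous dependence of atom lengths on $f$. So $\mathcal H_{m,q,U,V}$ is open, and the typical set $\mathcal H \defeq \bigcap_{m,q,U,V} \mathcal H_{m,q,U,V}$ is a countable intersection.

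For density of $\mathcal H_{m,q,U,V}$: given $f \in \C$ and $\e > 0$, if $f$ has no fixed point in $\overline U$ we are already in case (a). Otherwise pick a fixed point $x_0 \in U$ (if $x_0 \in \partial U$, first pass to a slightly enlarged $U$ still satisfying $\overline U \subset V$). I would $\e$-perturb $f$ on a small interval $J = [x_0-\delta, x_0+\delta] \subset U$ to a continuous map $g$ that agrees with $f$ on $\partial J$ and outside $J$, takes values in $J$, and is piecewise affine on $J$, containing $m$ disjoint closed sub-intervals $\overline{I_1}, \dots, \overline{I_m} \subset J$ on each of which $g$ is affine with slope of magnitude $\mu \ge q+1$ mapping $\overline{I_i}$ onto $J$; the remaining connecting affine pieces stay inside $J$. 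This produces an $m$-horseshoe in the sense of Definition~\ref{DefinitionHorseshoe}; the $n$-th generation atoms have length at most $|J|\mu^{-(n-1)}$, yielding $C^0$-hyperbolicity with constant $1/\mu \le 1/q$. Since $f(x_0)=x_0$ and $f$ is continuous, $f(J) \subset B(x_0,\eta)$ with $\eta=\eta(\delta) \to 0$, so $\|g-f\|_\infty < \e$ for $\delta$ small enough. For any $f \in \mathcal H$, any fixed point $x_0$ and any $\e>0$: pick $(U,V)$ from the basis with $x_0 \in U$ and $V \subset B(x_0,\e)$; the dichotomy forces (b), producing for every $m,q$ a $C^0$-hyperbolic $m$-horseshoe inside $V$, whose $\Lambda$-set is a Cantor set by the geometric decay of atom diameters. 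This proves (i).

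For (ii), every $f \in \C$ has at least one fixed point $x_0$. Applying (i) to the sequence $\e_k = 2^{-k}$ gives $C^0$-hyperbolic $m$-horseshoes $H_k$ with Cantor $\Lambda$-sets $\Lambda_k \subset B(x_0,\e_k)$. Proposition~\ref{propositionBernoulliMeasOnHorseshoe} supplies an ergodic Bernoulli measure $\mu_k$ supported on $\Lambda_k$ with $h_{\mu_k}(f) = \log m$. If only finitely many $\Lambda_k$ were distinct, some single $\Lambda$ would satisfy $\Lambda \subset B(x_0,\e_k)$ for arbitrarily small $\e_k$, forcing $\Lambda \subset \{x_0\}$, contradicting $\#\Lambda \ge m \ge 2$. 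Hence the $\mu_k$ include infinitely many distinct measures. Bernoulli measures are ergodic, and by Theorem~\ref{TheoremSRB-l=ClosureErgodic} every ergodic measure is pseudo-physical for typical $f$; so each $\mu_k$ is pseudo-physical.

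The main obstacle is the density step of (i): one must simultaneously take $\mu$ large (to ensure $C^0$-hyperbolicity with constant $\le 1/q$) and keep the perturbation $C^0$-small, so the ordering $\delta \ll \e$ and $\mu \gg q$ must be interleaved carefully, using the continuity of $f$ at the fixed point to obtain $f(J) \subset B(x_0,\eta(\delta))$ with $\eta(\delta)\to 0$. Once the sizes are correctly chosen, continuity of $g$ at $\partial J$, the horseshoe covering condition, and the disjointness and shrinking of the atoms are all visible from the piecewise-linear construction.
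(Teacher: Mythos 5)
Your overall strategy is the same as the paper's: a countable family of ``nice'' subsets of $\C$, density via a piecewise-affine perturbation near a fixed point producing an $m$-horseshoe with large slope, and part (ii) derived from (i) together with Proposition~\ref{propositionBernoulliMeasOnHorseshoe} and Theorem~\ref{TheoremSRB-l=ClosureErgodic}. The parametrization by $(U,V)$ with the dichotomy is a clean variant of the paper's parametrization by coverings of the fixed-point set, and the application step (pick $(U,V)$ with $x_0\in U$, $V\subset B(x_0,\e)$, rule out case (a)) is correct. Your derivation of (ii) is also fine, though you should spell out why two distinct Cantor supports give distinct Bernoulli measures (full support on a Cantor set plus disjointness of an open piece).

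The genuine gap is the claim that $\mathcal H_{m,q,U,V}$ is \emph{open}, and specifically that alternative (b) is open ``by the persistence remark following Definition~\ref{DefinitionHorseshoe} together with continuous dependence of atom lengths on $f$.'' The persistence remark covers only the finite horseshoe inclusion (\ref{eqn20}); $C^0$-hyperbolicity is an infinite-codimension requirement $\max_{A\in\mathcal A_n}\mathrm{length}(A) < \lambda^n$ for \emph{all} $n\geq 1$, and this is \emph{not} an open condition in $\C$. Concretely: start from your piecewise-affine model $g$ with slope $\geq q+1$ and take an arbitrarily $C^0$-small perturbation that introduces a plateau deep inside a high-generation atom, at a value interior to the target atom. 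Any atom of that generation mapping onto the target must then straddle the plateau, and by nesting, all atoms in that cylinder at deeper generations also contain it, so their lengths are bounded below by the plateau length and eventually exceed $\lambda^n$. Therefore no single choice of atom families makes the perturbed map $C^0$-hyperbolic with the same $\lambda$, which breaks openness. The paper's proof of this theorem does \emph{not} assert openness of $\mathcal B_{q,m}$: it proves the class is a $G_\delta$ set by building, for each $f$ and each $n$, a shrinking neighborhood $U_n(f)$ on which only the finitely many conditions through generation $n$ persist (with the \emph{same} atom families), and then intersecting over $n$. To repair your argument, replace the openness claim by the $G_\delta$ statement: define $\mathcal H_{m,q,U,V,N}$ using ``$C^0$-hyperbolic up to level $N$ with constant $1/q$'' (which \emph{is} open, since it involves only finitely many strict inequalities on finitely many fixed intervals, cf.\ Equation~(\ref{short})), and take the intersection over $N$; your density argument already lands you inside all the $\mathcal H_{m,q,U,V,N}$ simultaneously, so density of the intersection is unaffected.
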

\begin{proof}
A typical map has infinitely many periodic points.  Thus
 (ii) is a direct consequence of  (i), Proposition \ref{propositionBernoulliMeasOnHorseshoe} and Theorem \ref{TheoremSRB-l=ClosureErgodic}. So, it is enough to prove (i).  To do so, we  need the following definition:

\begin{definition} \em
\label{definitionB_{q,n,m}} Fix  $q,   m \geq 2$.
We say that  $f \in \C$ belongs to the class ${\mathcal B}_{q,m} \subset \C$ if
there exists a finite covering with open intervals $J_1, J_2, \ldots, J_h $ of the set of fixed points of $f$, such that:

a) $\mbox{length}(J_i) \leq 1/q$ for all $1 \leq i \leq h$; and

b) each interval $J_i$ contains a $C^0$-hyperbolic $m$-horseshoe.
\end{definition}
To prove Theorem \ref{theorempseudo-physicalMeasEntropyLog_m}(i), it is enough to prove that, for each fixed   $q, m \ge 2$, the class ${\mathcal B}_{q, m}$ is a dense  $G_{\delta}$-set in $\C$.

First, let us see that ${\mathcal B}_{q,m}$ is a $G_{\delta}$ set in $\C$. The empty set is a $G_\delta$ set by definition, thus suppose
 ${\mathcal B}_{q,m} \neq \emptyset$, and take $f \in {\mathcal B}_{q,m}$. As already argued in the proof of Theorem \ref{TheoremSRB-l=ClosureErgodic}, the same covering $\{J_1, J_2, \ldots, J_h\}$ of the set of fixed points by $f$,  also covers the set of fixed points by $g$, for any map $g \in \C$ that is close  enough to $f$, say in the open neighborhood $U_0(f) \subset \C$.
 Since $f \in  {\mathcal B}_{q, m}$, for each $1\leq i \leq h$ here exists an $m$-horseshoe $H_i \subset J_i$ .
 Consider  the $m$ compact and pairwise disjoint intervals   $\overline I_1^{i}, \ldots, \overline I_m^{i}$ of the $m$-horseshoe $H_i$. After  Condition (\ref{eqn20}) in the definition of horseshoe, the same family of compact and pairwise disjoint intervals, is also a horseshoe of all $g$ close  enough to $f$, say in the open neighborhood $U^i_1(f) \subset U_0(f)$ of $f$ in $\C$.

Now, let us prove that if $H_i$ is  $C^0$-hyperbolic  for $f$, then it is also  $C^0$-hyperbolic up to level $n$, with the same constant $\lambda \in (0,1)$,  for all $g \in \C$ close  enough to $f$, say in the open neighborhood $U^i_n(f) \subset U^i_{n-1}(f) \subset \ldots \subset U^i_1(f) \subset U_0(f)$.
By hypothesis, for all $1 \leq k \leq n$, any atom $\overline I^i_{i_1, i_2, \ldots, i_k}  \in {\mathcal A}^i_k(f)$ of the horseshoe $H_i$ for $f$, has length strictly smaller than $\lambda_i^k$. From the definition of atoms:
\begin{equation}\label{short}
\overline I^i_{i_1, \ldots, i_k} \subset I^i_{i_1,  \ldots, i_{k-1}} \cap f^{-1} (I^i_{i_2, \ldots, i_{k-1}}),  \ \mbox{int} (f( I^i_{i_1, i_2, \ldots, i_k}) )\supsetneq \overline I^i_{i_2, \ldots, i_k}.
\end{equation}

This finite number of open conditions persists under small perturbations of the map $f$, with the \em same family of atoms \em up to generation $n$, i.e.,
Equation \eqref{short} holds for  for all $g \in \C$ close  enough to $f$, say $g \in U_n^i(f) \subset \C$, for the  \em same \em families ${\mathcal A}^i_k(f)$ of intervals.

 These properties imply that we can choose ${\mathcal A}^i_k(g) = {\mathcal A}^i_k (f)$, for all $1 \leq k \leq n$, if $g \in U_n^i(f)$.  Furthermore, we can choose $U_n^i(f)$ so small such that  the lengths of the atoms of generation $k$ of the horseshoe $H_i$ for any $g \in U_n^i(f)$ are also strictly smaller than $(\lambda_i)^k$, for all $1 \leq k \leq n$, with fixed $n \geq 1$. In other words,  the \em same \em horseshoe $H_i$ for $f$, is $C^0$-hyperbolic up to level $n$, for any $g \in U_n^i(f)$.  Finally define
  $$U_n(f)\defeq \bigcap_{i= 1}^hU_n^i(f) \mbox{  and  } G\defeq \bigcap_{n \geq 1, }^{+\infty} \bigcup_{f \in {\mathcal B}_{q,m}}U_{n}(f).$$
 Trivially ${\mathcal B}_{q,m} \subset G$, and by construction $G \subset {\mathcal B}_{q,m}$, i.e.,  ${\mathcal B}_{q,m}$ is a $G_{\delta}$-set.

Now, let us prove that ${\mathcal B}_{q,m}$ is    dense    in $\C$.  Fix  $f \in \C$ and $\e >0$. Let us construct $g \in {\mathcal B}_{q, m}$ such that $\mbox{dist}(g,f)< \e$.
Fix   $\delta \in (0,\max\{1/q, \e/3\})$ such that, if $d(x,y) < \delta$, then  $d(f(x), f(y)) <  \nolinebreak \e/3$.  Consider a finite covering $\J = \{J_1, J_2, \ldots J_h\}$ of the set of fixed points of $f$, with open intervals $J_i$ of length smaller than $\delta$. Choose this covering to be minimal in the following sense:   each interval $J_i \in \J$ has at least one fixed point by $f$ that does not belong to $\bigcup_{j \neq i} J_j$, and besides $J_i \setminus \bigcup_{j \neq i} J_j$ is a single interval which we will denote $(y_i,z_i)$.

We will change $f$, to construct a new map $g \in \C$, such that $g(x) = f(x)$ for all $x \not \in \bigcup_{i=1}^h J_i$. So, the same family $\J$ will be also be a covering of the fixed points of $g$.

In each open interval $J_i= (a_i, b_i)$ choose an open interval $V_i$ such that  $   V_i \subset J_i \setminus (\cup_{j \neq i} J_j)$, and take $m+1$ points $x_{i,0} < x_{i,1} < \ldots <x_{i,m}  $ in $V_i$. Construct the continuous piecewise affine map $g|_{\overline J_i}$, with a minimum number of affine pieces, such that:

$g(x) = f(x)$ if $x \in \partial \big(J_i \setminus (\cup_{j \neq i}  J_j)\big);$

$g(x_{i,l}) = a_i$  if $l$ is even;

$g(x_{i,l}) = b_i$  if $l$ is odd.

The set $[x_{i,0}, x_{i,m}] \bigcap g^{-1} \big( [x_{i,0}, x_{i,m}]  \big)$ has $m$ connected components, they  are compact intervals. We can  slightly enlarge these  intervals to create an  $m$-horseshoe for $g$ contained in $V_i \subset J_i$.
So,  $g \in {\mathcal B}_{q,m}$.

Next we will show that  $\mbox{dist}(g,f)< \e$.
For any   point $x  \in J_i \setminus (\cup_{j \neq i} J_j) \defeq (y_i, z_i)$, we have either (i) $y_i < x \leq x_{i,0}$, or (ii) there exists $0 \leq l \leq m-1$   such that $x_{i,l} < x \leq x_{i, \  l+1}$,
or (iii) $x_{i,m} < x \leq z_i$.
 In case (i) let $x = \alpha x_{i,0} + (1-\alpha) y_i$ for some $\alpha \in [0,1]$.  By linearity
 $g(x) = \alpha g(x_{i,0}) + (1-\alpha) g(y_i)$.  Remembering that $g(y_i) = f(y_i)$ yields
 \begin{equation*}
\begin{split}
|g(x) - f(x)| & \leq |g(x) - g(y_i)| + |g(y_i) - f(x)|\\
& = \alpha |  g(x_{i,0})  -  g(y_i) | + |f(y_i) - f(x)|.
\end{split}
\end{equation*}

Since both $y_i$ and $x$ belong to the interval $[a_i,b_i]$ which has length at most $\delta$, the second term is less than $\e/3$, so we need to give an upper bound on the first term.  We use the fixed point $f(x_i) = x_i$ and once again the fact that $f(y_i) = g(y_i)$ to obtain
\begin{equation*}
\begin{split}
 |  g(x_{i,0})  -  g(y_i) | & \le  |g(x_{i,0}) - x_i| + |f(x_i) - f(y_i)|\\
 & =  |a_i - x_i| + |f(x_i) - f(y_i)|  < \delta + \frac{\e}3.
\end{split}
\end{equation*}
Combining these estimates yields $|g(x) - f(x)| < \e$ as needed. 
Case (iii) is similar to case (i), using $z_i$ instead of $y_i$ and $x_{i,m}$ instead of $x_{i,1}$. Analogously, for case (ii) use  $x_{i,l}$ instead of $y_i$ and $x_{i,l+1}$ instead of $x_{i,1}$.

Recall that  from the choice of the covering $\J$, for any $1 \leq i \leq h$ there exists a point $x_i \in   J_i \setminus (\cup_{j \neq i} J_j)$ such that $f(x_i)= x_i$. Thus, in
Case (ii) , we write:
\begin{equation*}
\begin{split}
|g(x) - f(x)| & \leq | g(x) - f(x_i)| + |f(x_i) - f(x)| \\
&=  | g(x) - x_i| + |f(x_i) - f(x)| \\
& \leq   |b_i -a_i|+ \frac{\e}{3} \leq \delta + \frac{\e}{3} < \e.
\end{split}
\end{equation*}

We conclude that, for fixed natural numbers $q,n,m \geq 2$, for all $f \in \C$ and for all $\e >0$, there exists $g \in {\mathcal B}_{q,m}$ such that $\mbox{dist}(g,f) < \e$. In other words, the family ${\mathcal B}_{q,m}$ of maps is dense in $\C$.
\end{proof}

\begin{corollary} \label{CorollaryNoSemiContinuity}
For a typical map $f \in \C$ the entropy function $\mu \in {\mathcal M}_f \mapsto h_{\mu}(f) \in [0, +\infty]$ is neither upper nor lower semi-continuous. Moreover,  if restricted to the set of pseudo-physical measures, the entropy function is neither upper nor lower semi-continuous.
\end{corollary}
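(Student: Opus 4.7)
The plan is to exhibit explicit sequences of pseudo-physical measures that witness both failures simultaneously, so that both statements of the corollary follow at once. The two key ingredients from the preceding sections are: (a) the abundance of Bernoulli measures with prescribed positive entropy concentrated near fixed points (Theorem \ref{theorempseudo-physicalMeasEntropyLog_m}), and (b) the approximation of every ergodic measure by periodic atomic measures for typical maps (Corollary \ref{corollaryOprocha&als}, or more directly Theorem \ref{TheoremSRB-l=ClosureErgodic}).

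For the failure of upper semi-continuity, I would fix a fixed point $x_0$ of $f$ (which exists for every $f \in \C$) and invoke Theorem \ref{theorempseudo-physicalMeasEntropyLog_m}(i) to produce, for each $n \geq 1$, a $C^0$-hyperbolic $2$-horseshoe whose $\Lambda$-set is contained in the $(1/n)$-neighborhood of $x_0$. Proposition \ref{propositionBernoulliMeasOnHorseshoe} then supplies an ergodic Bernoulli measure $\mu_n$ supported on this $\Lambda$-set with $h_{\mu_n}(f) = \log 2$. Since $\mathrm{supp}(\mu_n)$ shrinks to $\{x_0\}$, we have $\mu_n \to \delta_{x_0}$ in the weak$^*$ topology. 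But $h_{\delta_{x_0}}(f) = 0 < \log 2 = \liminf_n h_{\mu_n}(f)$, violating upper semi-continuity at $\delta_{x_0}$. Every $\mu_n$ is ergodic and $\delta_{x_0}$ is ergodic, so by Theorem \ref{TheoremSRB-l=ClosureErgodic} all these measures belong to $\mathcal{O}_f$; hence the failure holds both on $\mathcal{M}_f$ and on $\mathcal{O}_f$.

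For the failure of lower semi-continuity, I would start from any Bernoulli measure $\mu$ furnished by Theorem \ref{theorempseudo-physicalMeasEntropyLog_m}(ii) with $h_\mu(f) = \log 2$. Since $\mu \in \mathcal{E}_f$ and, for typical $f$, $\mathcal{E}_f \subset \overline{\mbox{Per}_f}$ by Corollary \ref{corollaryOprocha&als}, there is a sequence of periodic atomic measures $\nu_n \in \mbox{Per}_f$ with $\nu_n \to \mu$. Each $\nu_n$ is supported on a single periodic orbit, so $h_{\nu_n}(f) = 0$, while $h_\mu(f) = \log 2$. Thus $\limsup_n h_{\nu_n}(f) = 0 < h_\mu(f)$, contradicting lower semi-continuity at $\mu$. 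Again, both $\mu$ and every $\nu_n$ are ergodic, hence pseudo-physical for typical $f$ (Theorem \ref{TheoremSRB-l=ClosureErgodic}), so the counterexample lives inside $\mathcal{O}_f$.

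There is no real obstacle here: the corollary is a direct repackaging of the earlier structural results. The only points that require care are checking that the approximating measures converge weak$^*$ to the intended limits (immediate from shrinking supports in one case, and from $\mathcal{E}_f \subset \overline{\mbox{Per}_f}$ in the other) and confirming that every measure used in the constructions is pseudo-physical, which is precisely the content of the inclusion $\mathcal{E}_f \subset \mathcal{O}_f$ established in Theorem \ref{TheoremSRB-l=ClosureErgodic}.
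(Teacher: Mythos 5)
Your proposal is correct and follows essentially the same route as the paper: Bernoulli measures on horseshoes accumulating on $\delta_{x_0}$ to kill upper semi-continuity, and periodic atomic measures accumulating on a fixed-entropy Bernoulli measure (via $\mathcal{E}_f \subset \overline{\mbox{Per}_f}$, i.e.\ Corollary \ref{corollaryOprocha&als} / Theorem \ref{TheoremSRB-l=ClosureErgodic}) to kill lower semi-continuity, with ergodicity of all measures involved yielding the pseudo-physical refinement. The only cosmetic difference is that you fix $m=2$ while the paper keeps $m$ general, and you argue weak$^*$ convergence directly from shrinking supports where the paper explicitly cites Lemma \ref{lemma-dist}; both are the same reasoning.
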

\begin{proof} We apply Theorem \ref{theorempseudo-physicalMeasEntropyLog_m}, fix  $m \geq 2$, and an atomic invariant measure $\delta_{x_0}$, where $x_0$ is a fixed point of $f$. There exists a sequence $\mu_n$ of Bernoulli invariant measures supported on  $\Lambda$-sets of $m$-horseshoes, that are arbitrarily near the fixed point $x_0$. From Lemma \ref{lemma-dist}, $\lim \mu_n = \delta_{x_0}$ in the weak$^*$ topology.      We deduce that $0 = h_{\delta_{x_0}} (f) <\lim_n h_{\mu_n}(f) = \log m $, with $\mu_n \rightarrow \delta_{x_0}$, proving  that the entropy function is not upper semi-continuous.
 For the upper semi-continuity statement restricted to pseudo-physical mesures it suffices to remember that
Bernoulli measures $\mu_n$ are  ergodic, hence pseudo-physical.

On the other hand, each Bernoulli measure $\mu$ supported on the $\Lambda$-set of an $m$-horseshoe  is ergodic. Thus by Theorem \ref{TheoremSRB-l=ClosureErgodic} $\mu $ is pseudo-physical, and there exists a sequence of measures $\nu_n  \in \mbox{Per}_f $ convergent to  $ \mu$ in the weak$^*$-topology. We conclude that $\log m = h_{\mu}(f) > \lim _n h_{\nu_n}(f) = 0$, with $\nu_n \rightarrow \mu$, proving that the entropy function is not lower semi-continuous. For the lower semi-continuity statement restrict to pseudo-physical mesures it suffices to remember that each measure in $\mbox{Per}_f$ is ergodic, and thus pseudo-physical.
\end{proof}

\section{pseudo-physical measures with infinite entropy}

 In the sequel we will use the following notation:

\noindent  $\bullet$  $[t]_n$ or $[s]_n$ will denote   triangular ``matrices''  with $n $ rows as follows: $$[t]_n  = \left[
                                                \begin{array}{cccccc}
                                                  t_{1,1} & t_{1,2} &t_{1,3} &\ldots & t_{1, n-1} & t_{1,n} \\
                                                  t_{2,1} & t_{2,2}  &\ldots & t_{2, n-2}& t_{2,  n-1} &   \\
                                                  t_{3,1} &\ldots &t_{3,n-3}   & t_{3, n-2} & &   \\
                                                    \ldots &\ldots &\ldots   & & &   \\
                                                    \ldots & \ldots& & & &  \\
                                                  t_{n,1}&   &   &  & &  \\
                                                \end{array}
                                              \right], $$ where  $t_{i,j} \in \{0,1\}$ for all $(i,j)$ such that $1 \leq i \leq n, \ 1 \leq j \leq n+1-i$.

\noindent   In particular $[t]_1$ denotes a single number $t_{1,1} \in \{0,1\}$.

\noindent  $\bullet$  $T_n$ will denote the set of all the triangular matrices with $n$ rows described above. Note that $\# T_n = 2 ^{n(n+1)/2}$.

\noindent  $\bullet$  $\pi_n$ and $\sigma_{n}: T_{n+1} \mapsto T_{n}$  will denote the following transformations:
$$\pi_n  [t]_{n+1}  = [s]_{n}, \mbox{ where } s_{i,j} = t_{i,j} \ \ \forall \ 1 \leq i \leq n, \ \ 1 \leq j \leq n+1-i; $$
i.e.,  erase the last diagonal at the right of the triangle matrix $[t]_{n+1}$.
$$\sigma_n  [t]_{n+1}  = [s]_{n}, \mbox{ where } s_{i,j} =  t_{i+1, j} \ \ \forall \ 1 \leq i \leq n, \ \ 1 \leq j \leq n+1-i.$$
i.e., we erase the first column at left of $[t]_{n+1}$.
We call $\pi_n$ the \em projection, \em and $\sigma_n$ the \em shift \em (to the left).

\noindent  $\bullet$   $\overline I_{[t]_n}$ denotes a collection  of $n(n+1)/2$ pairwise disjoint
compact intervals with nonempty interior indexed by the collection of matrices $[t]_n \in T_n$,
and  $ I_{[t]_n}$ denotes the collection of  interiors of these intervals.

\begin{definition} \em
\label{definitionCascade}
An {\em atom doubling cascade  for $f \in \C$}, is  a sequence ${\mathcal A}_1, {\mathcal A}_2, \ldots,{\mathcal A}_n , \ldots $ of families of subintervals in $[0,1]$, satisfying the following properties:

 Each family ${\mathcal A}_n$ is composed by a finite number of pairwise disjoint compact intervals  with nonempty interiors, which are called \em atoms of generation $n$, \em  such that

\noindent $\bullet$ $\max_{\overline I \in {\mathcal A}_n} \mbox{length}(\overline I) \rightarrow 0$ as $n \rightarrow + \infty$.

\noindent $\bullet$ ${\mathcal A}_1 = \{\overline I_0, \overline I_1\}$  is a $2$-horseshoe for $f$, i.e.,
\begin{equation}
\label{eqn21}
\mbox{int}(f(\overline I_t)) \supsetneq \overline I_1 \cup \overline I_2  \mbox{ for } t= 0,1
\end{equation} (see Figure \ref{Figure1}). We have $\#{\mathcal A}_1 = 2$.

\begin{figure}[t]
\includegraphics[scale=.5]{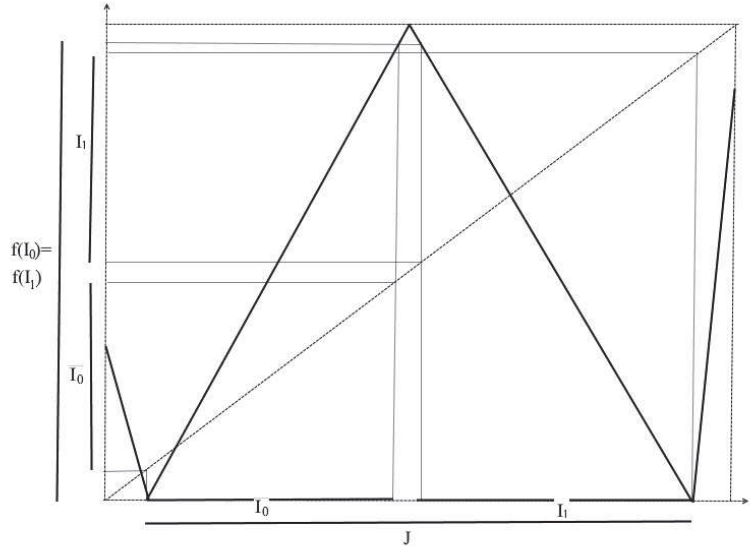}
\caption{\label{Figure1} A 2-horseshoe: The compact subintervals $\overline I_0$ and $\overline I_1$ are disjoint, and $f(I_0)\cap  f(I_1) \supset \overline I_0 \cup \overline I_1$.}
\end{figure}

\noindent $\bullet$ ${\mathcal A}_2 =   \{\overline I_{ [t]_2 }\colon [t]_2 \in T_2
\}$   such that
\begin{equation}
\label{eqn22}\overline I_{ [t]_2 } \subset I_{\pi_1 [t]_2}, \ \ \mbox{ and } \ \
 \mbox{int} ( f(I_{ [t]_2 } )) \supsetneq \overline I_{\sigma_1[t]_2}
 \end{equation}  (see  Figure \ref{Figure2} and Example \ref{ExampleCascade}).  Note that $\#{\mathcal A}_2 = \# T_2 = 2^3  $.

 \begin{figure}[h]
\includegraphics[scale=.5]{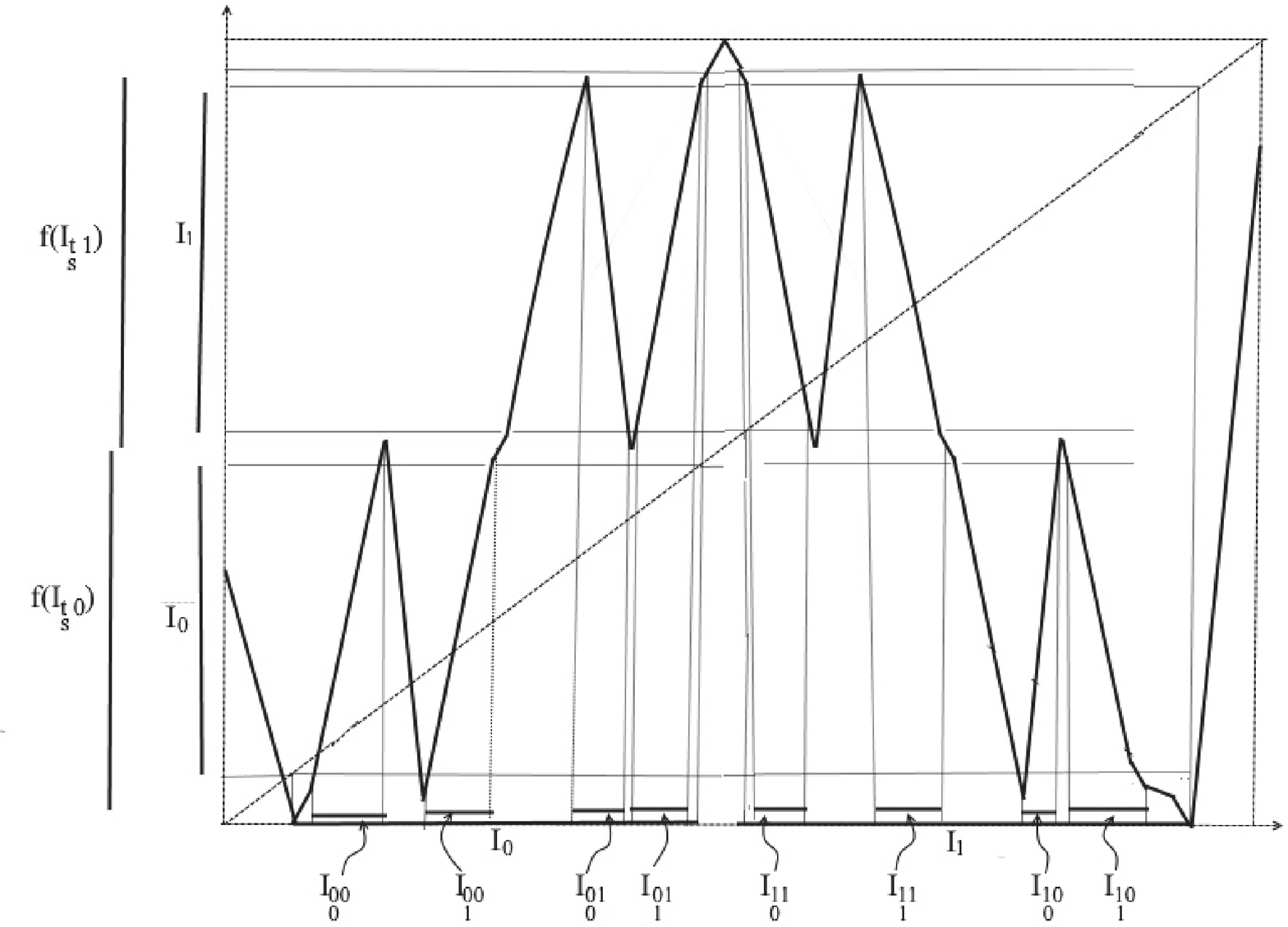}
\caption{\label{Figure2} An atom doubling cascade, up to generation 2. We modify the function of Figure \ref{Figure1} only inside the intervals $I_0$ and $I_1$. Each interval  $I_0$ and $I_1$ contains 4 pairwise disjoint compact subintervals $$ \overline I_ {   \underset{\mathbf{b}}{  0}  { \mathbf{ a}}     }, \ \  \overline I_ {   \underset{\mathbf{b}}{  1}  { \mathbf{ a}}     }$$ respectively, with $(a,b) \in \{0,1\}^2$. These subintervals are the atoms of generation 2. The map $f$ is now defined in these  subintervals such that
for all $(t,s) \in \{0,1\}^2$  $$f (I_ {   \underset{\mathbf{s}}{  \mathbf{t}}  {  { 0}}     } ) \supset \overline I_{0}, \ \ \ f (I_ {   \underset{\mathbf{s}}{  \mathbf{t}}  {  {1}}     } )\supset \overline I_1.$$}
\end{figure}

\noindent $\bullet$ ${\mathcal A}_3 =   \{\overline I_{ [t]_3 }\colon [t]_3 \in T_3
\}$   such that
\begin{equation}
\label{eqn23}\overline I_{ [t]_3 } \subset I_{\pi_2 [t]_3}\cap f^{-1} (I_{\pi_1 \sigma_2 [t]_3}), \ \ \mbox{ and } \ \
 \mbox{int} ( f(I_{ [t]_3 } )) \supsetneq \overline I_{\sigma_2[t]_3}.\end{equation}  (see  Figure \ref{Figure3} and Example \ref{ExampleCascade}).  Note that $\#{\mathcal A}_3 = \# T_3 = 2^6  $.

\noindent $\bullet$ In general, for all $n \geq 3$, ${\mathcal A}_n =   \{\overline I_{ [t]_n }\colon [t]_n \in T_n
\}$   such that
\begin{equation}
\label{eqn24}\overline I_{ [t]_n } \subset I_{\pi_{n-1} [t]_n}\cap f^{-1} (I_{\pi_{n-2} \sigma_{n-1} [t]_n}), \ \ \mbox{ and } \ \
 \mbox{int} ( f(I_{ [t]_n } )) \supsetneq \overline I_{\sigma_{n-1}[t]_n}.\end{equation}   So,  $\#{\mathcal A}_n = \# T_n = 2^{n(n+1)/2}  $.
\end{definition}

\begin{figure}[h]
\includegraphics[scale=.5]{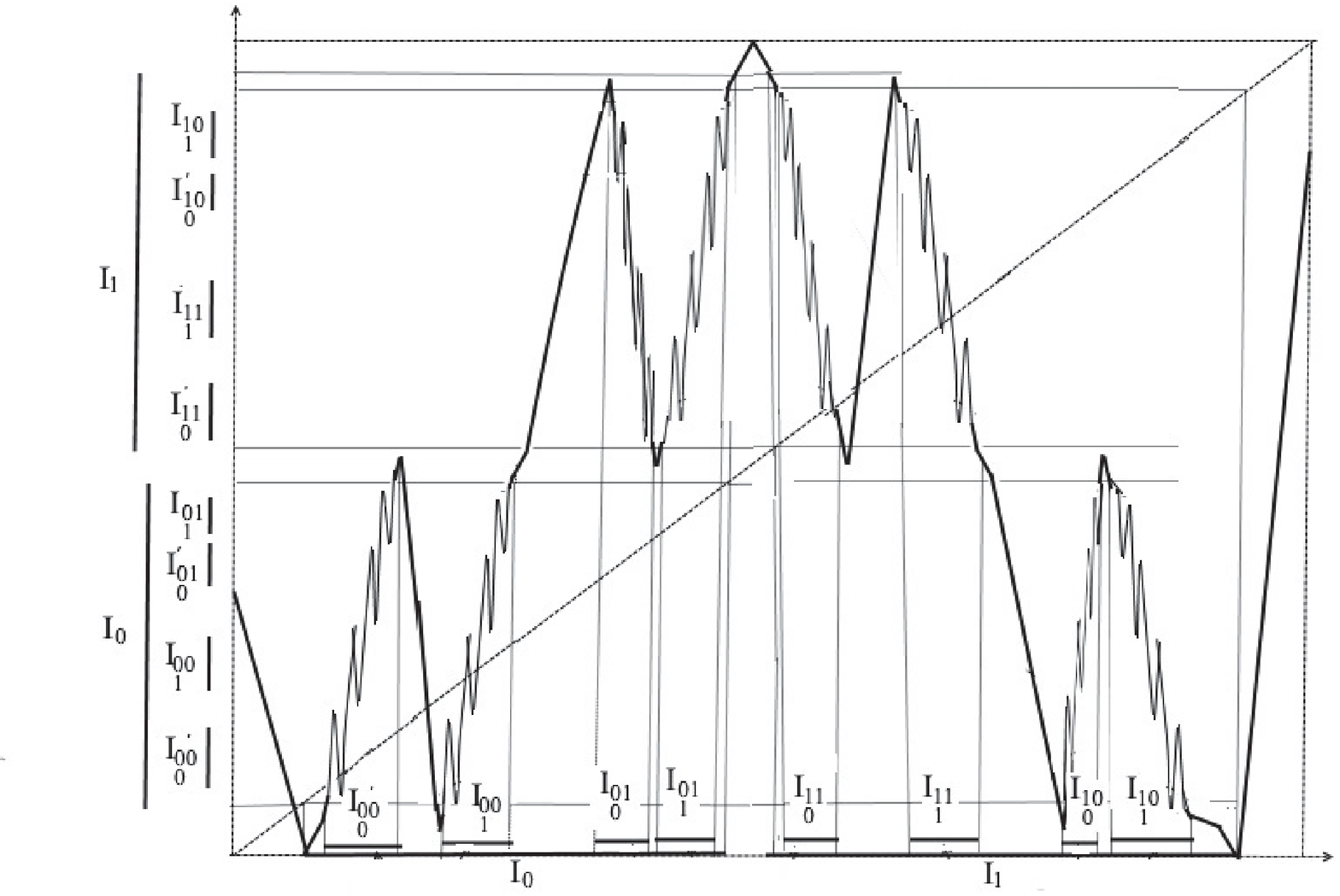}
\caption{\label{Figure3} An atom doubling cascade  up to generation 3. We modify the map $f$ of Figure \ref{Figure2} only inside the intervals $I_ {   \underset{  0}{  0}  {  0}     }, \ I_ {   \underset{  1}{  0}  {  0}}, \ I_ {   \underset{  0}{  0}  {  1}}, \ldots, I_ {   \underset{  1}{ 1}  { 1}}     $  (atoms of generation 2). Each of these intervals now contains 8 subintervals of level 3 (atoms of generation 3).  For instance $$I_ {   \underset{  0}{  0}  {  1}     } \supset   \overline  I_ { \underset{  \mathbf{c}}{ \underset{   0}{  0} } \underset{  \mathbf{ b}}{  1}  { \mathbf{ a}}   }   $$ for all  $  ( {a, b, c}) \in \{0,1\}^3  $. The map $f$ is now defined in these subintervals such that
$$I_{1} \supset  f \Big ( \overline{  I_ {  \underset{\mathbf{c}}{ \underset{0}{  0} } \underset{\mathbf{b}}{  1}  {\mathbf{a}} }  }  \Big)\supset f \Big (   I_ {  \underset{\mathbf{c}}{ \underset{0}{  0} } \underset{\mathbf{b}}{  1}  {\mathbf{a}}   }  \Big) \supset \overline I_ {   \underset{\mathbf{b}}{  1}  { \mathbf{ a}}     } $$
and such that
$$ f \Big (I_ { \underset{0}{ \underset{   0}{  0} } \underset{  \mathbf{ b}}{  1}  { \mathbf{ a}}   }  \Big ) = f \Big ( I_ { \underset{1}{ \underset{   0}{  0} } \underset{  \mathbf{ b}}{  1}  { \mathbf{ a}}} \Big ).$$}
\vspace{-.99cm}
\end{figure}

\begin{example} \em \label{ExampleCascade}
For any three open nonempty intervals $J, I, I'$ such that $  I' \supsetneq \overline I \supsetneq I \supset \overline J$, we will construct a map $f : \overline J \mapsto I' $ exhibiting an atom doubling cascade. The construction  illustrates Definition \ref{definitionCascade} and will be  useful at the end of this section to prove that typical maps in $\C$ exhibit such a cascade.

We will construct the map $f $ 
in several steps.
At  the $n$-th step, we will construct a continuous map $f_n: \overline J \mapsto I'$ that does not have a cascade, but has a finite family of atoms, up to generation $n$, satisfying conditions (\ref{eqn21})--(\ref{eqn24}).  We say that such map $f_n$ \emph{exhibits an atom doubling cascade  up to level $n$}. Finally, the map $f$  will be the uniform limit of the sequence of maps $f_n $.

\noindent{\bf Step 1 (construction of $f_1$).} We have $ \overline J \subset I$. Construct a continuous piecewise affine map $f_1: J \mapsto I$ as in Figure \ref{Figure1} (in this figure, the larger interval containing $\overline J$ is $I$; and the interval $I'$ is not drawn).   This map $f_1$ exhibits a $2$-horseshoe  $\{\overline I_1, \overline I_2\}$, with $\overline I_1, \overline I_2 \subset \overline J \subset I$. We construct $f_1$ so that  $|f_1'(x)| > 2$ for all $x \in   \overline I_1 \bigcup   \overline I_2$.

\noindent{\bf Step 2 (construction of $f_2$).} Take $f_1$ constructed in the first step. We construct $f_2$ such that $f_2(x) = f_1(x)$ if $x \not \in I_1 \bigcup I_2$, and modifying $f_2$ in $I_1  \bigcup I_2$ as in Figure \ref{Figure2}. The new map $f_2$  exhibits an atom doubling cascade $\{{\mathcal A}_1, {\mathcal A}_2\}$  up to generation 2. The family of atoms of generation 1 for $f_2$  and $f_1$ is the same ${\mathcal A}_1 = \{\overline I_0, \overline I_1\}$. Now, we have the family $${\mathcal A}_2 = \{I_{\subalign{&00\\&0}},I_{\subalign{&00\\&1}},I_{\subalign{&01\\&0}},\ldots,I_{\subalign{&11\\&1}}
     \},$$  of  atoms of generation 2, satisfying the conditions of   Definition \ref{definitionCascade}. The new map $f_2$ is continuous piecewise affine, and $$|f_2(x) - f_1(x)| < 2 \cdot \mbox{length}(I_i) <1   \ \ \forall \ x \in I_i, \ \ \mbox{ for } i= 0,1.$$ Besides, we construct $f_2$ such that $|f_2'(x)| > 2^2$ for all $x \in   \bigcup_{A \in {\mathcal A}_2} A$.

\noindent{\bf Step 3 (construction of $f_3$).} Take $f_2$ constructed in the second step. Let us construct $f_3$ such that $f_3(x) = f_2(x)$ if $x \not \in \bigcup_{A \in {\mathcal A_2}} A$, and modifying $f_2$  only in the interior of the atoms of generation 2,  as in Figure \ref{Figure3}. The new map $f_3$  exhibits an atom
doubling cascade $\{{\mathcal A}_1, {\mathcal A}_2, {\mathcal A}_3\}$  up to generation 3. The families ${\mathcal A}_1, {\mathcal A}_2$ of atoms of
generation 1 and 2, for $f_3$  and $f_2$, are the same.  Now, we have the family

\vspace{-.5cm}
$${\mathcal A}_3 \defeq \left \{I_{\subalign{&000\\&00\\&0}},
I_{\subalign{&000\\&00\\&1}},I_{\subalign{&000\\&01\\&0}},I_{\subalign{&001\\&00\\&0}},I_{\subalign{&000\\&01\\&1}},
 \ldots, I_{\subalign{&111\\&11\\&1}}  \right \},$$
  of  atoms of generation 3, satisfying the conditions of   Definition \ref{definitionCascade}. The new map $f_3$ is continuous piecewise affine, and $$|f_3(x) - f_2(x)| < 2 \cdot  {\mbox{length}(A)} < \frac{2}{2^2} = \frac{1}{2} \ \ \forall \ x \in A \in {\mathcal A}_2.$$~Besides, we construct $f_3$ such that $ |f_3'(x)| > 2^3 $ for all $ x \in   \bigcup_{A \in {\mathcal A}_3} A.$

\noindent{\bf Inductive step (construction of $f_{n+1}$ from $f_n$).} Assume that $f_n$ is constructed exhibiting an atom doubling cascade $\{{\mathcal A}_1, {\mathcal A}_2, \ldots, {\mathcal A}_n\}$  up to generation $n$. We construct $f_{n+1}$ such that $f_{n+1}(x) = f_n(x)$ if $x \not \in \bigcup_{A \in {\mathcal A_n}} A$, and modifying $f_n$  only in the interior of the atoms of generation $n$.  So, the families ${\mathcal A}_1, {\mathcal A}_2, \ldots {\mathcal A}_n$ of atoms of generation $1,2, \ldots, n$, for $f_{n+1}$  and $f_n$, are the same.  But we modify $f_n$ inside the atoms of generation $n$ so that, each of these atoms, say $\overline I_{[t]_n}$, contains  exactly $2^{n+1}$ pairwise disjoint atoms of generation $n+1$, say $\overline I_{[s]_{n+1}}$ such that $\pi_n [s]_{n+1} = [t]_n$. We require that the new family $${\mathcal A}_{n+1} = \left \{ \overline I_{[s]_{n+1}} \colon [s]_{n+1} \in T_{n+1} \right \},$$ of  atoms of generation $n+1$ satisfy the conditions of   Definition \ref{definitionCascade}. We construct the new map $f_{n+1}$ to be continuous piecewise affine,  and  $$|f_{n+1}(x) - f_n(x)| < 2 \cdot  {\mbox{length}(A)} < \frac{2}{2^n} = \frac{1}{2^{n-1}} \ \ \forall \ x \in A \in {\mathcal A}_n, $$ $$ |f_{n+1}'(x)| > 2^{n+1} \ \ \forall \ x \in   \bigcup_{A \in {\mathcal A}_n} A.$$
Note that this estimate on the derivative immediately implies that the length of the atoms goes to zero as $n$ tends to infinity.

\noindent{\bf Final Step (taking the limit map). }  We claim that the sequence of maps $f_n: \overline I \mapsto \overline I'$, as was constructed in the previous steps, uniformly converges to a continuous map $f :  \overline I \mapsto \overline I'$ as $n \rightarrow +\infty$.
By construction $$\sup_{x \in \overline I} |f_{n+1}(x)- f_n(x)| < \frac{1}{2^{n-1}} \ \ \forall \ n \geq 1 \ \ \mbox{ and thus}$$
$$\mbox{dist} (f_{n'}, f_n) < \sum_{j= n-1}^{\infty} \frac{1}{2^{j}} = \frac{1}{2^{n-2}} \ \ \forall \ n' \geq n \geq 1;$$
i.e., the sequence $\{f_n\}_{n \geq 1}$ is Cauchy,  hence it converges
to a continuous map $f : \overline I \mapsto \overline I'$.
By construct the map $f$ verifies \eqref{eqn21}-\eqref{eqn24}, thus it exhibits an atom doubling cascade.
\end{example}

\begin{definition} \em Let $f \in \C$ and assume that $f$ exhibits an atom doubling cascade $\{{\mathcal A}_n\}_{n \geq 1}$.
 We denote $$\Lambda_n \defeq \bigcup_{A \in {\mathcal A}_n} A \ \ \forall \ n \geq 1, \ \ \mbox{ and } \ \ \Lambda\defeq \bigcap_{n=1}^{\infty} \Lambda_n.$$ Since $\Lambda_n$ is compact and $\Lambda_{n+1} \subset \Lambda_n$ for all $n \geq 1$,  the  set  $\Lambda$ is nonempty and compact. It is called
  \emph {the $\Lambda$-set} of the cascade.  By construction it is invariant, and since the diameters of the atoms of generation $n$ go to zero as  $n \rightarrow + \infty$,  the $\Lambda$-set is a Cantor set.
\end{definition}

\begin{theorem} \label{theoremMeasureForCascades}
Suppose that $f \in \C$ has an atom doubling cascade, then there exists an $f$-invariant  ergodic measure $\mu$ supported on its $\Lambda$-set, such that $h_{\mu}(f) = + \infty$.
\end{theorem}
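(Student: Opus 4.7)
The plan is to encode $(\Lambda, f|_\Lambda)$ as a shift on the Cantor space $X \defeq \{0,1\}^{\N \times \N}$ with the product topology, and then pull back an infinite-entropy Bernoulli measure. Let $\sigma \colon X \to X$ be the shift $(\sigma t)_{i,j} \defeq t_{i+1, j}$ that drops the first row. Define $\phi \colon \Lambda \to X$ by $\phi(x) \defeq (t_{i,j})_{i,j \ge 1}$, where for every $n \ge 1$ the triangular truncation $(t_{i,j})_{1 \le i \le n,\ 1 \le j \le n+1-i}$ is the unique $[t]_n \in T_n$ such that $x \in \overline I_{[t]_n}$. Consistency of the truncations across $n$ follows from the containment $\overline I_{[t]_{n+1}} \subset I_{\pi_n [t]_{n+1}}$ in Definition \ref{definitionCascade}, and $\phi$ is a homeomorphism of $\Lambda$ onto $X$ because the diameters of atoms of generation $n$ tend to zero. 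The relation $\overline I_{[t]_n} \subset f^{-1}(I_{\pi_{n-2}\sigma_{n-1}[t]_n})$ from \eqref{eqn24}, read for all $n$, amounts to $\phi(f(x))_{i,j} = \phi(x)_{i+1,j}$ for all $i,j$, so $\phi$ conjugates $f|_\Lambda$ with $\sigma$.

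I next put on $X$ the Bernoulli $(1/2,1/2)$ product measure $\nu$, which is $\sigma$-invariant since all coordinates are i.i.d., and define $\mu \defeq \phi^{-1}_* \nu$, regarded as a Borel probability on $[0,1]$ supported in $\Lambda$; then $\mu$ is $f$-invariant. For ergodicity I regroup coordinates by the row index: the identification $X \cong (\{0,1\}^{\N})^{\N}$ via $t \mapsto (r_i)_{i\ge 1}$ with $r_i \defeq (t_{i,j})_{j \ge 1}$ turns $\sigma$ into the standard one-sided shift on the product space with product (Bernoulli) measure $\nu$ and alphabet $\{0,1\}^{\N}$. Any Bernoulli shift is mixing, hence ergodic, so $\mu$ is $f$-ergodic on $\Lambda$.

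For the entropy, let $Q_N$ be the finite partition of $X$ into the $2^N$ cylinders fixing $(t_{1,1},t_{1,2},\ldots,t_{1,N}) \in \{0,1\}^N$. Since $\sigma^{-k}Q_N$ fixes $(t_{k+1,1},\ldots,t_{k+1,N})$, the join $\bigvee_{k=0}^{n-1}\sigma^{-k}Q_N$ is the partition by all values of $(t_{i,j})_{1 \le i \le n,\ 1 \le j \le N}$, with $2^{nN}$ cells of equal $\nu$-measure $2^{-nN}$; its entropy is therefore $nN\log 2$. Hence $h_\nu(\sigma, Q_N) = N\log 2$, and letting $N \to \infty$ gives $h_\nu(\sigma) = +\infty$; by the conjugacy, $h_\mu(f) = +\infty$. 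The main obstacle is the bookkeeping in the first paragraph: one must verify that the operations $\pi_{n-1}$ and $\sigma_{n-1}$ on the triangular matrices $T_n$ really do assemble, in the inverse limit, into the row-shift on $\{0,1\}^{\N\times\N}$. But Definition \ref{definitionCascade} has been tailored exactly so that this identification is automatic, after which the invariance, ergodicity, and infinite entropy are standard facts about Bernoulli shifts.
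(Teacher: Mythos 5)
Your proposal is correct and follows essentially the same route as the paper: conjugate $f|_\Lambda$ to the row-shift $\sigma$ on $\{0,1\}^{\N\times\N}$ via the itinerary of atoms, push the Bernoulli $(1/2,1/2)$ product measure onto $\Lambda$, and compute the entropy of $\sigma$ via finite partitions. Your two verification steps are slightly slicker than the paper's: you prove ergodicity by regrouping rows to identify $(\Sigma,\sigma,\nu)$ with the standard one-sided Bernoulli shift over the alphabet $\{0,1\}^\N$, whereas the paper checks mixing directly on cylinders; and your entropy computation uses the ``rectangular'' partitions $Q_N$ (first $N$ entries of the first row, whose $n$-fold join is an $n\times N$ block of i.i.d.\ bits), giving $h_\nu(\sigma,Q_N)=N\log 2$ immediately, whereas the paper uses the triangular generation-$k$ partitions $\mathcal P_k$ and carries the extra constant $k(k-1)/2$ through the limit. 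Both choices work; yours avoids the bookkeeping with triangular index sets.
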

\begin{proof}
Consider the space of bi-infinite matrices $\Sigma \defeq \{0,1\}^{\mathbb{N}^+ \times \mathbb{N}^+}$.
For any $[t]\in  \Sigma$ we denote
$  [t] = \{t_{i,j}\}_{  i \geq 1, \ j \geq 1}.$
We  extend
the projection map $\pi_n: T_{n+1} \mapsto T_n$ defined at the beginning of this section to the space $\Sigma$ as follows:
 $$ \forall \ [t] = \{t_{i,j}\}_{i,j \geq 1} \in \Sigma, \ \ \pi_n [t] \defeq [t]_n =  \{t_{i,j}\}_{1\leq i \leq n, \ 1 \leq j + i \leq n+1} \in T_n,$$ where the triangular matrix $[t]_n$ with $n$ rows, is obtained from the infinite matrix $[t]$,   taking only its terms of the first $n$ rows, and in the $i$th row, taking only the terms in the first $n+1 -i$ columns.

We also extend the shift map  $\sigma_n: T_{n+1} \mapsto T_n$  to
 the (left) shift  $\sigma: \Sigma \mapsto \Sigma $ as follows:
  $$\forall \ [t] \in \Sigma, \ \ \sigma[t] \defeq [s] \mbox{ iff } \pi_n [s] = \sigma_n \pi_{n+1} [t]; $$ $$ \mbox{ i.e., }  s_{i,j} = t_{i+1,j} \ \ \forall \ 1 \leq i \leq n, \ \ \forall \ 1 \leq j \leq n+1-i, \ \ \forall \ n \geq 1.$$
  In other words, the shift $\sigma$ applied to an infinite matrix $[t]$, is the infinite matrix obtained from $[t]$ after erasing its first column.

We endow $\Sigma$ with the topology generated by the cylinders, and consider the corresponding Borel sigma-algebra in $\Sigma$. (A cylinder is a set in $\Sigma$ obtained by fixing the terms in a finite number of fixed positions of the infinite matrix   $[t]$.)

Now, we relate the space $\Sigma$ with the  $\Lambda$-set of the atom doubling cascade.
 Denote  by ${\mathcal A}_n$ the family of atoms of generation $n$ of the cascade.
For each point $x \in \Lambda$, define its \em itinerary \em $$h(x) \defeq  [t] \in \Sigma \mbox{ iff } \ x \in \overline I_{\pi_n [t]} \in {\mathcal A}_n \ \ \ \forall \ n  \in {\mathbb{N}^+}, \ \ \mbox{ (recall that }   \pi_n[t] \in T_n).$$

It is standard to check that $h: \Lambda \mapsto \Sigma$ is an homeomorphism that conjugates $f|_{\lambda}$, with the shift  $\sigma: \Sigma \mapsto \Sigma$,
i.e.,
$$h \circ f|_{\Lambda}  = \sigma \circ h.$$
Besides, for any atom $A_{[t]_n} \in {\mathcal A}_n$, where $[t]_n$ is a fixed matrix in $T_n$, we have: $$h(A_{[t]_n} \cap \Lambda) = B_{[t]_n} \defeq \{[s] \in \Sigma: \ \ \pi_n [s] = [t]_n\}.$$
Therefore, $B_ {[t]_n}$ is a cylinder that fixes the terms in  $n(n+1)/2$ positions of the matrix $[s]$. We call such a particular cylinder \em an atom of generation $n$ \em of the space $\Sigma$.


For any cylinder  $C_k $ defined by fixing  $k$ elements of the matrix  $[t] \in \Sigma $, let
$\nu(C_k) \defeq \frac{1}{2^k}$.
In particular, for any atom $B_ {[t]_n}$ of generation $n$, $\nu (B_ {[t]_n}) = 2^{-n(n+1)/2}$.
This equality defines  a pre-measure on the family of cylinders. Since the family of all the cylinders generates the Borel sigma-algebra of the space $\Sigma$, the
pre-measure $\nu $ extends to a unique Borel measure $\nu$ on $\Sigma$. Besides, $\nu$ is
$\sigma$-invariant, since the   pre-measure $\sigma$-invariant. To see this consider a cylinder $C_k$, obtained by fixing   $k$ fixed elements of the matrix $[t] \in \Sigma$. Then, $ \sigma^{-1}(C_k) $ is also a cylinder, obtained by fixing  $k$ fixed elements  each matrix $[s] \in \sigma^{-1}(C_k)$: the positions of the elements  fixed in  a matrix  $[s] \in \sigma^{-1}(C_k)$, are obtained by shifting to the right the $k$ positions  fixed in a matrix $[t] \in C_k$. Therefore
$$\nu(\sigma^{-1} (C_k)) = \frac{1}{2^k} = \nu(C_k) \ \ \ \forall  \ \mbox{cylinder} \ C_k. $$

Consider the pull-back measure, for any Borel set $A \subset [0,1]$:
 $$\mu (A) \defeq ({h^{-1}}^* \nu)(A \cap \Lambda) \defeq \nu (h (A \cap \Lambda)).$$
 The measure $\mu$ is well defined on the Borel sigma-algebra of $[0,1]$ because $h^{-1}: \Sigma \mapsto \Lambda$ exists and is continuous, hence measurable.
 Besides, since $h$ is a conjugation, it is an isomorphism of the measure spaces. Hence $\mu$ is $f$-invariant. Also, $\mu$ is ergodic for $f$ if and only if $\nu$ is ergodic for $\sigma$. Besides, $h_{\mu}(f) =h_{\nu}(\sigma) $. Thus, to finish the proof of Theorem \ref{theoremMeasureForCascades}, it is  enough to prove that $\nu$ is $\sigma$-ergodic,
 and that
  $  h_{\nu}(\sigma) = + \infty$.

To prove ergodicity, it is enough to prove  strong mixing; this holds because for any pair of cylinders $C_h$ and $C_k$, there exists $n_0$ such that $\nu(\sigma^{-n} (C_h) \cap C_k) = 1/2^{h+k} = \nu(C_h) \cdot \nu (C_k)$   for all $n \geq n_0$.

Finally, let us prove that   $h_{\nu}(\sigma) = + \infty$. By the definition of the metric entropy, we have $h_{\nu}(\sigma) = \sup_{\mathcal P} h({\mathcal P}, \nu)$, where the supremum is taken on all the finite measurable partitions ${\mathcal P}$ of $\Sigma$. So, it is enough to prove that  for any  $k \in \N^+$, the partition ${\mathcal P}_k$, which is composed by all the different atoms $B_{[t]_k}$ of generation $k$ in $\Sigma$,  satisfies \begin{equation}
\label{eqntobeproved}
h({\mathcal P}_k, \nu) = k \log 2 \ \ \ \forall \ k \geq 1.\end{equation}
Let us recall the definition of the entropy $h({\mathcal P}, \nu)$ of a partition ${\mathcal P}$:
$$h({\mathcal P}, \nu) \defeq \lim_{n \rightarrow + \infty} \frac{ H({\mathcal P}^n, \nu)}{n}, \mbox{ where } $$
$${\mathcal P}^n \defeq  \bigvee_{j= 0}^{n-1} \sigma^{-j}({\mathcal P}), \ \ \ \  H({\mathcal P}^n, \nu) \defeq -\sum_{B \in {\mathcal P}^n} \nu(B) \ \log \nu(B).$$

Consider the partition  ${\mathcal P}_k \defeq \{B_{[t]_k} = h(A_{[t]_k}) \in {\mathcal A}_k, \ \ [t]_k \in T_k\}$  of $\Sigma$ into atoms of generation $k$.  We have $\# {\mathcal P}_k = 2^{k(k+1)/2} $, since each $B \in {\mathcal P}_k$ is a different cylinder obtained by fixing the \lq\lq first\rq\rq \ $k(k+1)/2$ terms of the infinite matrices of $\Sigma$; precisely, they have fixed the elements in the positions $(i,j)$ with $1 \leq i \leq k, \ \ 1 \leq j \leq k+1-i$. Thus, $$\sigma^{-1} ({\mathcal P}_k ): =  \{ \sigma^{-1} (B) \colon B \in{\mathcal P}_k \} =  \{ \sigma^{-1}(B_{[t]_k}) \colon [t]_k \in T_k\}   $$
is the union of cylinders, which are obtained by fixing $t_{i,j}$ with $1 \leq i \leq k$, $2 \leq j \leq k+2-i.$ So,
$${\mathcal P}_k \vee \sigma^{-1} ({\mathcal P}_k ) \defeq  \{C \cap \sigma^{-1} (B) \colon C, B \in{\mathcal P}_k \}  $$ $$  =\{ B_{[s]_k}\cap \sigma^{-1}(B_{[t]_k}) \colon [s]_k,  [t]_k \in T_k\} $$
is also the union of cylinders, which (when nonempty) are obtained by fixing  the terms of each matrix in the positions $(i,j), \ \ 1 \leq i \leq k, \ \ 1 \leq j \leq k+2 -i$. The sets $B \in {\mathcal P}_k \vee \sigma^{-1} ({\mathcal P}_k )  $ \em are not atoms, \em each piece $B$ of this partition is the union of two atoms of generation $k+1$.

By induction on $n  $ it is standard to deduce that, for all $n \geq 2$
$$ {\mathcal P}_k ^n \defeq {\mathcal P}_k \vee \sigma^{-1} ({\mathcal P}_k )   \vee   \sigma^{-2} ({\mathcal P}_k ) \vee \ldots \vee   \sigma^{-(n-1)} ({\mathcal P}_k )  $$
 is also the union of cylinders, which (when nonempty) are obtained by fixing by fixing the terms of each matrix in the positions $(i,j), \ \ 1 \leq i \leq k, \ \ 1 \leq j \leq k+n -i$.
Once again,  the sets $B \in {\mathcal P}_k ^n  $  \em are   not atoms. \em   In fact, they have fixed elements firs the first column up to row $k$, but they do not have  fixed element in the $j$th row for  $j \geq k+1$. They are indeed the union of   a  finite number of atoms of generation $k + n-1$.

Now let us compute $H ({\mathcal P}_k ^n, \nu)$. As seen above, each piece $B  \in {\mathcal P}_k^n$ is a different cylinder obtained by fixing the terms in exactly $kn + (k(k-1)/2)$ positions of the matrix. Therefore
$$ \# {\mathcal P}_k^n = 2^{kn + (k(k-1)/2)}  \ \ \mbox{ and }$$
$$ \nu(B)= 2^{-kn + (k(k-1)/2)} \ \  \forall \ B \in {\mathcal P}_k^n.$$
From the above equalities we deduce
$$ H ({\mathcal P}_k ^n, \nu) \defeq - \sum_{B \in   {\mathcal P}_k ^n} \nu (B) \log \nu(B) = \Big( kn + \frac{k(k-1)}{2}\Big)  \ \log 2. $$
Therefore
$$ h({\mathcal P_k}, \nu) \defeq \lim_{n \rightarrow + \infty} \frac{ H({\mathcal P}_k^n, \nu)}{n} =
  \! \!  \lim_{n \rightarrow + \infty} \frac{   kn   +  (k(k-1)/  2)}{  n} \ \log 2 = k \ \log 2,$$
  proving  (\ref{eqntobeproved}) as wanted. This ends the proof of Theorem \ref{theoremMeasureForCascades}.
\end{proof}

\begin{theorem} \label{theorempseudo-physicalInfiniteEntropy}
For a typical map $f \in \C$

\noindent (i)  there exists an atom cascade, and

\noindent (ii)  there exists an   invariant ergodic  pseudo-physical measure  with infinite entropy, supported on the   $\Lambda$-set of an atom doubling cascade.
\end{theorem}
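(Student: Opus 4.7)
The plan is to first establish part (i) — that a typical $f \in \C$ admits an atom doubling cascade — and then part (ii) follows immediately by combining Theorem \ref{theoremMeasureForCascades}, which for any such $f$ produces an ergodic invariant measure of infinite entropy supported on the $\Lambda$-set of a cascade, with Theorem \ref{TheoremSRB-l=ClosureErgodic}, which for typical $f$ asserts that every ergodic invariant measure is pseudo-physical.

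To prove (i), for each $n \geq 1$ I would define $\mathcal{D}_n \subset \C$ as the set of maps admitting families $\mathcal{A}_1, \dots, \mathcal{A}_n$ of pairwise disjoint closed subintervals of $[0,1]$ satisfying the conditions (\ref{eqn21})--(\ref{eqn24}) of Definition \ref{definitionCascade} up to generation $n$, with a quantitative $1/n$-margin in every strict image inclusion $\mathrm{int}(f(I_{[t]_k})) \supsetneq \overline{I}_{\sigma_{k-1}[t]_k}$, and with each atom of generation $n$ of length less than $1/n$. Openness of $\mathcal{D}_n$ is immediate: the defining condition is a finite system of strict open inclusions and length bounds, all of which persist under sufficiently small $C^0$-perturbations with the same witnessing atoms, exactly as argued in the proof of Theorem \ref{theorempseudo-physicalMeasEntropyLog_m}.

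For density, given $f \in \C$ and $\e > 0$ I would first $\e/2$-perturb $f$ on a short open subinterval $V \subset [0,1]$ to produce a continuous map $\tilde f$ that traps $\overline{V}$ into a slightly larger open interval $V'$, with $\tilde f(\overline V) \supset \overline V$ and $\tilde f(\overline V) \subset V'$. This is standard in the neighbourhood of any fixed point of $f$ (the existence of which follows from the intermediate value theorem). Example \ref{ExampleCascade}, applied to the triple $V \subset \overline V \subset V'$, then supplies a further $\e/2$-perturbation $g$ of $\tilde f$ whose restriction to $V$ exhibits a full atom doubling cascade, so $g \in \mathcal{D}_n$ for every $n$. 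Setting $\mathcal{D} \defeq \bigcap_{n \geq 1} \mathcal{D}_n$ yields a dense $G_\delta$ subset of $\C$. For any $f \in \mathcal{D}$ I would extract a genuine cascade by diagonalisation: closed subintervals of $[0,1]$ form a compact space under the Hausdorff metric, so from the witnessing families $\{\mathcal{A}_k^{(n)}\}_{1 \le k \le n}$ one can successively pass to subsequences on which, for every fixed $k$, the $\mathcal{A}_k^{(n)}$ Hausdorff-converge to a limit family $\mathcal{A}_k^\infty$; the built-in $1/n$-margins guarantee that the strict inclusions survive in the limit, so $\{\mathcal{A}_k^\infty\}_{k \ge 1}$ is a genuine atom doubling cascade for $f$.

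For (ii), intersect $\mathcal{D}$ with the typical set from Theorem \ref{TheoremSRB-l=ClosureErgodic} to obtain a typical $\mathcal{G} \subset \C$; for $f \in \mathcal{G}$ Theorem \ref{theoremMeasureForCascades} supplies an ergodic invariant $\mu$ with $h_\mu(f) = +\infty$ supported on the $\Lambda$-set of any fixed cascade, and its ergodicity together with typicality of $f$ gives $\mu \in \O_f$. The main obstacle in this plan is the diagonalisation step: the quantitative margins in the definition of $\mathcal{D}_n$ must be chosen with care so that openness, density via Example \ref{ExampleCascade}, and preservation of the strict inclusions under Hausdorff limits all hold simultaneously. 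Once this is handled, the rest is a routine assembly of earlier results.
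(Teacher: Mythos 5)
Your overall strategy is the same as the paper's: define a class $\mathcal D$ of maps with atom doubling cascades, show it is a dense $G_\delta$ (density by inserting the Example \ref{ExampleCascade} map near a fixed point, the $G_\delta$ property via openness of the finitely-many conditions truncated to generation $n$), and then combine with Theorem \ref{theoremMeasureForCascades} and Theorem \ref{TheoremSRB-l=ClosureErgodic} to get part (ii). That assembly is exactly what the paper does. Where you depart is in the $G_\delta$ step: the paper sets $G = \bigcap_N \bigcup_{f \in \mathcal D} U_N(f)$ and simply asserts ``by construction $G \subset \mathcal D$,'' which is not immediate because a map $g \in G$ receives, for each $N$, a truncated cascade inherited from a possibly different $f_N$, and nothing in the paper's argument forces these truncations to nest consistently, nor enforces the length-to-zero clause of Definition \ref{definitionCascade}. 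Your diagonalisation via Hausdorff compactness is a sensible and more honest way to close this, and you rightly flag it as the delicate point.

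Two concrete corrections to the margins you propose. First, a $1/n$-margin in the strict inclusions, applied uniformly to all generations $k \le n$ in the class $\mathcal D_n$, vanishes as $n \to \infty$, so the Hausdorff limit of the generation-$k$ families along your diagonal subsequence need only satisfy non-strict inclusions. The margin must be generation-dependent, say $2^{-k}$ for generation $k$ independently of the ambient $n$, so that the uniform clearance survives in the limit; the same remark applies to the pairwise disjointness of the atoms in a single generation, which you should also pad by a generation-dependent gap to keep the Hausdorff limits from touching or degenerating to points. Second, the length bound ``generation-$n$ atoms have length $< 1/n$'' in $\mathcal D_n$ constrains only the top generation, so for fixed $k$ the witnessing families $\mathcal A_k^{(n)}$ for $n > k$ carry no smallness constraint and their Hausdorff limit need not be small; you want instead to require length $< 1/k$ for generation-$k$ atoms for every $k \le n$. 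Both are compatible with the Example \ref{ExampleCascade} construction (its atom lengths shrink geometrically), so density is not endangered. With those adjustments your plan goes through; the remaining discrepancy (applying the Example to ``the triple $V \subset \overline V \subset V'$,'' where the paper uses four nested intervals $\overline J \subset I \subsetneq \overline I \subsetneq I'$) is notational and easily fixed by introducing the missing intermediate interval.
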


\begin{proof}
Assertion (ii) is a direct consequence of Assertion (i), Theorem \ref{theoremMeasureForCascades} and  Theorem \ref{TheoremSRB-l=ClosureErgodic}. To prove  Assertion (i), we   introduce the following notation.
We say that a map $f  \in \C$ \em belongs to the class ${\mathcal D} $ \em if it has an atom doubling cascade (see  Definition  \ref{definitionCascade}).

We claim that  the class ${\mathcal D} $ is a dense  $G_{\delta}$-set in $\C$.  First, let us see that ${\mathcal D} $ is a $G_{\delta}$ set in $\C$.
In fact,   ${\mathcal D} \neq \emptyset$ (see Example \ref{ExampleCascade}). So, we can take $f \in {\mathcal D}$. By the definition of cascade, there exists a sequence $\{{\mathcal A}_1(f), \ {\mathcal A}_2(f), \ \ldots, \ {\mathcal A}_n(f), \ \ldots\}$ of families of atoms of each generation $n \geq 1$ for $f$, satisfying the conditions of Definition \ref{definitionCascade}.

Fix  $N \geq 3$. Consider  the finite number of closed and pairwise disjoint intervals  in  $\bigcup_{n=1}^N {\mathcal A}_n(f)$ of the cascade for $f$. They satisfy conditions (\ref{eqn21})--(\ref{eqn24}) of the definition of cascade. These conditions are open. Precisely,  the \em same  finite family \em $\bigcup_{n=1}^N {\mathcal A}_n(f)$ of compact and pairwise disjoint intervals satisfy conditions (\ref{eqn21})--(\ref{eqn24}) for all $g \in \C$ close  enough to $f$, instead of $f$, say for all $g$ in the open neighborhood $U_N(f) \subset \C$. In brief, ${\mathcal A}_n(g) = {\mathcal A}_n(f) $ for all $g \in U_{N}(f)$.

Now, vary $N$  and  construct
 $G\defeq \bigcap_{N \geq 3, }^{+\infty} \bigcup_{f \in {\mathcal D}}U_{N}(f).$
Trivially $G \supset  {\mathcal D}$, and by construction $G \subset {\mathcal D}$. Therefore ${\mathcal D}$ is a $G_{\delta}$-set, as wanted.

Let us now prove that ${\mathcal D}$ is   a dense family   in $\C$.  Fix  $f \in \C$ and  $\e >0$. Let us construct $g \in {\mathcal D}$ such that $\mbox{dist}(g,f)< \e$.

Take $0<\delta <  \e/6 $ such that, if $\mbox{dist}(x,y) < \delta$, then $\mbox{dist}(f(x), f(y)) < \e/6$.  Consider a fixed point $x_0$ of $f$, and  the open interval $I \defeq (x_0- \delta, x_0 + \delta)  $. Construct  an open interval $J$, whose closure is contained in $I$, and a   map $g \in \C$ such that:

\noindent (i) $g(x) = f(x)$ if $x \not \in I;$

\noindent (ii) $g|_{\overline J}: \overline J \mapsto I' \defeq (x_0 - \e/6, \ \ x_0 + \e/6)$ is the map of   Example \ref{ExampleCascade}.

\noindent (iii) $g|_{\overline I \setminus J}$ is affine in each of the two connected components of $\overline I \setminus J$.
From  Condition (ii),  $g$ has an atom doubling cascade; namely $g \in {\mathcal D}$.

Let us check that $\mbox{dist}(g,f)< \e$. On the one hand, we have $|g(x)- f(x)|= 0$ if $x \not \in I$. On the other hand, since $f(x_0) = x_0$, if $ x \in \overline J$ then
\begin{equation*}
|g(x) - f(x)| < |g(x) - x_0| + |f(x_0) - f(x)| <\frac{\e}{6} + \frac{\e}{6} < \e
\end{equation*}

Finally, if $x \in \overline I \setminus J$, then there exists $y  \in \partial I$ and $z \in \partial J$  such that $x $ belongs to the interval with extremes $y,z$, and  $g$ is affine in this interval. Then, recalling that $g(y) = f(y)$ and that we have already proved that $|g(z) - f(z)|< \e/3$, we deduce:
\begin{equation*}
\begin{split}
|g(x) - f(x)| & \leq |g(x)- g(y) |+ |f(y) - f(x)|\\
& \leq |g(z) - g(y)| +|f(y) - f(x)|\\
&  \leq  |g(z) - f(z)| + |f(z) - g(y)| +|f(y) - f(x)|\\
&=   | g(z) - f(z)| + |f(z) - f(y)| +|f(y) - f(x)|\\
& <  \frac{\e}{3} + \frac{\e}{6} +\frac{\e}{6} < \e.
\end{split}
\end{equation*}
We have proved that for any map $f \in \C$ and any number $\e>0$, there exists a map $g \in {\mathcal D}$ such that $\mbox{dist}(f,g)< \e$. Hence the family ${\mathcal D}$ of maps is dense in $\C$, as wanted.
\end{proof}

\end{document}